\documentclass{amsart}
\usepackage{tikz, hyperref, amssymb}

\pagestyle{headings}

\newtheorem{thm}{Theorem}[section]
\newtheorem{cor}[thm]{Corollary}
\newtheorem{lem}[thm]{Lemma}
\newtheorem{prop}[thm]{Proposition}

\theoremstyle{definition}
\newtheorem{dfn}[thm]{Definition}

\newtheorem{ntn}[thm]{Notation}

\theoremstyle{remark}
\newtheorem{rmk}[thm]{Remark}

\newtheorem{example}[thm]{Example}

\newcommand{\extshrt}[3]{X_{#1}}
\newcommand{\Extshrt}[3]{\Xx_{#1}}

\numberwithin{equation}{section}

\newcommand{\CC}{\mathbb{C}}
\newcommand{\FF}{\mathbb{F}}

\newcommand{\NN}{\mathbb{N}}

\newcommand{\TT}{\mathbb{T}}
\newcommand{\ZZ}{\mathbb{Z}}

\newcommand{\Gg}{\mathcal{G}}

\newcommand{\Kk}{\mathcal{K}}

\newcommand{\Pp}{\mathcal{P}}
\newcommand{\Qq}{\mathcal{Q}}
\newcommand{\Uu}{\mathcal{U}}
\newcommand{\Vv}{\mathcal{V}}
\newcommand{\Ww}{\mathcal{W}}
\newcommand{\Xx}{\mathcal{X}}
\newcommand{\Yy}{\mathcal{Y}}
\newcommand{\Zz}{\mathcal{Z}}

\newcommand{\id}{\operatorname{id}}

\newcommand{\image}{\operatorname{Im}}
\newcommand{\Aut}{\operatorname{Aut}}

\newcommand{\coker}{\operatorname{coker}}
\newcommand{\Hom}{\operatorname{Hom}}
\newcommand{\Ext}{\operatorname{Ext}}
\newcommand{\MCE}{\operatorname{MCE}}
\newcommand{\Obj}{\operatorname{Obj}}
\newcommand{\lsp}{\operatorname{span}}
\newcommand{\clsp}{\overline{\lsp}}
\newcommand{\dom}{\operatorname{dom}}
\newcommand{\cod}{\operatorname{cod}}

\newcommand{\Bcub}[1]{B^{#1}}
\newcommand{\Ccub}[1]{C^{#1}}
\newcommand{\Hcub}[1]{H^{#1}}
\newcommand{\Zcub}[1]{Z^{#1}}
\newcommand{\dcub}[1]{\delta^{#1}}
\newcommand{\Bcat}[1]{\underline{B}^{#1}}
\newcommand{\Ccat}[1]{\underline{C}^{#1}}
\newcommand{\Hcat}[1]{\underline{H}^{#1}}
\newcommand{\Zcat}[1]{\underline{Z}^{#1}}
\newcommand{\dcat}[1]{\underline{\delta}^{#1}}
\newcommand{\Bgpd}[1]{\tilde{B}^{#1}}
\newcommand{\Cgpd}[1]{\tilde{C}^{#1}}
\newcommand{\Hgpd}[1]{\tilde{H}^{#1}}
\newcommand{\Zgpd}[1]{\tilde{Z}^{#1}}
\newcommand{\dgpd}[1]{\tilde{\delta}^{#1}}

\title[Twisted $k$-graph algebras]{On twisted higher-rank graph $C^*$-algebras}

\author{Alex Kumjian}
\address{Alex Kumjian\\ Department of Mathematics (084)\\ University
of Nevada\\ Reno NV 89557-0084\\ USA} \email{alex@unr.edu}

\author{David Pask}
\address{David Pask, Aidan Sims\\ School of Mathematics and
Applied Statistics  \\
University of Wollongong\\
NSW  2522\\
AUSTRALIA} \email{dpask@uow.edu.au, asims@uow.edu.au}
\author{Aidan Sims}

\thanks{This research was supported by the ARC. Part of the work was completed while the first author was 
employed at the University of Wollongong on the ARC grants DP0984339 and DP0984360.}

\subjclass[2010]{Primary 46L05; Secondary 18G60, 55N10}

\keywords{Higher-rank graph; $C^*$-algebra; cohomology; groupoid}

\date{\today}

\dedicatory{Dedicated to Marc A. Rieffel on the occasion of his 75th birthday}

\begin{document}

\begin{abstract}
We define the categorical cohomology of a $k$-graph $\Lambda$ and show that the first three terms in this cohomology are isomorphic to the corresponding terms in the cohomology defined in our previous paper. This leads to an alternative characterisation of the twisted $k$-graph $C^*$-algebras introduced there. We prove a gauge-invariant uniqueness theorem and use it to show that every twisted $k$-graph $C^*$-algebra is isomorphic to a twisted groupoid $C^*$-algebra. We deduce criteria for simplicity, prove a Cuntz-Krieger uniqueness theorem and establish that all twisted $k$-graph $C^*$-algebras are nuclear and belong to the bootstrap class.
\end{abstract}

\maketitle

\section{Introduction}

Higher-rank graphs, or $k$-graphs, are $k$-dimensional analogues of directed graphs which were
introduced by the first two authors \cite{KP2000} to provide combinatorial models for the
higher-rank Cuntz-Krieger algebras investigated by Robertson and Steger in \cite{RS1999a}. The
structure theory of $k$-graph $C^*$-algebras is becoming quite well understood
\cite{DavidsonYang:NYJM09, Evans2008, FarthingMuhlyEtAl:sf05, KangPask:xx11, RobertsonSims:blms07},
and the class of $k$-graph algebras has been shown to contain many interesting examples \cite{kps,
PRRS2006}.

In \cite{kps3} we introduced a homology theory $H_* ( \Lambda )$ for each $k$-graph $\Lambda$ and
the corresponding cohomology $H^* ( \Lambda , A )$ with coefficients in an abelian group $A$. We
proved a number of fundamental results providing tools for calculating homology, and showed that
the homology of a $k$-graph is naturally isomorphic to that of its topological realisation. Of most
interest to us was to show how, given a $k$-graph and a $\TT$-valued $2$-cocycle $\phi$, one may
construct a twisted $k$-graph $C^*$-algebra $C^*_\phi(\Lambda)$. Up to isomorphism,
$C^*_\phi(\Lambda)$ only depends on the cohomology class of $\phi$.  Examples of this construction
include all noncommutative tori, and also the Heegaard-type quantum 3-spheres of
\cite{BaumHajacEtAl:K-th05}.

The purpose of this paper is to begin to analyse the structure of twisted $k$-graph $C^*$-algebras.
In particular, we provide a groupoid model for twisted $k$-graph $C^*$-algebras, and establish
versions of the standard uniqueness theorems. The path groupoid of a $k$-graph was the basis for
the description of $k$-graph $C^*$-algebras in \cite{KP2000}, and many key theorems about $k$-graph
$C^*$-algebras follow from this description and Renault's structure theory for groupoid
$C^*$-algebras \cite{Renault1980}. We therefore set out to show that each twisted $k$-graph
$C^*$-algebra is also isomorphic to the twisted groupoid $C^*$-algebra $C^*(\Gg_\Lambda, \sigma)$
associated to the path groupoid $\Gg_\Lambda$ and an appropriate continuous $\TT$-valued
$2$-cocycle on $\Gg_\Lambda$. It is not immediately clear how to manufacture a groupoid cocycle
from a $k$-graph cocycle. Part of the difficulty lies in that continuous groupoid cocycle
cohomology is based on the simplicial structure of groupoids while the $k$-graph cohomology of
\cite{kps3} is based on the cubical structure of $k$-graphs.

To solve this difficulty we introduce another cohomology theory $\Hcat*(\Lambda , A)$ for
$k$-graphs, defined by analogy with continuous groupoid cocycle cohomology using the simplicial
structure of the $k$-graph as a small category.  We call this the categorical cohomology of
$\Lambda$ (it is no doubt closely related to the standard notion of the cohomology of a small
category, see \cite{BW}), and refer to the theory developed in \cite{kps3} simply as the cohomology
of $\Lambda$ or, if we wish to emphasise the distinction between the two theories, as the cubical
cohomology of $\Lambda$. It is relatively straightforward to see (Remark~\ref{rmk:0-hom} and
Theorem~\ref{thm:cocycles}) that the cohomology groups $\Hcub0(\Lambda,A)$ and $\Hcub1(\Lambda,A)$
of \cite{kps3} are isomorphic to the corresponding categorical cohomology groups
$\Hcat0(\Lambda,A)$ and $\Hcat1(\Lambda,A)$.

Of most interest to us, because of its role in the definition of twisted $C^*$-algebras, is second
cohomology. We show in Theorem~\ref{thm:2-cocycle map} and Theorem~~\ref{thm:2-cohomology iso} that
there is a map between (cubical) $2$-cocycles and categorical $2$-cocycles on a $k$-graph $\Lambda$
that induces an isomorphism $\Hcub2(\Lambda, A) \cong \Hcat2(\Lambda, A)$. However, this result
requires substantially more argument than those discussed in the preceding paragraph. The proof
occupies the greater part of Section~\ref{sec:cohomology} and all of Section~\ref{sec:extensions}.
Our approach is inspired by the classification of central extensions of groups  by second
cohomology (see \cite[\S{IV.3}]{Brown}). We first construct by hand a map $\phi \mapsto c_\phi$
from cubical cocycles to categorical cocycles which determines a homomorphism $\psi : \Hcub2 (
\Lambda , A ) \to \Hcat2 ( \Lambda , A)$. We then define the notion of a central extension of a
$k$-graph by an abelian group, and show that each categorical $A$-valued $2$-cocycle $c$ on
$\Lambda$ determines a central extension $\Xx_c$ of $\Lambda$ by $A$. We show that isomorphism
classes of central extensions of $\Lambda$ by $A$ form a group $\Ext(\Lambda,A)$, and that the
assignment $c \mapsto \Xx_c$ determines an isomorphism $\Hcat{2}(\Lambda,A ) \cong \Ext(\Lambda,
A)$ (cf. \cite[Theorem~2.3]{BW} and \cite[Proposition~I.1.14]{Renault1980}). We show that for $c
\in \Zcat2 ( \Lambda , A )$ there is a section $\sigma : \Lambda \to \Xx_c$ which gives rise to a
cubical cocycle $\phi_c$ such that $[ c_{\phi_c} ] = [c]$ and $[ \phi_{c_\phi} ] = [ \phi ]$. This
shows that $\psi: \Hcub2 ( \Lambda , A ) \to \Hcat2 ( \Lambda , A)$ is an isomorphism.

It is, of course, natural to ask whether $\Hcub{n} ( \Lambda, A ) \cong \Hcat{n} ( \Lambda , A)$
for all $n$. We suspect this is so, but have not found a proof as yet, and the methods we use to
prove isomorphism of the first three cohomology groups do not seem likely to extend readily to a
general proof. In any case, we expect that the central extensions of $k$-graphs introduced here are
of interest in their own right. For example, we believe that extensions of $k$-graphs can be used
to adapt Elliott's argument \cite[proof of Theorem 2.2]{Elliott84} --- which shows that the
$K$-groups of a noncommutative torus are isomorphic to those of the corresponding classical torus
--- to show that the $K$-groups of a twisted $k$-graph $C^*$-algebra are identical to those of the
untwisted algebra whenever the twisting cocycle is obtained from exponentiation of a real-valued
cocycle.

In the second half of the paper we turn to the relationship between categorical cohomology and
twisted $C^*$-algebras of $k$-graphs. We define the twisted $C^*$-algebra $C^*(\Lambda, c)$
associated to a categorical $\TT$-valued $2$-cocycle $c$ on a row-finite $k$-graph $\Lambda$ with
no sources, and show that $C^*_\phi(\Lambda) \cong C^*(\Lambda, c_\phi)$ for each cubical
$\TT$-valued $2$-cocycle $\phi$. The advantage of the description of twisted $k$-graph
$C^*$-algebras in terms of categorical cocycles is that it closely mirrors the usual definition of
the $C^*$-algebra of a $k$-graph. This allows us to commence a study of the structure theory of
twisted $k$-graph $C^*$-algebras.  We prove that there is map $c \mapsto \sigma_c$ which induces a
homomorphism from the second categorical cohomology of a $k$-graph to the second continuous
cohomology of the associated path groupoid.  We then prove that for a categorical $\TT$-valued
$2$-cocycle $c$ on $\Lambda$, there is a homomorphism from the twisted $k$-graph $C^*$-algebra
associated to $c$ to Renault's twisted groupoid $C^*$-algebra $C^*(\Gg_\Lambda, \sigma_c)$; this
shows in particular, that all the generators of every twisted $k$-graph $C^*$-algebra are nonzero.
We then prove a version of an Huef and Raeburn's gauge-invariant uniqueness theorem for twisted
$k$-graph $C^*$-algebras, and use it to prove that $C^* ( \Lambda , c ) \cong C^* ( \Gg_\Lambda ,
\sigma_c)$.

We finish up in Section~\ref{sec:structure} by using the results of the previous sections to
establish some fundamental structure results. We use the realisation of each twisted $k$-graph
$C^*$-algebra as a twisted groupoid $C^*$-algebra, together with Renault's theory of groupoid
$C^*$-algebras \cite{Renault1980} to prove a version of the Cuntz-Krieger uniqueness theorem. We
also indicate how groupoid technology applies to describe twisted $C^*$-algebras of pullback and
cartesian-product $k$-graphs, and to show that every twisted $k$-graph $C^*$-algebra is nuclear and
belongs to the bootstrap class $\mathcal{N}$.

\subsection*{Acknowledgement.}
We thank the anonymous referee for a careful reading of the paper.
The first author thanks his coauthors for their hospitality.

\section{Preliminaries}

\subsection{Higher-rank graphs}
We adopt the conventions of \cite{kps2, pqr1} for $k$-graphs. Given a nonnegative integer $k$, a
\emph{$k$-graph} is a nonempty countable small category $\Lambda$ equipped with a functor $d
:\Lambda \to \NN^k$ satisfying the \emph{factorisation property}: for all $\lambda \in \Lambda$ and
$m,n \in \NN^k$ such that $d( \lambda )=m+n$ there exist unique $\mu ,\nu \in \Lambda$ such that
$d(\mu)=m$, $d(\nu)=n$, and $\lambda=\mu \nu$. When $d(\lambda )=n$ we say $\lambda$ has
\emph{degree} $n$. We will typically use $d$ to denote the degree functor in any $k$-graph in this
paper.

For $k \ge 1$, the standard generators of $\NN^k$ are denoted $e_1, \dots, e_k$, and for $n \in
\NN^k$ and $1 \le i \le k$ we write $n_i$ for the $i^{\rm th}$ coordinate of $n$. For $n = (n_1 ,
\ldots , n_k ) \in \NN^k$ let $| n | := \sum_{i=1}^k n_i$;  for $\lambda \in \Lambda$ we define $|
\lambda | := | d ( \lambda ) |$. For $m,n \in \NN^k$, we write $m \le n$ if $m_i \le n_i$ for all
$i \le k$, and we write $m \vee n$ for the coordinatewise maximum of $m$ and $n$.

For $n \in \NN^k$, we write $\Lambda^n$ for $d^{-1}(n)$. The \emph{vertices} of $\Lambda$ are the
elements of $\Lambda^0$. The factorisation property implies that $o \mapsto \id_o$ is a bijection
from the objects of $\Lambda$ to $\Lambda^0$. We will use this bijection to identify
$\Obj(\Lambda)$ with $\Lambda^0$ without further comment. The domain and codomain maps in the
category $\Lambda$ then become maps $s,r : \Lambda \to \Lambda^0$. More precisely, for $\alpha
\in\Lambda$, the \emph{source} $s(\alpha)$ is the identity morphism associated with the object
$\dom(\alpha)$ and similarly, $r(\alpha) = \id_{\cod(\alpha)}$. An \textit{edge} is a morphism $f$
with $d(f) = e_i$ for some $i \in \{1, \ldots , k\}$.

Let $\lambda$ be an element of a $k$-graph $\Lambda$ and suppose $m,n \in \NN^k$ satisfy $0 \le m
\le n \le d(\lambda)$. By the factorisation property there exist unique elements $\alpha, \beta,
\gamma \in \Lambda$ such that
\[
\lambda = \alpha\beta\gamma, \quad d(\alpha) = m,
\quad d(\beta) = n - m,\quad\text{and}\quad d(\gamma) = d(\lambda) - n.
\]
We define $\lambda(m,n) := \beta$. Observe that $\alpha = \lambda(0, m)$ and $\gamma = \lambda(n,
d(\lambda))$.

For $\alpha,\beta\in\Lambda$ and $E \subset \Lambda$, we write $\alpha E$ for $\{\alpha\lambda :
\lambda \in E, r(\lambda) = s(\alpha)\}$ and $E\beta$ for $\{\lambda\beta : \lambda \in E,
s(\lambda) = r(\beta)\}$. So for $u,v \in \Lambda^0$, we have $uE = E \cap r^{-1}(u)$ and $E v = E
\cap s^{-1}(v)$.

Recall from \cite{RSY2003} that for $\mu,\nu \in \Lambda$, the set $\mu\Lambda \cap \nu\Lambda \cap
\Lambda^{d(\mu) \vee d(\nu)}$ of minimal common extensions of $\mu$ and $\nu$ is denoted
$\MCE(\mu,\nu)$.

We allow $0$-graphs with the convention that $\NN^0 = \{0\}$. A $0$-graph consists only of identity
morphisms, and we regard it as a countable nonempty collection of isolated vertices.

If $E = (E^0, E^1, r, s)$ is a directed graph as in \cite{KPRR1997}, then its
path category is a $1$-graph, and conversely, every $1$-graph $\Lambda$ is the path category of the
directed graph with vertices $\Lambda^0$, edges $\Lambda^1$ and range and source maps inherited
from $\Lambda$.  In this paper we shall treat directed graphs and $1$-graphs interchangeably. That
is, if $E$ is a directed graph $(E^0, E^1, r, s)$, then we shall also use $E$ to denote its path
category regarded as a $1$-graph.

\subsection{Cohomology of \texorpdfstring{$k$}{k}-graphs}

We recall the (cubical) cohomology of a $k$-graph described in \cite{kps3}.
For $k \ge 0$ define
\[
\mathbf{1}_k :=
\begin{cases}
(1, \dots, 1) & \text{if  } k > 0, \\
0 & \text{if  } k = 0.
\end{cases}
\]

\noindent
Let $\Lambda$ be a $k$-graph. For $0 \le r \le k$ let
\[
Q_r ( \Lambda ) := \{ \lambda \in \Lambda : d ( \lambda ) \le \mathbf{1}_k , |\lambda| = r \}.
\]
For $r > k$ let $Q_r ( \Lambda ) := \emptyset$.

Fix  $0 < r \le k$. The set $Q_r(\Lambda)$ consists of the morphisms of $\Lambda$ which may be
expressed as the composition of a sequence of $r$ edges whose degrees are distinct generators of
$\NN^k$. The factorisation property implies that each element of $Q_r ( \Lambda )$ determines a
commuting diagram in $\Lambda$ shaped like an $r$-cube. For example if $\lambda \in Q_3 ( \Lambda
)$ with $d(\lambda) = e_i + e_j + e_l$ with $i < j < l$, then multiple applications of the
factorisation property yield factorisations
\[
\lambda = f_0 g_0 h_0 = f_0 h_1 g_1 = h_2 f_1 g_1 = h_2 g_2 f_2 = g_3 h_3 f_2 = g_3 f_3 h_0
\]
such that $d(f_n) = e_i$, $d(g_n) = e_j$ and $d(h_n) = e_l$ for all $n$. So $\lambda$ determines
the following commuting diagram in which edges of degree $e_i$ are blue and solid, edges of degree
$e_j$ are red and dashed and edges of degree $e_l$ are green and dotted:
\begin{equation} \label{eq:commutingcube}
\begin{array}{lcr}
&\begin{tikzpicture}[scale=2.5]
    \node[circle,inner sep=1.5pt,fill=black] (000) at (0,0,0) {};
    \node[circle,inner sep=1.5pt,fill=black] (001) at (0,0,1) {};
    \node[circle,inner sep=1.5pt,fill=black] (010) at (0,1,0) {};
    \node[circle,inner sep=1.5pt,fill=black] (011) at (0,1,1) {};
    \node[circle,inner sep=1.5pt,fill=black] (100) at (1,0,0) {};
    \node[circle,inner sep=1.5pt,fill=black] (101) at (1,0,1) {};
    \node[circle,inner sep=1.5pt,fill=black] (110) at (1,1,0) {};
    \node[circle,inner sep=1.5pt,fill=black] (111) at (1,1,1) {};
    \draw[thick, blue, -stealth, preaction={draw,line width=1.6pt,white,-stealth}] (100)--(000)
        node[below, pos=0.6, inner sep=0.5pt, black] {\small$f_1$};
    \draw[thick, blue, -stealth, preaction={draw,line width=1.6pt,white,-stealth}] (110)--(010)
        node[above, pos=0.5, inner sep=0.5pt, black] {\small$f_2$};
    \draw[thick, red, dashed, -stealth, preaction={draw,line width=1.6pt,white,-stealth}] (010)--(000)
        node[left, pos=0.6, inner sep=0.5pt, black] {\small$g_2$};
    \draw[thick, red, dashed, -stealth, preaction={draw,line width=1.6pt,white,-stealth}] (110)--(100)
        node[left, pos=0.5, inner sep=0.5pt, black] {\small$g_1$};
    \draw[thick, green!50!black, dotted, -stealth, preaction={draw,line width=1.6pt,white,-stealth}] (000)--(001)
        node[anchor=south east, pos=0.5, inner sep=0.5pt, black] {\small$h_2$};
    \draw[thick, green!50!black, dotted, -stealth, preaction={draw,line width=1.6pt,white,-stealth}] (100)--(101)
        node[anchor=south east, pos=0.5, inner sep=0.5pt, black] {\small$h_1$};
    \draw[thick, green!50!black, dotted, -stealth, preaction={draw,line width=1.6pt,white,-stealth}] (010)--(011)
        node[anchor=south east, pos=0.5, inner sep=0.5pt, black] {\small$h_3$};
    \draw[thick, green!50!black, dotted, -stealth, preaction={draw,line width=1.6pt,white,-stealth}] (110)--(111)
        node[anchor=south east, pos=0.5, inner sep=0.5pt, black] {\small$h_0$};
    \draw[thick, blue, -stealth, preaction={draw,line width=1.6pt,white,-stealth}] (101)--(001)
        node[below, pos=0.5, inner sep=0.5pt, black] {\small$f_0$};
    \draw[thick, blue, -stealth, preaction={draw,line width=1.6pt,white,-stealth}] (111)--(011)
        node[above, pos=0.4, inner sep=0.5pt, black] {\small$f_3$};
    \draw[thick, red, dashed, -stealth, preaction={draw,line width=1.6pt,white,-stealth}] (011)--(001)
        node[left, pos=0.5, inner sep=0.5pt, black] {\small$g_3$};
    \draw[thick, red, dashed, -stealth, preaction={draw,line width=1.6pt,white,-stealth}] (111)--(101)
        node[left, pos=0.4, inner sep=0.5pt, black] {\small$g_0$};
\end{tikzpicture}&
\end{array}
\end{equation}

Each $\lambda \in Q_r(\Lambda)$ determines $2r$ elements of $Q_{r-1}(\Lambda)$ which we regard as
faces of $\lambda$. Fix $\lambda \in Q_r(\Lambda)$ and express $d(\lambda) = e_{i_1} + \cdots +
e_{i_r}$ where $i_1 < \cdots < i_r$. For $1 \le j \le r$ define $F_j^0(\lambda)$ and
$F_j^1(\lambda)$ to be the unique elements of $\Lambda^{d(\lambda) - e_{i_j}}$ such that $\lambda =
\alpha F_j^1(\lambda) = F_j^0(\lambda)\beta$ for some $\alpha,\beta \in \Lambda^{e_{i_j}}$.
Equivalently,
\[
F_j^0(\lambda) = \lambda(0, d(\lambda) - e_{i_j}) \quad\text{ and }\quad
F_j^1(\lambda) = \lambda(e_{i_j}, d(\lambda)).
\]
In example \eqref{eq:commutingcube}, $F^1_1(\lambda) = g_0h_0 = h_1g_1$, $F^0_2(\lambda) = f_0 h_1
= h_2 f_1$ and so on.

For $r \in \NN$ let $C_r(\Lambda) = \ZZ Q_r(\Lambda)$. For $r \ge 1$, define $\partial_r :
C_r(\Lambda) \to C_{r-1} ( \Lambda )$ to be the unique homomorphism such that
\[
\partial_r( \lambda )
    = \sum_{i=1}^r \sum_{\ell=0}^1 (-1)^{i+\ell}  F_i^\ell ( \lambda ) \quad\text{ for all $\lambda \in Q_r(\Lambda)$.}
\]
We write $\partial_0$ for the zero homomorphism $C_0 ( \Lambda ) \to \{0\}$. By
\cite[Lemma~3.3]{kps3} $( C_* ( \Lambda ) , \partial_* )$ is a chain complex.

As in \cite{kps3}, for $r \in \NN$ we denote the group $H_r(\Lambda) =
\ker(\partial_r) / \image(\partial_{r+1})$ by $H_r(\Lambda)$. We call $H_r(\Lambda)$ the 
$r^\text{th}$ homology group
of $\Lambda$.

Recall that a morphism $\phi : \Lambda \to \Gamma$ of $k$-graphs is a functor $\phi : \Lambda \to
\Gamma$ such that $d_\Gamma(\phi(\lambda)) = d_\Lambda(\lambda)$ for all $\lambda \in \Lambda$. By \cite[Lemma 3.5]{kps3} the assignment $\Lambda \mapsto H_*( \Lambda )$ is a covariant functor
from the category of $k$-graphs with $k$-graph morphisms to the category of abelian groups with
homomorphisms.

\begin{ntn}
Let $\Lambda$ be a $k$-graph and let $A$ be an abelian group. For $r \in \NN$, we write
$\Ccub{r}(\Lambda, A)$ for the collection of all functions $f : Q_r(\Lambda) \to A$. We identify
$\Ccub{r}(\Lambda, A)$ with $\Hom(C_r(\Lambda), A)$ in the usual way. Define maps $\dcub{r} :
\Ccub{r}(\Lambda,A) \to \Ccub{r+1}(\Lambda,A)$ by
\[
(\dcub{r} f)(\lambda) := f(\partial_{r+1}(\lambda)) =
\sum^{r+1}_{i=1} \sum^1_{\ell=0} (-1)^{i+\ell} f(F_i^\ell(\lambda)).
\]
Then $( \Ccub{*}(\Lambda, A), \dcub{*})$ is a cochain complex.
\end{ntn}

As in \cite{kps3}, we define the \emph{cohomology} $\Hcub*(\Lambda, A)$ of the $k$-graph $\Lambda$
with coefficients in $A$ to be the cohomology of the complex $\Ccub*(\Lambda,A)$; that is
$\Hcub{r}(\Lambda,A) := \ker(\dcub{r})/\image(\dcub{r-1})$. For $r \ge 0$, we write
$\Zcub{r}(\Lambda,A) := \ker(\dcub{r})$ for the group of $r$-cocycles, and for $r > 0$, we write
$\Bcub{r}(\Lambda,A) = \image(\dcub{r-1})$ for the group of $r$-coboundaries. We define
$\Bcub0(\Lambda, A) := \{0\}$. For each $r$, $\Hcub{r}(\Lambda,A)$ is a bifunctor, which is
contravariant in $\Lambda$ and covariant in $A$.

\begin{rmk}
As mentioned in the introduction, in the next section we introduce a new cohomology theory, called
``categorical cohomology" for $k$-graphs. When we wish to emphasise the distinction between the
two, we will refer to the version discussed here as ``cubical cohomology".
\end{rmk}

\section{Categorical cohomology}\label{sec:cohomology}

Here we introduce a second notion of cohomology for $k$-graphs,  obtained from the simplicial
structure of the category $\Lambda$ in a manner analogous to Renault's cohomology for groupoids
(see \cite[Definition I.1.11]{Renault1980}), which he attributes to Westman (see~\cite{West}). We
also follow his use of normalised cochains.

\begin{ntn}
Let $\Lambda$ be a $k$-graph, and let $A$ be an abelian group. For each integer $r \ge 1$, let
$\Lambda^{*r} := \big\{(\lambda_1, \dots, \lambda_r) \in \prod^r_{i=1}\Lambda : s(\lambda_i) =
r(\lambda_{i+1})\text{ for each }i\big\}$ be the collection of composable $r$-tuples in $\Lambda$,
and let $\Lambda^{*0} := \Lambda^0$. For $r \ge 0$, a function $f : \Lambda^{*r} \to A$ is said to
be an \emph{$r$-cochain} if $f(\lambda_1, \dots, \lambda_r) = 0$ whenever $\lambda_i \in \Lambda^0$
for some $0 < i \le r$. Observe that when $r = 0$ the cochain condition is vacuous, so every function
$f : \Lambda^0 \to A$ is a $0$-cochain. Let $\Ccat{r}(\Lambda,A)$ be the set of all $r$-cochains,
regarded as a group under pointwise addition.
\end{ntn}

\begin{dfn}\label{dfn:coboundary maps}
Fix $r \ge 1$. For $f \in \Ccat{r}(\Lambda, A)$ define $\dcat{r}f : \Lambda^{*(r+1)} \to A$ by
\begin{equation}\label{eq:dcat def}\begin{split}
(\dcat{r} f)(\lambda_0, \dots, \lambda_r)
    &= f(\lambda_1, \dots, \lambda_r) \\
    &\textstyle\qquad{} + \sum^r_{i=1} (-1)^i
        f\big(\lambda_0,\dots,\lambda_{i-2},(\lambda_{i-1}\lambda_i),\lambda_{i+1},\dots,\lambda_r\big)\\
    &\qquad\qquad{} + (-1)^{r+1} f(\lambda_0, \dots, \lambda_{r-1}).
\end{split}\end{equation}
For $f \in \Ccat0(\Lambda, A)$, define $\dcat0 f : \Lambda^{*1} \to A$ by
\begin{equation}\label{eq:dcat0 def}
(\dcat0 f)(\lambda) := f(s(\lambda)) - f(r(\lambda)).
\end{equation}
\end{dfn}

\begin{rmk}
It is routine to check that each $\dcat{r}$ maps $\Ccat{r}(\Lambda, A)$ to
$\Ccat{r+1}(\Lambda, A)$.
\end{rmk}

We sometimes emphasise the condition that $f(\lambda_1, \dots, \lambda_r) = 0$ whenever $\lambda_i
\in \Lambda^0$ for some $i$ by referring to such cochains as \emph{normalised} cochains. However,
since we will not consider any other sort of cochain in this paper, we usually eschew the
adjective.

\begin{lem}
The sequence
\[
0 \to \Ccat0(\Lambda,A)
    \stackrel{\dcat0}{\longrightarrow} \Ccat1(\Lambda,A)
    \stackrel{\dcat1}{\longrightarrow} \Ccat2(\Lambda,A)
    \stackrel{\dcat2}{\longrightarrow} \dots
\]
is a cochain complex.
\end{lem}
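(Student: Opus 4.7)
The plan is to check $\dcat{r+1} \circ \dcat{r} = 0$ for every $r \ge 0$; that is the only content of the lemma. For $r \ge 1$ I would rewrite \eqref{eq:dcat def} as an alternating sum over simplicial face maps: introduce $\partial_i : \Lambda^{*(r+1)} \to \Lambda^{*r}$, $0 \le i \le r+1$, by
\begin{align*}
\partial_0(\lambda_0,\ldots,\lambda_r) &= (\lambda_1,\ldots,\lambda_r),\\
\partial_i(\lambda_0,\ldots,\lambda_r) &= (\lambda_0,\ldots,\lambda_{i-2},\lambda_{i-1}\lambda_i,\lambda_{i+1},\ldots,\lambda_r) \quad (1 \le i \le r),\\
\partial_{r+1}(\lambda_0,\ldots,\lambda_r) &= (\lambda_0,\ldots,\lambda_{r-1}),
\end{align*}
so that \eqref{eq:dcat def} reads $\dcat{r}f = \sum_{i=0}^{r+1}(-1)^i (f\circ\partial_i)$. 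This identifies $(\Ccat{*}(\Lambda,A),\dcat{*})$ as (a subcomplex of) the simplicial cochain complex of the nerve of $\Lambda$ viewed as a small category.

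The crux is the simplicial identity $\partial_i\partial_j = \partial_{j-1}\partial_i$ on $\Lambda^{*(r+2)}$, valid for $0 \le i<j \le r+2$. I would carry out a case analysis on whether each of $i,j$ is a boundary index ($0$ or $r+2$) or an interior index that composes two adjacent entries. Every case is pure index-bookkeeping except the one where both are interior and $j = i+1$: there the identity reduces to associativity $(\lambda_{i-1}\lambda_i)\lambda_{i+1} = \lambda_{i-1}(\lambda_i\lambda_{i+1})$ in $\Lambda$, which is available because $\Lambda$ is a category.

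Given those identities, the standard computation finishes the proof: for $\mu \in \Lambda^{*(r+2)}$,
\begin{equation*}
(\dcat{r+1}\dcat{r}f)(\mu) = \sum_{j=0}^{r+2}\sum_{i=0}^{r+1}(-1)^{i+j} f(\partial_i\partial_j\mu);
\end{equation*}
split the sum into the parts $i<j$ and $i \ge j$, apply $\partial_i\partial_j = \partial_{j-1}\partial_i$ in the first piece, and re-index to cancel termwise against the second.

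For the base case $r = 0$, where $\dcat0$ is given by \eqref{eq:dcat0 def} rather than by the general pattern, I would just compute directly: for composable $(\mu,\nu) \in \Lambda^{*2}$, using $s(\mu\nu)=s(\nu)$, $r(\mu\nu)=r(\mu)$, and the composability relation $s(\mu) = r(\nu)$, the six terms in $(\dcat1\dcat0 f)(\mu,\nu) = (\dcat0 f)(\nu) - (\dcat0 f)(\mu\nu) + (\dcat0 f)(\mu)$ cancel on the nose. The main potential stumbling block is not conceptual but clerical: keeping the case-split for the simplicial identities straight at the interfaces between the interior range $1 \le i \le r$ and the two boundary face maps, and being careful that the sign convention in \eqref{eq:dcat0 def} is compatible with the scheme used for $r \ge 1$.
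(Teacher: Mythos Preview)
Your proposal is correct and follows essentially the same approach as the paper: both verify $\dcat1\circ\dcat0 = 0$ by direct computation and handle $r \ge 1$ via the standard pairwise cancellation of terms in the double sum. You package this cancellation using the face maps $\partial_i$ and the simplicial identities $\partial_i\partial_j = \partial_{j-1}\partial_i$, whereas the paper describes the same cancellations in words (matching terms of \eqref{eq:firstterm} and \eqref{eq:lastterm} against boundary terms of the summands in \eqref{eq:midsum}, and pairing the remaining interior terms); the content is identical.
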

\begin{proof}
For $f \in \Ccat0(\Lambda,A)$ and $(\lambda_1,\lambda_2) \in \Lambda^{*2}$, we have
\begin{align*}
    (\dcat1\circ\dcat0 f)&(\lambda_1,\lambda_2)\\
        &= (\dcat0 f)(\lambda_1) - (\dcat0 f)(\lambda_1\lambda_2) + (\dcat0 f)(\lambda_2)\\
        &= f(s(\lambda_1)) - f(r(\lambda_1)) - (f(s(\lambda_1)) - f(r(\lambda_2))) + f(s(\lambda_2)) - f(r(\lambda_2)) \\
        &= 0,
\end{align*}
so $\dcat1 \circ \dcat0 = 0$.

To see that $\dcat{i+1}\circ\dcat{i} = 0$ for $i \ge 1$, we calculate:
\begin{align}
(\dcat{i+1}\circ\dcat{i} f)(\lambda_0, \dots, \lambda_{i+1})
    &= (\dcat{i} f)(\lambda_1, \dots, \lambda_{i+1} ) \label{eq:firstterm}\\
    &\qquad{} + \sum^{i+1}_{j=1} (-1)^j(\dcat{i} f)
        \big(\lambda_0, \dots, (\lambda_{j-1}\lambda_j), \dots, \lambda_{i+1}\big) \label{eq:midsum}\\
    &\qquad\qquad{} + (-1)^{i+2} (\dcat{i} f)(\lambda_0, \dots, \lambda_{i}). \label{eq:lastterm}
\end{align}
We must show that the right-hand side is equal to zero. Expand each term using~\eqref{eq:dcat def}.
For each $j$, the $j$\textsuperscript{th} term in the expansion of~\eqref{eq:firstterm} cancels the
first term in the expansion of the $j$\textsuperscript{th} summand of~\eqref{eq:midsum}. Likewise,
the $j$\textsuperscript{th} term in the expansion of~\eqref{eq:lastterm} cancels with the last term
in the expansion of the $j$\textsuperscript{th} summand of~\eqref{eq:midsum}. Finally, for $2 \le j
\le i$, the $i$\textsuperscript{th} term in the expansion of the $j$\textsuperscript{th} summand
of~\eqref{eq:midsum} cancels with the $j$\textsuperscript{th} term in the expansion of the
$(i+1)$\textsuperscript{st} summand.
\end{proof}

\begin{dfn}\label{dfn:categorical-cohomology}
The \emph{categorical cohomology} of $\Lambda$ with coefficients in $A$ is the cohomology
$\Hcat*(\Lambda,A)$ of the cochain complex described above. That is,
\[
\Hcat{r}(\Lambda,A) := \ker(\dcat{r})/\image(\dcat{r-1})\quad\text{ for each $r$.}
\]
We write $\Bcat{r}(\Lambda,A)$ for the group $\image(\dcat{r-1})$ of $r$-coboundaries, and
$\Zcat{r}(\Lambda,A)$ for the group $\ker(\dcat{r})$ of $r$-cocycles.
\end{dfn}

\begin{rmk}
For each $r$, $\Hcat{r}(\Lambda,A)$ is a bifunctor which is covariant in $A$ and contravariant in
$\Lambda$.
\end{rmk}

\begin{rmk}\label{rmk:groupoid cohomology}
Definitions \ref{dfn:coboundary maps}~and~\ref{dfn:categorical-cohomology} make sense for an
arbitrary small category $\Lambda$. If the category also carries a topology compatible with the
structure maps, and $A$ is a locally compact abelian group, it is natural to require $A$-valued
$n$-cochains on $\Lambda$ to be continuous. In this paper, we distinguish this \emph{continuous
cocycle cohomology} from its discrete cousin by denoting the cochain groups $\Cgpd*(\Lambda, A)$,
the coboundary groups $\Bgpd*(\Lambda, A)$, the cocycle groups $\Zgpd*(\Lambda, A)$ and the
cohomology groups $\Hgpd*(\Lambda, A)$. If $\Lambda$ is a topological groupoid $\Gg$ in the sense
of Renault, then we have simply replicated Renault's continuous cocycle cohomology of $\Gg$
introduced in \cite{Renault1980}.
\end{rmk}

A function from a $k$-graph $\Lambda$ into a group $G$ is called a \emph{functor} if it preserves
products. Such functors have sometimes been referred to informally as
cocycles; the following lemma justifies this informal usage.

\begin{lem}\label{lem:what r cocycles}
Let $(\Lambda,d)$ be a $k$-graph, and let $A$ be an abelian
group. Then a cochain $f_0 \in \Ccat{0} ( \Lambda , A )$ is a categorical
$0$-cocycle if and only if it is constant on connected
components; a cochain $f_1  \in \Ccat{1}(\Lambda,A)$ is a categorical
$1$-cocycle if and only if it is a functor; and a cochain
$f_2 \in \Ccat{2}(\Lambda,A)$ is a categorical $2$-cocycle if and
only if it satisfies the cocycle identity
\begin{equation} \label{eq:cocyleid}
f_2(\lambda_1,\lambda_2) + f_2(\lambda_1\lambda_2, \lambda_3) = f_2(\lambda_2, \lambda_3) + f_2(\lambda_1, \lambda_2\lambda_3)
\end{equation}
for all $(\lambda_1,\lambda_2,\lambda_3) \in \Lambda^{*3}$.
\end{lem}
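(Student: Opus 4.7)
The plan is to verify each of the three characterizations by directly expanding the coboundary maps from Definition~\ref{dfn:coboundary maps} and matching the conditions. All three equivalences are essentially algebraic manipulations, so the main ``obstacle'' is really just bookkeeping and, for the $r=0$ case, spelling out what ``connected components'' means.

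First, for $f_0 \in \Ccat0(\Lambda, A)$, I would apply the definition $(\dcat 0 f_0)(\lambda) = f_0(s(\lambda)) - f_0(r(\lambda))$. Thus $f_0 \in \Zcat0(\Lambda, A)$ iff $f_0(s(\lambda)) = f_0(r(\lambda))$ for every $\lambda \in \Lambda$. Recalling that the connected components of $\Lambda$ are by definition the equivalence classes of $\Lambda^0$ under the equivalence relation generated by $u \sim v$ whenever there exists $\lambda \in \Lambda$ with $r(\lambda) = u$ and $s(\lambda) = v$, this is precisely the condition that $f_0$ is constant on connected components.

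For $f_1 \in \Ccat1(\Lambda, A)$, expanding~\eqref{eq:dcat def} with $r = 1$ gives $(\dcat 1 f_1)(\lambda_1, \lambda_2) = f_1(\lambda_2) - f_1(\lambda_1 \lambda_2) + f_1(\lambda_1)$ for all composable pairs. Hence $f_1 \in \Zcat1(\Lambda,A)$ iff $f_1(\lambda_1 \lambda_2) = f_1(\lambda_1) + f_1(\lambda_2)$ for every $(\lambda_1, \lambda_2) \in \Lambda^{*2}$. Combined with the normalisation condition $f_1(\id_v) = 0$ for $v \in \Lambda^0$, which is automatic for a $1$-cochain, this is exactly the statement that $f_1$ is a functor from $\Lambda$ to $A$ (regarded as a one-object category); conversely any functor preserves identities and composition and so satisfies both the normalisation and cocycle conditions.

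For $f_2 \in \Ccat2(\Lambda, A)$, I expand~\eqref{eq:dcat def} with $r = 2$ applied to $(\lambda_1, \lambda_2, \lambda_3) \in \Lambda^{*3}$ to get
\[
(\dcat 2 f_2)(\lambda_1, \lambda_2, \lambda_3) = f_2(\lambda_2, \lambda_3) - f_2(\lambda_1 \lambda_2, \lambda_3) + f_2(\lambda_1, \lambda_2 \lambda_3) - f_2(\lambda_1, \lambda_2).
\]
Rearranging, $\dcat 2 f_2 = 0$ is equivalent to~\eqref{eq:cocyleid}, completing the proof. The only subtlety worth a remark is that the normalisation hypothesis is used implicitly (in the 0- and 1-cocycle statements) to make the equivalences tight; no nontrivial argument is required beyond carefully substituting the definitions.
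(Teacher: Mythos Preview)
Your proof is correct and follows essentially the same approach as the paper: each case is handled by directly expanding the relevant coboundary map from Definition~\ref{dfn:coboundary maps} and reading off the resulting condition. Your version is slightly more explicit about what ``connected components'' means and about the role of normalisation, but the argument is identical in substance.
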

\begin{proof}
For the first statement, note that $f_0$ is a 0-cocycle
if and only if $(\dcat0f_0)(\lambda) = 0$ for all $\lambda$,
which occurs if and only if $f_0(s(\lambda)) = f_0(r(\lambda))$
for all $\lambda$; that is, if and only if $f_0$ is constant on
connected components.

A $1$-cochain $f_1$ is a $1$-cocycle if and only if $(\dcat1f_1)(\lambda_1,\lambda_2) = 0$ for all
$(\lambda_1, \lambda_2) \in \Lambda^{*2}$; that is, if and only if
\[
f_1(\lambda_1) - f_1(\lambda_1\lambda_2) + f_1(\lambda_2) = 0\text{ for all $( \lambda_1,\lambda_2 ) \in \Lambda^{*2}$,}
\]
and this in turn is equivalent to the assertion that $f_1$ is a
functor.

Fix $f_2 \in \Ccat2(\Lambda, A)$. Then $f_2 \in \Zcat{2} ( \Lambda , A )$ if and only if for all
$(\lambda_1 , \lambda_2 , \lambda_3) \in \Lambda^{*3}$,
\[
0 = (\dcat2 f_2)(\lambda_1,\lambda_2,\lambda_3)
    = f_2(\lambda_2,\lambda_3) - f_2(\lambda_1\lambda_2, \lambda_3)
        + f_2(\lambda_1, \lambda_2\lambda_3) - f(\lambda_2,\lambda_3).
\]
Hence $f_2$ is a $2$-cocycle if and only if it satisfies \eqref{eq:cocyleid}.
\end{proof}

We now turn to the relationship between the cubical and the categorical cohomology of a $k$-graph
$\Lambda$. We will ultimately prove that $\Hcub{i} ( \Lambda,  A ) \cong \Hcat{i} ( \Lambda , A )$
for $i \le 2$, but sorting this out will take the remainder of this section and all of the next.

\begin{rmk} \label{rmk:0-hom}
By definition of the coboundary maps on cohomology from \cite{kps3}, an $A$-valued $0$-cocycle on a
$k$-graph $\Lambda$ is a function $c : \Lambda^0 \to A$ which is invariant for the equivalence
relation $\sim_{\mathrm{cub}}$ on vertices generated by $r(e) \sim_{\mathrm{cub}} s(e)$ for each
edge $e$. As in Lemma~\ref{lem:what r cocycles} an $A$-valued categorical $0$-cocycle on $\Lambda$
is a function $f_0 : \Lambda^0 \to A$ which is invariant for the equivalence relation
$\sim_{\mathrm{cat}}$ on vertices generated by $r(\lambda) \sim_{\mathrm{cat}} s(\lambda)$ for all
$\lambda \in \Lambda$. Since every path in $\Lambda$ can be factorised into edges,
$\sim_{\mathrm{cub}}$ and $\sim_{\mathrm{cat}}$ are identical. Hence
\[
\Hcub0(\Lambda,A) = \Hcat0(\Lambda,A) = \{f : \Lambda^0 \to A \mid f\text{ is constant on connected components}\}.
\]
\end{rmk}

We prove next that restriction of functions gives isomorphisms $\Zcat1(\Lambda, A)
\cong \Zcub1(\Lambda, A)$, $\Bcat1(\Lambda, A) \cong \Bcub1(\Lambda,A)$ and hence $\Hcat1(\Lambda,
A) \cong \Hcub1(\Lambda, A)$.

\begin{thm}\label{thm:cocycles}
Let $\Lambda$ be a $k$-graph and let $f \in C^1(\Lambda,A)$. If $f \in \Zcub1(\Lambda, A)$ then
there exists a unique element $\tilde{f} \in \Zcat{1} ( \Lambda , A )$ such that $\tilde{f}|_{Q_1 (
\Lambda )} = f$. Conversely, if $g \in \Zcat{1} ( \Lambda , A)$, then $g|_{Q_1 ( \Lambda )} \in
\Zcub{1} ( \Lambda , A )$. Finally, $f \in \Bcub1(\Lambda,A)$ if and only if $\tilde{f} \in
\Bcat1(\Lambda,A)$, and the map $f \mapsto \tilde{f}$ induces an isomorphism $\Hcub1(\Lambda, A)
\cong \Hcat1(\Lambda, A)$.
\end{thm}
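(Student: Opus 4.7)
The plan is to extend a cubical $1$-cocycle $f$ to all of $\Lambda$ by summing $f$-values along any factorisation of a path into edges, verify well-definedness via the cubical $1$-cocycle identity on a single commuting square, and then observe via Lemma~\ref{lem:what r cocycles} that the extension is exactly a categorical $1$-cocycle. The restriction direction and the coboundary statement will then follow by unwinding the definitions.

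For existence of $\tilde f$, I will set $\tilde f(v) := 0$ for $v \in \Lambda^0$, and for $\lambda \in \Lambda$ with $|\lambda| \ge 1$ fix any edge factorisation $\lambda = e_1 \cdots e_n$ (supplied by iterated use of the factorisation property) and set $\tilde f(\lambda) := \sum_{j=1}^n f(e_j)$. Well-definedness reduces to the combinatorial observation that the edge factorisations of $\lambda$ biject with the orderings of the multiset of standard generators summing to $d(\lambda)$, and any two such orderings are connected by a sequence of transpositions of adjacent distinct generators. Each such transposition replaces consecutive edges $e_j, e_{j+1}$ of distinct degrees by the unique pair $e'_j, e'_{j+1}$ with $e_j e_{j+1} = e'_j e'_{j+1} =: \mu \in Q_2(\Lambda)$, and the cubical cocycle identity $(\dcub1 f)(\mu) = 0$ unwinds to $f(e_j) + f(e_{j+1}) = f(e'_j) + f(e'_{j+1})$, leaving the total sum unchanged. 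Uniqueness is forced: any categorical $1$-cocycle extending $f$ is a functor by Lemma~\ref{lem:what r cocycles} and so must telescope across an edge factorisation to $\sum_j f(e_j)$.

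Next I will verify the two cocycle correspondences. To see $\tilde f \in \Zcat1(\Lambda, A)$, I concatenate factorisations to obtain $\tilde f(\alpha\beta) = \tilde f(\alpha) + \tilde f(\beta)$ for composable $(\alpha,\beta) \in \Lambda^{*2}$, so that $\tilde f$ is a functor and hence a categorical $1$-cocycle by Lemma~\ref{lem:what r cocycles}. Conversely, if $g \in \Zcat1(\Lambda, A)$ and $\mu = ab = cd \in Q_2(\Lambda)$, the functor property gives $g(a) + g(b) = g(\mu) = g(c) + g(d)$, which rearranges to $(\dcub1(g|_{Q_1(\Lambda)}))(\mu) = 0$.

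For the coboundary equivalence, a direct computation gives $(\dcub0 h)(e) = h(s(e)) - h(r(e)) = (\dcat0 h)(e)$ for every $h : \Lambda^0 \to A$ and every edge $e$, and summing along an edge factorisation of $\lambda$ telescopes to $(\dcat0 h)(\lambda) = h(s(\lambda)) - h(r(\lambda))$; thus $f = \dcub0 h$ if and only if $\tilde f = \dcat0 h$. It follows that restriction descends to an isomorphism $\Hcub1(\Lambda, A) \cong \Hcat1(\Lambda, A)$. The main obstacle is the well-definedness of $\tilde f$, which rests on the combinatorial fact that any two edge orderings of a common multiset of generators can be joined by adjacent transpositions of distinct generators; once this is in hand, the cubical cocycle identity on a single $2$-cube is precisely what is needed to make the extension unambiguous, and the remaining arguments are formal.
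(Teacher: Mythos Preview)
Your proof is correct and follows essentially the same approach as the paper's: both extend $f$ by summing over an edge factorisation, establish well-definedness via the observation that any two edge factorisations are connected by adjacent transpositions of distinct-degree edges (the paper's ``allowable transitions and their inverses''), and use the cubical $1$-cocycle identity on a single $2$-cube to show invariance under each such transposition. The converse direction, uniqueness, and coboundary equivalence are handled identically in both arguments.
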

\begin{proof}
Suppose first that $g \in \Zcat{1} (\Lambda , A)$ and let $g_0 = g|_{Q_1 ( \Lambda )}$. Then for
any $\lambda \in Q_2(\Lambda)$, we have
\begin{align*}
\dcub1(g_0)(\lambda)
    &= g_0 (F_1^1(\lambda)) - g_0 (F_1^0(\lambda)) - g_0 (F_2^1(\lambda)) + g_0 (F_2^0(\lambda)) \\
    &= \big(g_0(F_2^0(\lambda)) + g_0(F_1^1(\lambda))\big)
        - \big(g_0(F_1^0(\lambda)) + g_0(F_2^1(\lambda))\big).
\end{align*}
Since $F_2^0(\lambda)F_1^1(\lambda) = \lambda =
F_1^0(\lambda)F_2^1(\lambda)$, that $g$ is a functor
implies that $\dcub1(g_0) = 0$ so $g_0 \in \Zcub{1} ( \Lambda , A )$.

Now suppose that $f \in \Zcub1(\Lambda, A)$. We claim that there is a well-defined functor
$\tilde{f} : \Lambda \to A$ such that for any path $\lambda \in \Lambda$ and any factorisation
$\lambda = \lambda_1 \cdots \lambda_{|\lambda |}$ with each $\lambda_i \in Q_1(\Lambda)$,
\begin{equation}\label{eq:tildef formula}
\tilde{f}(\lambda) = \sum_{i=1}^{|\lambda|} f(\lambda_i).
\end{equation}
Given a path $\lambda \in \Lambda$, an edge-factorisation of $\lambda$ is a decomposition $\lambda
= \lambda_1 \cdots \lambda_{|\lambda|} \in \Lambda$ with each $\lambda_i \in Q_1(\Lambda)$. We say
that
\[
\lambda_1 \cdots \lambda_i\lambda_{i+1} \cdots \lambda_{| \lambda |}
    \to \lambda_1 \cdots \lambda'_i\lambda'_{i+1} \cdots \lambda_{| \lambda |}
\]
is an \emph{allowable transition of edge-factorisations of $\lambda$} if $d(\lambda_i) =
d(\lambda'_{i+1}) = e_j$ and $d(\lambda_{i+1}) = d(\lambda'_i) = e_l$ for some $1 \le l < j \le k$,
and $\lambda_i\lambda_{i+1} = \lambda'_i\lambda'_{i+1}$. Any edge-factorisation of a fixed path
$\lambda \in \Lambda$ can be transformed into any other by a sequence of such allowable transitions
and their inverses. Since $f$ is a cocycle, the formula~\eqref{eq:tildef formula} is invariant
under allowable transitions and so determines a well-defined function $\tilde{f}$ from $\Lambda$ to
$A$, which is a functor which extends $f$ by definition. Moreover, any functor $\tilde{f} : \Lambda
\to A$ which extends $f$ must satisfy~\eqref{eq:tildef formula}, and so must be equal to
$\tilde{f}$.

A function $f : Q_1(\Lambda) \to A$ belongs to $\Bcub1(\Lambda,A)$ if and only if there is a map $b
: \Lambda^0 \to A$ such that $f(\lambda) = b(s(\lambda)) - b(r(\lambda))$ for all $\lambda \in
Q_1(\Lambda)$. It follows that $f \in \Bcub1(\Lambda,A)$ if and only if there is a function $b :
\Lambda^0 \to A$ such that the unique extension $\tilde{f} : \Lambda \to A$ of the preceding paragraph satisfies $\tilde{f}(\lambda) = b(s(\lambda)) - b(r(\lambda)) =
(\dcat0b)(\lambda)$ for all $\lambda \in \Lambda$; that is, if and only if $\tilde{f} \in
\Bcat1(\Lambda,A)$.
\end{proof}

We now wish to show that each cubical $2$-cocycle determines a categorical $2$-cocycle, and deduce
that there is a homomorphism from $\Zcub2(\Lambda,A)$ to $\Zcat2(\Lambda,A)$ which descends to a
homomorphism $\psi : \Hcub2(\Lambda,A) \to \Hcat2(\Lambda,A)$. The set-up and proof of this result
will occupy the remainder of this section. In the next section, we will introduce central
extensions of $k$-graphs by abelian groups to show that $\psi$ is an isomorphism.

So for the remainder of the section, we fix a $k$-graph $\Lambda$ and an abelian group $A$. By
definition of $\dcub2$, for $\phi \in \Zcub2(\Lambda,A)$ and any $\lambda \in Q_3 ( \Lambda )$,
\begin{equation}\label{eq:cubical 2-cocycle}
\phi(F^0_3(\lambda)) + \phi(F^1_2(\lambda)) + \phi(F^0_1(\lambda))
    = \phi(F^1_1(\lambda)) + \phi(F^0_2(\lambda)) + \phi(F^1_3(\lambda)).
\end{equation}

To commence our construction of the homomorphism $\psi : \Hcub2(\Lambda,A) \to \Hcat2(\Lambda,A)$
we recall the notion of the skeleton, viewed as a $k$-coloured graph, of a $k$-graph $\Lambda$.

\begin{ntn}\label{ntn:coloured graph}
A $\emph{$k$-coloured graph}$ is a directed graph $E$ endowed with a map $C : E^1 \to \{1, \dots,
k\}$ which we regard as assigning a colour to each edge. Using our convention that the
path-category of $E$, regarded as a $1$-graph, is still denoted $E$, we extend $C$ to a functor,
also denoted $C$, from $E$ to the free semigroup $\FF^+_k = \langle 1, 2, \dots, k\rangle$ on $k$
generators.

Given a $k$-graph $\Lambda$ we write  $E_\Lambda$ for the $k$-coloured graph such that $E_\Lambda^0 =
\Lambda^0$, $E_\Lambda^1 = Q_1(\Lambda) = \bigcup^k_{i=1} \Lambda^{e_i}$, the maps $r,s :
E_\Lambda^1 \to E_\Lambda^0$ are inherited from $\Lambda$, and $d(\alpha) = e_{C(\alpha)}$ for all
$\alpha \in Q_1(\Lambda)$.

There is a surjective functor $\pi : E_\Lambda \to \Lambda$ such that $\pi(\alpha) = \alpha$ for
all $\alpha \in Q_1(\Lambda)$. Let $q : \FF^+_k \to \NN^k$ be the semigroup homomorphism such that
$q(i) = e_i$ for $1 \le i \le k$. Then $q \circ C = d \circ \pi$.

We define a preferred section for $\pi$ as follows. Given $\lambda \in \Lambda^n$, we denote by
$\overline{\lambda} \in E_\Lambda$ the unique path $\overline{\lambda}_1 \dots
\overline{\lambda}_{|n|}$ in $E_\Lambda$ such that $\pi(\overline{\lambda}) = \lambda$ and
$C(\overline{\lambda}_i) \le C(\overline{\lambda}_{i+1})$ for all $i$\footnote{The ordering on the
generators of $\FF^+_k$ is just the usual ordering of $\{1, \dots, k\}$.}.

An \emph{allowable transition} in $E_\Lambda$ is an ordered pair $(u,w) \in E_\Lambda \times
E_\Lambda$ such that $\pi(u) = \pi(w)$ and there is an $i$ such that $u_j = w_j$ for $j \not\in
\{i,i+1\}$ and $C(w_{i+1})= C(u_i) < C(u_{i+1}) = C(w_i)$. The factorisation property forces
$u_iu_{i+1} = w_iw_{i+1}$ because  $\pi(u) = \pi(w)$ in $\Lambda$. Informally, if $(u,w)$ is an
allowable transition, then the edges $w_i$ and $w_{i+1}$ are in reverse colour-order, and $u$ is
the path obtained by switching them around using the factorisation property in $\Lambda$. If $(u,
w)$ is an allowable transition we define $p(u,w) := \min\{j : u_j \not= w_j\}$.
\end{ntn}

\begin{dfn}\label{dfn:transition graph}
Given a $k$-graph $\Lambda$, the \emph{transition graph} of $\Lambda$ is the $1$-graph $F_\Lambda$
such that $F_\Lambda^0 := E_\Lambda$, $F_\Lambda^1 := \{(u,w) : (u,w)\text{ is an allowable
transition in }E_\Lambda\}$, and $r,s : F_\Lambda^1 \to F_\Lambda^0$ are defined by $r(u,w) := u$
and $s(u,w) := w$.
\end{dfn}

Let $\Lambda$ be a $k$-graph. Given $u \in F_\Lambda^0$, since $u$ is a path in $E_\Lambda$, we
will frequently write $\ell(u)$ for the number of edges in $u$ regarded as a path in $E_\Lambda$.
The connected components of the transition graph $F_\Lambda$ are in one-to-one correspondence with
elements of $\Lambda$. Specifically, given a path $\lambda \in \Lambda$, the set $\pi^{-1}(\lambda)
\subset F_\Lambda^0$ is the collection of vertices in a connected component $F_\lambda$ of
$F_\Lambda$. We have $\ell(u) = |\lambda|$ for all $u \in F^0_\lambda$.

Each $F_\lambda$ (and hence $F_\Lambda$) contains no directed cycles. Moreover, for each $\lambda
\in \Lambda$, the preferred factorisation $\overline{\lambda}$ is the unique terminal vertex of
$F_\lambda$.

Define $h: F_\Lambda^0 \to \NN$ by
\[
h(u) = \sum_{i=1}^{\ell(u)} |\{j < i : C(u_j) > C(u_i)\}|.
\]
An induction shows that $h(u)$ measures the distance from $u$ to the terminal vertex in its
connected component: that is, we have $h(u) = |\alpha|$ for any path $\alpha \in \overline{\pi(u)}
F_\Lambda u$. In particular, for $\lambda \in \Lambda$, we have $h(\overline{\lambda}) = 0$, and if
$u,w \in F^0_\Lambda$, and $\alpha \in u F_\Lambda w$, then $|\alpha| = h(w) - h(u)$.

\begin{ntn}\label{ntn:phitilde}
For $\phi \in \Zcub2(\Lambda, A)$, define $\tilde{\phi} : F^1_\Lambda \to A$ as follows: if $(u,w) \in F^1_\Lambda$ and $p(u,w) = i$, then $\tilde{\phi}(u,w) = \phi(\pi(u_i u_{i + 1}))$. That is, $\tilde{\phi}(u,w)$ is the value of $\phi$ on the element of $Q_2(\Lambda)$ which is flipped when passing from $w$ to $u$. We extend $\tilde\phi$ to a functor from $F_\Lambda$ to $A$ by $\tilde\phi(\alpha) := \sum^{|\alpha|}_{i=1} \tilde\phi(\alpha_i)$.
\end{ntn}

\begin{lem}\label{lem:meet-up}
Let $\tau, \rho \in F^1_\Lambda$ with $s(\tau) = s(\rho)$. Then there exist $\mu \in F_\Lambda
r(\tau)$ and $\nu \in F_\Lambda r(\rho)$ such that $r(\mu) = r(\nu)$ and $\tilde\phi(\mu\tau) =
\tilde\phi(\nu\rho)$.
\end{lem}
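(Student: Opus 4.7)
The plan is a case analysis on the positions at which $\tau$ and $\rho$ perform their swaps. Write $w := s(\tau) = s(\rho)$, and let $p(\tau) = i$ and $p(\rho) = j$, so that $\tau$ flips the edges at positions $i, i+1$ in $w$ and $\rho$ flips those at positions $j, j+1$. I will split the argument into three cases based on $|i - j|$.

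If $i = j$ then $\tau = \rho$, and I take $\mu = \nu$ to be the identity morphism at $r(\tau)$. If $|i - j| \ge 2$, the two swaps act on disjoint pairs of positions and therefore commute: I let $\mu$ be the allowable transition at positions $(j, j+1)$ in $r(\tau)$ and $\nu$ the allowable transition at positions $(i, i+1)$ in $r(\rho)$, so that both terminate at the vertex obtained from $w$ by performing both swaps. Since the $2$-face flipped by $\mu$ is $\pi(w_j w_{j+1}) \in Q_2(\Lambda)$, which is also the $2$-face flipped by $\rho$, one has $\tilde\phi(\mu) = \tilde\phi(\rho)$, and symmetrically $\tilde\phi(\nu) = \tilde\phi(\tau)$; commutativity in $A$ then gives $\tilde\phi(\mu\tau) = \tilde\phi(\nu\rho)$.

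The essential case is $|i - j| = 1$, which by symmetry I may assume means $j = i+1$. Then $C(w_i) > C(w_{i+1}) > C(w_{i+2})$ are three distinct colours, so $\lambda_0 := \pi(w_i w_{i+1} w_{i+2})$ lies in $Q_3(\Lambda)$. The six reorderings of its three edges form the vertices of a hexagon in the connected component of $F_\Lambda$ containing $w$, and the two length-$3$ paths from the fully reversed factorisation (which occurs in $w$ at positions $i, i+1, i+2$) back to the preferred one provide length-$3$ continuations of $\tau$ and of $\rho$ that meet at a common vertex $v$. I take $\mu$ and $\nu$ to be the length-$2$ tails of these continuations. The hard part is then to check that each of the six edges of the hexagon corresponds, under $\tilde\phi$, to the $2$-face $F_j^\ell(\lambda_0)$ that it flips; once this bookkeeping is done, the $\tilde\phi$-sums along the two halves of the hexagon are precisely the two sides of the cubical $2$-cocycle identity~\eqref{eq:cubical 2-cocycle} applied to $\lambda_0$, giving $\tilde\phi(\mu\tau) = \tilde\phi(\nu\rho)$.
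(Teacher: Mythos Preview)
Your proof is correct and follows essentially the same approach as the paper's: the same three-way case split on $|p(\tau)-p(\rho)|$, the trivial case, the commuting-swap case, and the hexagon argument for adjacent positions reducing to the cubical $2$-cocycle identity~\eqref{eq:cubical 2-cocycle}. The paper carries out the bookkeeping in Case~3 explicitly (identifying each edge of the hexagon with the corresponding face $F^\ell_j(\lambda_0)$), which you leave as a remark, but the structure is identical.
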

\begin{proof}
Let $w := s(\tau)$, and let $n = \ell(w)$ so that $w = w_1 \cdots w_n$ with each $w_i \in
E^1_\Lambda$. We assume without loss of generality that $p(\tau) \le p(\rho)$. We consider three
cases.

Case~1: $p(\tau) = p(\rho)$. Then $\tau = \rho$, and $\mu = \nu
= r(\tau)$ trivially have the desired properties.

Case~2: $p(\rho) \ge p(\tau) + 2$. Then
\begin{align*}
r(\tau) &= w_1 \cdots w_{p(\tau)-1} ef w_{p(\tau)+2}\cdots w_{p(\rho)-1} w_{p(\rho)} w_{p(\rho)+1} w_{p(\rho)+2} \cdots w_n\quad\text{ and}\\
r(\rho) &= w_1 \cdots w_{p(\tau)-1} w_{p(\tau)} w_{p(\tau)+1} w_{p(\tau)+2}\cdots w_{p(\rho)-1} gh w_{p(\rho)+2} \cdots w_n
\end{align*}
where $w_{p(\tau)} w_{p(\tau)+1} = ef$ and $w_{p(\rho)} w_{p(\rho)+1} = gh$ are $2$-cubes of
$\Lambda$. Let
\[
    v := w_1 \cdots w_{p(\tau)-1} ef w_{p(\tau)+2}\cdots w_{p(\rho)-1} gh w_{p(\rho)+2} \cdots w_n \in F^0_\Lambda,
\]
and let $\mu := (v, r(\tau))$ and $\nu := (v,r(\rho))$. Then
$\mu \in F^1_\Lambda r(\tau)$ and $\nu \in F^1_\Lambda r(\rho)$
with $r(\mu) = r(\nu) = v$, and $\tilde\phi(\mu\tau) = \phi(gh)
+ \phi(ef) = \tilde\phi(\nu\rho)$ as required.

Case~3: $p(\rho) = p(\tau) + 1$. Then $\lambda := \pi(w_{p(\tau)}w_{p(\tau)+1}w_{p(\tau)+2})$
belongs to $Q_3(\Lambda)$. Hence we may factorise $\lambda$ as in~\eqref{eq:commutingcube}. That
is,
\[
\lambda = f_0 g_0 h_0 = f_0 h_1 g_1 = h_2 f_1 g_1 = h_2 g_2 f_2 = g_3 h_3 f_2 = g_3 f_3 h_0
\]
where $C(f_i) = C(f_0) < C(g_j) = C(g_0) < C(h_l) = C(h_0)$ for all $i,j,l$. Since $\rho$ and
$\tau$ are allowable transitions, we have $w_{p(\tau)} = h_2$, $w_{p(\tau) + 1} = g_2$ and
$w_{p(\tau) + 2} = f_2$. We have
\begin{align*}
r(\tau) &= w_1\cdots w_{p(\tau)-1}g_3 h_3 f_2 w_{p(\tau)+3}\cdots w_n
    \quad\text{and}\\
r(\rho) &= w_1\cdots w_{p(\tau)-1}h_2 f_1 g_1 w_{p(\tau)+3}\cdots w_n.
\end{align*}
Define $\mu, \nu \in F_\Lambda^2$ by
\begin{align*}
\mu &:= (w_1\cdots f_0g_0h_0 \cdots w_n,\; w_1\cdots g_3f_3h_0 \cdots w_n)\\
    &\qquad\qquad (w_1\cdots g_3f_3h_0\cdots w_n,\; w_1\cdots g_3h_3f_2\cdots w_n),\\
\intertext{and}
\nu &:= (w_1\cdots f_0g_0h_0 \cdots w_n,\; w_1\cdots f_0h_1g_1 \cdots w_n))\\
    &\qquad\qquad (w_1\cdots f_0h_1g_1\cdots w_n,\; w_1\cdots h_2f_1g_1 \cdots w_n).
\end{align*}
Then $\mu \in F_\Lambda r(\tau)$ and $\nu \in F_\Lambda r(\beta)$ with $r(\mu) = r(\nu)$. Moreover,
\begin{align*}
\tilde\phi(\mu\tau) &= \phi(f_0g_0) + \phi(f_3 h_0) + \phi(g_3 h_3) = \phi(F^0_3(\lambda)) + \phi(F^1_2(\lambda)) + \phi(F^0_1(\lambda))\quad\text{ and}\\
\tilde\phi(\nu\rho) &= \phi(g_0h_0) + \phi(f_0 h_1) + \phi(f_1 g_1) = \phi(F^1_1(\lambda)) + \phi(F^0_2(\lambda)) + \phi(F^1_3(\lambda)),
\end{align*}
so $\tilde\phi(\mu\tau) = \tilde\phi(\nu\rho)$
by~\eqref{eq:cubical 2-cocycle}.
\end{proof}

\begin{lem}\label{lem:coboundary on transition graph}
Let $\Lambda$ be a $k$-graph. There is a well-defined function $S_\phi : F_\Lambda^0 \to A$ defined
by $S_\phi(w) = \tilde\phi(\alpha)$ for any $\alpha \in \overline{\pi(w)} F_\Lambda w$.
\end{lem}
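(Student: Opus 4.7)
The plan is to induct on $h(w)$. The base case $h(w) = 0$ is immediate: then $w = \overline{\pi(w)}$, and the only path in $\overline{\pi(w)} F_\Lambda w$ is the identity at $w$, on which $\tilde\phi$ is zero.

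For the inductive step, fix $w$ with $h(w) > 0$ and let $\alpha, \beta \in \overline{\pi(w)} F_\Lambda w$. Factor off the last edges: write $\alpha = \alpha' \tau$ and $\beta = \beta' \rho$ with $\tau, \rho \in F^1_\Lambda$, $s(\tau) = s(\rho) = w$, $\alpha' \in \overline{\pi(w)} F_\Lambda r(\tau)$, and $\beta' \in \overline{\pi(w)} F_\Lambda r(\rho)$. If $\tau = \rho$, then since $h(r(\tau)) = h(w) - 1$, the inductive hypothesis gives $\tilde\phi(\alpha') = \tilde\phi(\beta')$, and hence $\tilde\phi(\alpha) = \tilde\phi(\beta)$.

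The substantive case is $\tau \neq \rho$. The key input here is Lemma~\ref{lem:meet-up}, which produces $\mu \in F_\Lambda r(\tau)$ and $\nu \in F_\Lambda r(\rho)$ with common range $v := r(\mu) = r(\nu)$ and $\tilde\phi(\mu\tau) = \tilde\phi(\nu\rho)$. The vertex $v$ lies in the connected component $F_{\pi(w)}$, which is acyclic with unique terminal vertex $\overline{\pi(w)}$, so following outgoing edges from $v$ yields some $\gamma \in \overline{\pi(w)} F_\Lambda v$. Now $\gamma\mu$ and $\alpha'$ are both paths in $\overline{\pi(w)} F_\Lambda r(\tau)$, so the inductive hypothesis at $r(\tau)$ gives $\tilde\phi(\gamma\mu) = \tilde\phi(\alpha')$ and hence $\tilde\phi(\alpha) = \tilde\phi(\gamma) + \tilde\phi(\mu\tau)$. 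Symmetrically, $\tilde\phi(\beta) = \tilde\phi(\gamma) + \tilde\phi(\nu\rho)$, and the equality $\tilde\phi(\mu\tau) = \tilde\phi(\nu\rho)$ closes the case.

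The only real obstacle is the case $\tau \neq \rho$, and this has been absorbed already into Lemma~\ref{lem:meet-up}: that is where the cubical cocycle identity \eqref{eq:cubical 2-cocycle} is really invoked, via its Case~3 at a $3$-cube. Everything else is a matter of organising the induction and using the elementary fact that every vertex in an acyclic directed graph with a unique terminal vertex admits a path to that vertex.
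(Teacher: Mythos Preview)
Your proof is correct and follows essentially the same route as the paper's: induction on $h(w)$, peel off the last edge of each path, invoke Lemma~\ref{lem:meet-up} to find a common ancestor $v$ at which the two $\tilde\phi$-contributions match, choose a path from $v$ to the sink, and apply the inductive hypothesis at the intermediate vertices $r(\tau)$ and $r(\rho)$. The only cosmetic difference is that you treat $\tau = \rho$ as a separate trivial case, whereas the paper absorbs it into Lemma~\ref{lem:meet-up} (its Case~1 returns $\mu = \nu = r(\tau)$).
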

\begin{proof}
Since each connected component of $F_\Lambda$ has a unique sink and is finite, it suffices to fix
$\lambda \in \Lambda$, a vertex $w \in F_\lambda^0$ and two paths $\alpha,\beta \in
\overline{\lambda} F_\lambda w$ and show that $\tilde\phi(\alpha) = \tilde\phi(\beta)$. We proceed
by induction on $h(w)$. If $h(w) = 0$ then $w = \overline{\lambda}$ and the result is trivial.

Now fix $n \in \NN$. Suppose as an inductive hypothesis that $\tilde\phi(\alpha) =
\tilde\phi(\beta)$ whenever $\alpha, \beta \in \overline{\lambda} F_\Lambda w$ with $h(w) \le n$.
Fix $w \in F_\Lambda$ with $h(w) = n+1$ and $\alpha,\beta \in \overline{\lambda} F_\Lambda w$.

We have $|\alpha| = |\beta| = n+1$. Write $\alpha = \alpha'\alpha_{n+1}$ and $\beta =
\beta'\beta_{n+1}$ where $\alpha_{n+1}, \beta_{n+1} \in F^1_\Lambda$. By Lemma~\ref{lem:meet-up}
applied to $\tau := \alpha_{n+1}$ and $\rho := \beta_{n+1}$ in $F^1_\Lambda w$, there exist $\mu
\in F_\Lambda r(\alpha_{n+1})$ and $\nu \in F_\Lambda r(\beta_{n+1})$ such that $r(\mu) = r(\nu)$
and $\tilde\phi(\mu\alpha_{n+1}) = \tilde\phi(\nu\beta_{n+1})$. Since $\overline{\lambda}$ is the
unique sink in $F_\lambda$, and since $F^0_\lambda$ is finite, there is a path $\eta$ from $r(\mu)$
to $\overline{\lambda}$. The situation is summarised in the following diagram.
\[\begin{tikzpicture}[scale=2]
    \node[circle, inner sep=0pt] (w) at (3,0) {\small$w$};
    \node[circle, inner sep=1.5pt, fill=black] (ra) at (2.5,0.7) {};
    \node[circle, inner sep=1.5pt, fill=black] (rb) at (2.5,-0.7) {};
    \node[circle, inner sep=1.5pt, fill=black] (v) at (2, 0) {};
    \node[circle, inner sep=1pt] (lambdabar) at (0,0) {\small$\overline{\lambda}$};
    \draw[-latex] (w)--(ra) node[pos=0.5,anchor=south west, circle] {\small$\alpha_{n+1}$};
    \draw[-latex] plot[smooth] coordinates {(ra) (2.25, 0.9) (2, 0.8) (1.75,0.6) (1.5,0.7)
            (1.25,0.7) (1, 0.6) (0.75, 0.4) (0.5, 0.3) (0.25, 0.2)}
            -- (lambdabar.north east);
    \node[circle, inner sep=1pt, anchor=south] at (1.25, 0.7) {\small$\alpha'$};
    \draw[-latex] plot[smooth] coordinates {(ra) (2.35, 0.6) (2.2, 0.5) (2.1, 0.3)} -- (v.north east);
    \node[circle, inner sep=1pt, anchor=330] at (2.2,0.5) {\small$\mu$};
    \draw[-latex] (w)--(rb) node[pos=0.5,anchor=north west, circle] {\small$\beta_{n+1}$};
    \draw[-latex] plot[smooth] coordinates {(rb) (2.25, -0.6) (2, -0.5) (1.75, -0.6) (1.5, -0.6)
            (1.25, -0.7) (1, -0.5) (0.75, -0.3) (0.5, -0.3) (0.25, -0.1)} -- (lambdabar.south east);
    \node[circle, inner sep=1pt, anchor=north] at (1.25, -0.7) {\small$\beta'$};
    \draw[-latex] plot[smooth] coordinates {(rb) (2.35, -0.5) (2.2, -0.3) (2.1, -0.2)} -- (v.south east);
    \node[circle, inner sep=1pt, anchor=30] at (2.2,-0.3) {\small$\nu$};
    \draw[-latex] plot[smooth] coordinates{(v) (1.75, 0.1) (1.5, 0.2) (1.25,0) (1, -0.1) (0.75,0)
            (0.5,0.1) (0.25, 0.1)} -- (lambdabar.east);
    \node[circle, inner sep=1pt, anchor=south] at (1, -0.1) {\small$\eta$};
\end{tikzpicture}\]
Since $h(r(\alpha_{n+1})) = h(r(\beta_{n+1})) = n$, the inductive hypothesis gives
$\tilde\phi(\eta\mu) = \tilde\phi(\alpha')$ and $\tilde\phi(\eta\nu) = \tilde\phi(\beta')$. We then
have
\[
\tilde\phi(\alpha)
    = \tilde\phi(\alpha') + \tilde\phi(\alpha_{n+1})
    = \tilde\phi(\eta\mu) + \tilde\phi(\alpha_{n+1})
    = \tilde\phi(\eta) + \tilde\phi(\mu\alpha_{n+1}).
\]
A symmetric calculation shows that
\[
\tilde\phi(\beta) = \tilde\phi(\eta) + \tilde\phi(\nu\beta_{n+1}).
\]
Since $\tilde\phi(\mu\alpha_{n+1}) = \tilde\phi(\nu\beta_{n+1})$ by choice of $\mu$ and $\nu$, it
follows that $\tilde\phi(\alpha) = \tilde\phi(\beta)$.
\end{proof}

Lemma~\ref{lem:coboundary on transition graph} implies that $\tilde\phi$ is a $1$-coboundary of
$F_\Lambda$: specifically, $\tilde\phi = \dcat0(S_\phi)$. We call $S_\phi$ the \emph{shuffle
function} associated with $\phi$. We regard it as measuring the ``cost'' of shuffling the edges in
a coloured path into preferred order.

\begin{thm}\label{thm:2-cocycle map}
Let $(\Lambda,d)$ be a $k$-graph. For $\phi \in
\Zcub2(\Lambda,A)$, define
\[
c_\phi : \Lambda^{*2} \to A\quad\text{ by }
    c_\phi(\mu, \nu) := S_\phi(\overline{\mu}\, \overline{\nu}).
\]
Then $c_\phi \in \Zcat2(\Lambda, A)$ and $\phi \mapsto c_\phi$ is a homomorphism from
$\Zcub2(\Lambda,A)$ to $\Zcat2(\Lambda,A)$ satisfying
\begin{enumerate}
\item\label{it:on squares} $c_\phi(f,g) = \phi(fg)$ if
    $d(f) = e_i$ and $d(g) = e_j$ with $i > j$, and
    $c_\phi(f,g) = 0$ if $d(f) = e_i$ and $d(g) = e_j$ with
    $i \le j$; and
\item\label{it:already preferred} $c_\phi(\mu,\nu) = 0$
    whenever $\overline{\mu}\,\overline{\nu} =
    \overline{\mu\nu}$; in particular
    $c_\phi(r(\lambda),\lambda) =
    c_\phi(\lambda,s(\lambda)) = 0$
for all $\lambda \in \Lambda$.
\end{enumerate}
Moreover, if $\phi \in \Bcub2(\Lambda,A)$, then $c_\phi \in \Bcat2(\Lambda,A)$; hence, $[\phi]
\mapsto [c_\phi]$ defines a homomorphism $\psi: \Hcub2(\Lambda,A) \to \Hcat2(\Lambda,A)$.
\end{thm}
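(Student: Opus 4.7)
The plan is to take property~\eqref{it:already preferred} first and read normalisation off from it, then dispatch~\eqref{it:on squares} by a direct calculation, then obtain the cocycle identity from a path-swapping argument in $F_\Lambda$, and finally handle linearity and the coboundary claim.

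For~\eqref{it:already preferred}, if $\overline{\mu}\,\overline{\nu} = \overline{\mu\nu}$, then this vertex is already the unique terminal vertex of its component of $F_\Lambda$, so $S_\phi$ vanishes there (take the empty path in Lemma~\ref{lem:coboundary on transition graph}). The vertex cases are immediate because $\overline{v}$ is the empty word in $E_\Lambda$ for $v \in \Lambda^0$, and this also provides the normalisation required of a $2$-cochain. For~\eqref{it:on squares}, the word $\overline{f}\,\overline{g}$ equals $fg$ as a path in $E_\Lambda$. When $d(f) = e_i$ and $d(g) = e_j$ with $i \le j$, the word $fg$ is already sorted (or consists of two edges of the same colour, admitting no swap), so $fg = \overline{fg}$ and $c_\phi(f, g) = 0$; when $i > j$, there is a unique edge $(\overline{fg}, fg) \in F^1_\Lambda$, and $\tilde\phi$ of this edge is $\phi(fg)$ by Notation~\ref{ntn:phitilde}.

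For the cocycle identity, I fix $(\mu, \nu, \rho) \in \Lambda^{*3}$ and consider $w := \overline{\mu}\,\overline{\nu}\,\overline{\rho} \in F^0_\Lambda$, which projects to $\mu\nu\rho$. I compute $S_\phi(w)$ along two paths to $\overline{\mu\nu\rho}$. Any path in $F_\Lambda$ from $\overline{\mu}\,\overline{\nu}$ to $\overline{\mu\nu}$ extends to a path from $w$ to $\overline{\mu\nu}\,\overline{\rho}$ by right-concatenation of $\overline{\rho}$ at every vertex: each allowable transition $(u, v)$ yields an allowable transition $(u\overline{\rho}, v\overline{\rho})$ at the same swap position with the same swapped pair, so $\tilde\phi$ is preserved. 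Composing with a path from $\overline{\mu\nu}\,\overline{\rho}$ to $\overline{\mu\nu\rho}$ and invoking Lemma~\ref{lem:coboundary on transition graph} gives $S_\phi(w) = c_\phi(\mu, \nu) + c_\phi(\mu\nu, \rho)$. Symmetrically, left-concatenation of $\overline{\mu}$ onto paths from $\overline{\nu}\,\overline{\rho}$ to $\overline{\nu\rho}$ gives $S_\phi(w) = c_\phi(\nu, \rho) + c_\phi(\mu, \nu\rho)$. Equating yields~\eqref{eq:cocyleid}.

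Linearity of $\phi \mapsto c_\phi$ is immediate from linearity of $\tilde\phi$ in $\phi$. For the coboundary claim, suppose $\phi = \dcub1(b)$ for $b \in \Ccub1(\Lambda, A)$. On a $2$-cube $\lambda = gh = ef$ with $C(g) < C(h)$ and $C(e) > C(f)$, the definition of $\dcub1$ gives $\phi(\lambda) = (b(g) + b(h)) - (b(e) + b(f))$, so each allowable transition contributes to $\tilde\phi$ exactly the change in $\sum_i b(\cdot_i)$ across the transition. Telescoping along any path in $F_\Lambda$ yields $S_\phi(w) = \tilde{b}(\pi(w)) - \sum_{i=1}^{\ell(w)} b(w_i)$, where $\tilde{b}(\lambda) := \sum_{i=1}^{|\lambda|} b(\overline{\lambda}_i)$ is a normalised $1$-cochain on $\Lambda$. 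Setting $w = \overline{\mu}\,\overline{\nu}$ rearranges to $c_\phi(\mu, \nu) = \tilde{b}(\mu\nu) - \tilde{b}(\mu) - \tilde{b}(\nu) = -(\dcat1 \tilde{b})(\mu, \nu)$, so $c_\phi = \dcat1(-\tilde{b}) \in \Bcat2(\Lambda, A)$ and $\psi$ descends to $\Hcub2(\Lambda, A) \to \Hcat2(\Lambda, A)$. The main obstacle is verifying that the concatenation trick in the cocycle step really produces bona fide paths in $F_\Lambda$ with $\tilde\phi$ unchanged; this follows from the position-independence of $\tilde\phi$ and the strictly local nature of allowable transitions, but merits explicit check against Notation~\ref{ntn:coloured graph} and Definition~\ref{dfn:transition graph}.
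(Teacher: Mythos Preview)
Your proof is correct and the coboundary argument is essentially the paper's (you telescope where the paper inducts on $h(w)$, arriving at the same formula $S_\phi(w) = b(\overline{\pi(w)}) - b(w)$ in the paper's notation). The difference worth noting is in the cocycle identity. The paper introduces an auxiliary cochain $\tilde{c}_\phi \in \Zcat2(E_\Lambda,A)$ defined by $\tilde{c}_\phi(u,w) = S_\phi(uw) - S_\phi(u) - S_\phi(w)$, proves the general identity $S_\phi(uw) = S_\phi(\overline{\pi(u)}\,\overline{\pi(w)}) + S_\phi(u) + S_\phi(w)$ for arbitrary $(u,w) \in (E_\Lambda)^{*2}$ (Lemma~\ref{lem:csubphi}), verifies the cocycle identity for $\tilde{c}_\phi$ by a telescoping calculation, and then specialises to $u = \overline{\lambda_i}$. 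Your argument is the special case of that lemma with $u,w$ already in preferred form, so that the correction terms $S_\phi(u), S_\phi(w)$ vanish and the path-extension trick (which is exactly the content of the paper's proof of Lemma~\ref{lem:csubphi}) directly yields $S_\phi(\overline\mu\,\overline\nu\,\overline\rho) = c_\phi(\mu,\nu) + c_\phi(\mu\nu,\rho) = c_\phi(\nu,\rho) + c_\phi(\mu,\nu\rho)$. This is slightly more economical since it avoids naming $\tilde{c}_\phi$ and proving a statement more general than needed; the paper's route has the virtue of isolating the concatenation identity as a reusable lemma.
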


To prove the theorem, we need a further technical result.

\begin{ntn}
For $\phi \in \Zcub2(\Lambda, A)$, define $\tilde{c}_\phi : (E_\Lambda)^{*2} \to A$ by
\[
\tilde{c}_\phi(u,w) := S_\phi(uw) - S_\phi(u) - S_\phi(w).
\]
\end{ntn}

\begin{lem}\label{lem:csubphi}
For all $(u,w) \in (E_\Lambda)^{*2}$, we have
\[
S_\phi(uw) = S_\phi(\overline{\pi(u)}\, \overline{\pi(w)}) + S_\phi(u) +
S_\phi(w),
\]
and hence $\tilde{c}_\phi(u,w) = S_{\phi}(\overline{\pi(u)}\,\overline{\pi(w)})$. Moreover,
$\tilde{c}_\phi \in \Zcat2(E_\Lambda, A)$; that is,
\begin{equation}\label{eq:tildec cocycle id}
    \tilde{c}_\phi(u,w) + \tilde{c}_\phi(uw,x) = \tilde{c}_\phi(w,x) + \tilde{c}_\phi(u,wx)
\end{equation}
for all $(u,w,x) \in (E_\Lambda)^{*3}$.
\end{lem}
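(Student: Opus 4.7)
The plan is to establish the first identity by constructing, for any composable pair $(u,w) \in (E_\Lambda)^{*2}$, a specific path in $F_\Lambda$ from $uw$ to $\overline{\pi(uw)}$ whose $\tilde\phi$-weight decomposes as the sum of the three claimed terms; the remaining assertions will then be routine algebraic consequences of the definition of $\tilde{c}_\phi$.

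First, I would observe that allowable transitions behave well with respect to pre- and post-composition. Concretely, if $(u'',u')$ is an allowable transition in $F^1_\Lambda$ with $p(u'',u') = i$, and $w \in E_\Lambda$ satisfies $s(u'')=s(u')=r(w)$, then $(u''w, u'w)$ is again an allowable transition with the same value of $p$, and $\tilde\phi(u''w, u'w) = \tilde\phi(u'',u')$ because the flip occurs at positions $i,i+1 \le \ell(u)$ and $\pi$ is unaffected on the tail. Symmetrically, prepending a fixed $v \in E_\Lambda$ to the second factor of each transition preserves the $\tilde\phi$-value. Thus any path $\beta : u \to \overline{\pi(u)}$ in $F_\Lambda$ lifts to a path $\beta\!\cdot\!w : uw \to \overline{\pi(u)}\,w$ with $\tilde\phi(\beta\!\cdot\!w) = \tilde\phi(\beta)$, and any path $\gamma : w \to \overline{\pi(w)}$ lifts to $\overline{\pi(u)}\!\cdot\!\gamma : \overline{\pi(u)}\,w \to \overline{\pi(u)}\,\overline{\pi(w)}$ with $\tilde\phi(\overline{\pi(u)}\!\cdot\!\gamma) = \tilde\phi(\gamma)$.

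Choosing witnessing paths $\beta$ and $\gamma$ so that $\tilde\phi(\beta) = S_\phi(u)$ and $\tilde\phi(\gamma) = S_\phi(w)$, and taking any path $\delta : \overline{\pi(u)}\,\overline{\pi(w)} \to \overline{\pi(\pi(u)\pi(w))} = \overline{\pi(uw)}$ inside the connected component $F_{\pi(uw)}$ so that $\tilde\phi(\delta) = S_\phi(\overline{\pi(u)}\,\overline{\pi(w)})$, I concatenate to obtain a path $\delta\,(\overline{\pi(u)}\!\cdot\!\gamma)\,(\beta\!\cdot\!w)$ from $uw$ to $\overline{\pi(uw)}$ in $F_\Lambda$. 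Applying Lemma~\ref{lem:coboundary on transition graph} (which makes $S_\phi$ path-independent) and the additivity of $\tilde\phi$ along concatenations, I conclude
\[
S_\phi(uw) = S_\phi(\overline{\pi(u)}\,\overline{\pi(w)}) + S_\phi(u) + S_\phi(w),
\]
which is the first claim; the identity $\tilde{c}_\phi(u,w) = S_\phi(\overline{\pi(u)}\,\overline{\pi(w)})$ is then immediate from the definition.

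Finally, for the cocycle identity, substituting the definition $\tilde{c}_\phi(x,y) = S_\phi(xy) - S_\phi(x) - S_\phi(y)$ into both sides of \eqref{eq:tildec cocycle id} shows that each side collapses to $S_\phi(uwx) - S_\phi(u) - S_\phi(w) - S_\phi(x)$, so they agree. To verify that $\tilde{c}_\phi$ is a normalised cochain, I note that if either $u$ or $w$ lies in $E_\Lambda^0$, then $\overline{\pi(u)}\,\overline{\pi(w)}$ equals $\overline{\pi(w)}$ or $\overline{\pi(u)}$ respectively, a vertex of $F_\Lambda$, on which $S_\phi$ vanishes because $F_\Lambda$ has no directed cycles and $\tilde\phi$ of the empty path is $0$. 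Hence $\tilde{c}_\phi \in \Zcat2(E_\Lambda, A)$. The only nontrivial step is the construction of the concatenated path above, and the main technical point to be careful about is that the lifting of transitions preserves both the allowable-transition condition and the value of $\tilde\phi$, which is precisely where the local nature of the definition of $\tilde\phi$ pays off.
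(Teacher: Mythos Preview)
Your proof is correct and follows essentially the same approach as the paper: both construct a path from $uw$ to $\overline{\pi(uw)}$ by first lifting a path $u \to \overline{\pi(u)}$ (appending $w$), then lifting a path $w \to \overline{\pi(w)}$ (prepending $\overline{\pi(u)}$), and finally appending a path $\overline{\pi(u)}\,\overline{\pi(w)} \to \overline{\pi(uw)}$, invoking Lemma~\ref{lem:coboundary on transition graph} for path-independence. Your explicit verification that $\tilde{c}_\phi$ is normalised is a small addition the paper omits.
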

\begin{proof}
Fix a path $\alpha$ in $F_\Lambda$ from $u$ to $\overline{\pi(u)}$ and a path $\beta$ from $w$ to
$\overline{\pi(w)}$. There is a path $\alpha'$ from $uw$ to $\overline{\pi(u)}w$ with $|\alpha'| =
|\alpha|$ determined by $p(\alpha'_j) := p(\alpha_j)$ for all $j \le |\alpha|$. Likewise, there is
a path $\beta'$ from $\overline{\pi(u)}w$ to $\overline{\pi(u)}\,\overline{\pi(w)}$ with $|\beta'|
= |\beta|$ such that $p(\beta'_j) = \ell(u) + p(\beta_j)$ for all $j$. By definition of these
paths, we have
\begin{equation}\label{eq:tphi(alpha)=S(u)}
\tilde\phi(\alpha') = \tilde\phi(\alpha) = S_\phi(u)\quad\text{ and }\quad
\tilde\phi(\beta') = \tilde\phi(\beta) = S_\phi(w).
\end{equation}

Since $\overline{\pi(uw)}$ is the unique sink in $F_{\pi(uw)}$, there is a path $\gamma$ from
$\overline{\pi(u)}\,\overline{\pi(w)}$ to $\overline{\pi(uw)}$, and we have
\begin{equation}\label{eq:tphi(gamma)}
\tilde\phi(\gamma) = S_\phi(\overline{\pi(u)}\,\overline{\pi(w)}).
\end{equation}
Then $\gamma\beta'\alpha'$ is a path from $uw$ to $\overline{\pi(uw)}$, and hence
$\tilde\phi(\gamma\beta'\alpha') = S_\phi(uw)$. Using that $\tilde\phi$ is a functor, and then
equations \eqref{eq:tphi(alpha)=S(u)}~and~\eqref{eq:tphi(gamma)}, we now calculate:
\[
S_\phi(uw)
    = \tilde\phi(\gamma\beta'\alpha')
    = \tilde\phi(\gamma) + \tilde\phi(\beta') + \tilde\phi(\alpha')
    = S_{\phi}(\overline{\pi(u)}\,\overline{\pi(w)}) + S_\phi(w) + S_\phi(u),
\]
proving the first assertion of the lemma. The second assertion
follows immediately from the definition of $\tilde{c}_\phi$.

For the final assertion, we calculate
\begin{align*}
\tilde{c}_\phi(u,w) + \tilde{c}_\phi(uw,x)
    &= \big(S_{\phi}(uw) - S_\phi(u) - S_\phi(w)\big) + \big(S_\phi(uwx) - S_\phi(uw) - S_\phi(x)\big) \\
    &= S_\phi(uwx) - S_\phi(u) - S_\phi(w) - S_\phi(x).
\end{align*}
A similar calculation yields
\[
\tilde{c}_\phi(w,x) + \tilde{c}_\phi(u,wx) = S_\phi(uwx) - S_\phi(u) - S_\phi(w) - S_\phi(x)
\]
also.
\end{proof}

\begin{proof}[Proof of Theorem~\ref{thm:2-cocycle map}]
Fix $\phi \in \Zcub2(\Lambda, A)$. We show that $c_\phi$ satisfies (\ref{it:on
squares})~and~(\ref{it:already preferred}). For~(\ref{it:on squares}), suppose that $f \in
\Lambda^{e_i}$ and $g \in s(f)\Lambda^{e_j}$. If $i \le j$, then $fg = \overline{fg}$, so
$S_\phi(fg) = 0$ as required. If $i
> j$, we factorise $fg = g'f'$ where $d(g') = e_j$ and $d(f') = e_i$, and note that $\overline{fg}
= g'f'$, and $S_\phi(fg) = \phi(fg)$ by definition.

For~(\ref{it:already preferred}), suppose that
$\overline{\mu}\,\overline{\nu} = \overline{\mu\nu}$. Then
$c_\phi(\mu,\nu) = S_\phi(\overline{\mu\nu}) = 0$ by
definition. Since $\overline{r(\lambda)}\,\overline{\lambda} =
\overline{\lambda} = \overline{r(\lambda)\lambda}$ for all
$\lambda$, and similarly for $ \overline{\lambda}\,
\overline{s(\lambda)}$, (\ref{it:already preferred}) follows.

To see that $c_\phi$ is a cocycle, it remains to show that it
satisfies the cocycle identity
\[
c_\phi(\lambda_1, \lambda_2) + c_\phi(\lambda_1\lambda_2, \lambda_3)
    = c_\phi(\lambda_2, \lambda_3) + c_\phi(\lambda_1, \lambda_2\lambda_3)
\]
for $(\lambda_1, \lambda_2, \lambda_3) \in \Lambda^{*3}$. By Lemma~\ref{lem:csubphi}, we have
$c_\phi(\pi(u), \pi(w)) = \tilde{c}_\phi(u,w)$ for any $(u,w) \in E^{*2}_\Lambda$, and hence
\[
c_\phi(\lambda_1, \lambda_2) + c_\phi(\lambda_1\lambda_2, \lambda_3)
    = \tilde{c}_\phi(\overline{\lambda_1}, \overline{\lambda_2}) + \tilde{c}_\phi(\overline{\lambda_1}\,\overline{\lambda_2}, \overline{\lambda_3}),
\]
and
\[
c_\phi(\lambda_2, \lambda_3) + c_\phi(\lambda_1, \lambda_2\lambda_3)
    = \tilde{c}_\phi(\overline{\lambda_2}, \overline{\lambda_3}) + \tilde{c}_\phi(\overline{\lambda_1}, \overline{\lambda_2}\,\overline{\lambda_3}),
\]
so the cocycle identity for $c_\phi$ follows from the cocycle
identity~\eqref{eq:tildec cocycle id} for $\tilde{c}_\phi$.

To see that $\phi \mapsto c_\phi$ is a homomorphism, observe
that if $\phi_1, \phi_2 \in \Zcub{2}(\Lambda, A)$, then
$S_{\phi_1 + \phi_2} = S_{\phi_1} + S_{\phi_2}$, and hence
$\tilde{c}_{\phi_1 + \phi_2} = \tilde{c}_{\phi_1} + \tilde{c}_{\phi_2}$. It then
follows that $c_{\phi_1 + \phi_2} = c_{\phi_1} + c_{\phi_2}$
also.

Finally, we must show that the assignment $\phi \mapsto c_\phi$ carries coboundaries to
coboundaries. Fix $\phi \in \Bcub2(\Lambda,A)$ and $f \in \Ccub1(\Lambda, A)$ such that $\phi =
\dcub1 f$. By definition of $\dcub1 : \Ccub1(\Lambda,A) \to \Ccub2(\Lambda,A)$,
\[
\phi(\lambda) = f(F^0_2(\lambda)) + f(F^1_1(\lambda)) -
f(F^0_1(\lambda)) - f(F^1_2(\lambda))
\]
for all $\lambda \in Q_2(\Lambda)$.
In  particular, if $d(\alpha) = e_i$ and $d(\beta) = e_j$ with $i < j$ and
if $\alpha\beta = \eta\zeta$ with $d(\eta) = e_j$ and $d(\zeta) = e_i$,
then $\phi(\alpha\beta) = f(\alpha) + f(\beta) - f(\eta) - f(\zeta)$.

Define $b : E_\Lambda \to A$ by $b(w) := \sum^{\ell(w)}_{i=1} f(w_i)$. We show by induction on
$h(w)$ that $S_\phi(w) = b(\overline{\pi(w)}) -b(w)$ for all $w \in F^0_\Lambda$. This is trivial
when $h(w) = 0$. Now suppose that $S_\phi(w) = b(\overline{\pi(w)}) - b(w)$ whenever $h(w) \le n$,
and fix $w \in F^0_\Lambda$ with $h(w) = n+1$. Fix $\alpha \in \overline{\pi(w)} F_\Lambda w$, and
let $w' := r(\alpha_{n+1})$. Then Lemma~\ref{lem:coboundary on transition graph} implies that
\begin{equation}\label{eq:apply ind hyp}
\begin{split}
S_\phi(w) = \sum^{n+1}_{i=1} \tilde\phi(\alpha_i)
    &= \tilde{\phi}(\alpha_{n+1}) + \sum^{n}_{i=1} \tilde\phi(\alpha_i) \\
    &= S_\phi(\alpha_1, \dots, \alpha_n) + \tilde{\phi}(\alpha_{n+1})
    = b(\overline{\pi(w)}) - b(w') + \tilde{\phi}(\alpha_{n+1}),
\end{split}
\end{equation}
where the last equality follows from the inductive hypothesis.

Let $j := p(\alpha_{n+1})$, and let $\lambda :=
\pi(w_jw_{j+1})$. We have $\tilde{\phi}(\alpha_{n+1}) =
\phi(\lambda)$ by definition of $\tilde\phi$. Since
$\alpha_{n+1}$ is an allowed transition, we have $C(w_j) >
C(w_{j+1})$, and hence
\[
w_j = F^0_1(\lambda), \quad w_{j+1} = F^1_2(\lambda),\quad w'_j = F^0_2(\lambda),\quad\text{ and }\quad w'_{j+1} = F^1_1(\lambda).
\]
Hence $\tilde{\phi}(\alpha_{n+1}) = \phi(\lambda) = f(w'_j) +
f(w'_{j+1}) - f(w_j) - f(w_{j+1})$. Combining this
with~\eqref{eq:apply ind hyp}, we have
\begin{align*}
S_\phi(w)
    &= b(\overline{\pi(w)}) - b(w') + f(w'_j) + f(w'_{j+1}) - f(w_j) - f(w_{j+1}) \\
    &\textstyle= b(\overline{\pi(w)}) - \Big(\sum^{\ell(w)}_{i=1} f(w'_i)\Big) + f(w'_j) + f(w'_{j+1}) - f(w_j) - f(w_{j+1}).
\end{align*}
Since $w'_i = w_i$ for $i \not\in \{j,j+1\}$, it follows that
\[\textstyle
S_\phi(w) = b(\overline{\pi(w)}) - \sum^{\ell(w)}_{i=1} f(w_i) = b(\overline{\pi(w)}) - b(w).
\]

Define $g \in \Ccat1(\Lambda, A)$ by $g(\lambda) = - b(\overline{\lambda})$
for $\lambda \in \Lambda$.
We prove that $c_\phi =  \dcat1 g$.  Fix $(\mu,\nu) \in \Lambda^{*2}$.
Then
\begin{align*}
c_\phi(\mu,\nu)
    = S_\phi(\overline{\mu}\,\overline{\nu})
    = b(\overline{\mu\nu}) - b(\overline{\mu}\,\overline{\nu})
    &= b(\overline{\mu\nu}) - b(\overline{\mu}) - b(\overline{\nu}) \\
    &= - g(\mu\nu)  + g(\mu) + g(\nu)
    = (\dcat1 g)(\mu,\nu).
\end{align*}
Hence $c_\phi \in  \Bcat2(\Lambda, A)$, so $[\phi] \mapsto [c_\phi]$ is a homomorphism from
$\Hcub2(\Lambda,A)$ to $\Hcat2(\Lambda,A)$.
\end{proof}

\section{Central extensions of \texorpdfstring{$k$}{k}-graphs}\label{sec:extensions}

In this section we prove that the map $[\phi] \mapsto [c_\phi]$ of Theorem~\ref{thm:2-cocycle map}
is an isomorphism (see Theorem~\ref{thm:2-cohomology iso}). To prove this, we introduce the notion
of a central extension of a $k$-graph $\Lambda$ by an abelian group $A$. We show that the
collection of isomorphism classes of central extensions forms a group $\Ext(\Lambda,A)$ which is
isomorphic to $\Hcat2(\Lambda,A)$ (cf. \cite[Theorem 2.3]{BW} and
\cite[Proposition~I.1.14]{Renault1980}).  We show that each central extension is isomorphic to one
obtained from a cubical $2$-cocycle, which is unique modulo coboundaries.

Given a $k$-graph $\Lambda$ and an abelian group $A$, the set $\Lambda^0 \times A$ becomes a category with $r(v,a) = s(v,a) = v$ and $(v,a)(v,b) = (v, a+b)$.

\begin{dfn}\label{dfn:extension}
Let $A$ be an abelian group, and let $\Lambda$ be a $k$-graph.
An \emph{extension of $\Lambda$ by $A$} is a sequence
\[
\Xx  :  \Lambda^0 \times A \stackrel{\iota}{\longrightarrow}
X \stackrel{q}{\longrightarrow} \Lambda
\]
consisting of a small category $X$, a functor $\iota : \Lambda^0 \times A \to X$, and a surjective
functor $q : X \to \Lambda$ such that $q(\iota(v,a)) = v$ for all $v \in \Lambda^0$ and $a \in A$,
and such that whenever $q(x) = q(y)$, there exists a unique $a(x,y) \in A$ such that $x =
\iota(r(q(x)), a(x,y))y$. We say that $\Xx$ is a \emph{central extension} if it satisfies
$\iota(r(q(x)),a) x = x \iota(s(q(x)),a)$ for all $x \in X$ and $a \in A$.

As we do for $k$-graphs, for $x \in X$ we write $r(x)$ for $\id_{\cod(x)}$ and $s(x)$ for
$\id_{\dom(x)}$.
\end{dfn}

\begin{rmk}\label{rmk:extension manipulations}
Let $\Xx$ be an extension of a $k$-graph $\Lambda$ by an abelian group $A$. Then 
$\iota$ is injective and induces a bijection between $\Lambda^0$ and 
$\Obj(X)$. Since $q(\iota(v,a)) = v$ for all $(v,a) \in \Lambda^0 \times A$, we have 
$q(x) = q(y)$ if \emph{and only if} there exists $a \in A$ such that $\iota(r(x), a) x = 
y$. We then have $q(r(x)) = q(\iota(r(x), a)) q(r(x)) = q(\iota(r(x), a)r(x)) = 
q(r(y))$. Since $q$ is injective on objects, it follows that $q(x) = q(y)$ implies $r(x) 
= r(y)$ (and similarly $s(x) = s(y)$) for all $x,y \in X$.
\end{rmk}

\begin{ntn}
Given an extension $\Xx$ of $\Lambda$ by $A$, it is unambiguous, and frequently convenient, to
write $a \cdot x$ for $\iota_X(r(x),a)x$ and $x \cdot a$ for $x \iota_X(s(x),a)$. In this notation,
$q(x) = q(y)$ if and only if $x = a(x,y) \cdot y$, and $\Xx$ is a central extension precisely if $a
\cdot x = x \cdot a$ for all $x \in X$ and $a \in A$. We implicitly identify $\Lambda^0$ with the
identity morphisms in $X$ via the bijection $v \mapsto \iota_X(v, 0)$. This allows us to write $a
\cdot v$ or $v \cdot a$ (as appropriate) for $\iota_X(v,a)$. With this convention, we also have
$q_X(v) = v$, and for $x \in X$, we regard $r(x)$ and $s(x)$ as elements of $\Lambda^0$.

\end{ntn}

That $\iota$ is a functor implies that $a \cdot (b\cdot x) = (a+b)\cdot x$ for all $a,b \in A$ and
$x \in X$. Since composition in $X$ is also associative, we have identities like $x(a \cdot y) =
(x\cdot a)y = (a\cdot x) y = a\cdot(xy)$. In particular, the expression $a\cdot xy$ is unambiguous.

\begin{lem}\label{lem:a(x,y)=a(xz,yz)}
Let $\Lambda$ be a $k$-graph, $A$ an abelian group, and
\[
\Xx : \Lambda^0 \times A \to X \to \Lambda
\]
a central extension of $\Lambda$ by $A$. If $(w,x,z), (w,y,z) \in X^{*3}$ and $q(x) = q(y)$, then
$a(x,y) = a(wxz,wyz)$. In particular, $a(xz,yz) = a(x,y) = a(wx,wy)$.

Moreover given elements $x_1, \dots, x_n \in X$ such that
$q(x_1) = q(x_2) = \cdots = q(x_n)$, we have $a(x_1, x_n) =
\sum^{n-1}_{i=1} a(x_i, x_{i+1})$.
\end{lem}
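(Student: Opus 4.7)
The plan is to unpack the defining property $x = a(x,y)\cdot y$ and then exploit centrality to push the $A$-factor past $w$ on the left and $z$ on the right, after which uniqueness of $a(wxz,wyz)$ forces the desired equality. The iterated statement then falls out of induction using additivity of the $A$-action.

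More concretely, for the first statement I would start from the equation $x = a(x,y)\cdot y$ coming from $q(x)=q(y)$. I would then compute $wxz$ by substituting this in and, using the identities $w(a\cdot y) = (a\cdot w)y = a\cdot(wy)$ (which follow from functoriality of $\iota$, associativity of composition in $X$, and, crucially, centrality $\iota(r(y),a)w = w\iota(s(w),a)$ since $r(y)=s(w)$), arrive at $wxz = a(x,y)\cdot(wyz)$. Since $q(wxz) = q(w)q(x)q(z) = q(w)q(y)q(z) = q(wyz)$, the definition of $a(wxz,wyz)$ gives $wxz = a(wxz,wyz)\cdot(wyz)$, and the uniqueness clause in Definition~\ref{dfn:extension} forces $a(wxz,wyz) = a(x,y)$. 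The two special cases $a(xz,yz) = a(x,y) = a(wx,wy)$ follow by taking $w$ or $z$ to be the appropriate identity morphism (so that the composability hypotheses hold trivially).

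For the second statement, I would induct on $n$. The base case $n=2$ is tautological. For the inductive step, assume $x_1 = \bigl(\sum_{i=1}^{n-2} a(x_i,x_{i+1})\bigr)\cdot x_{n-1}$. Combining this with $x_{n-1} = a(x_{n-1},x_n)\cdot x_n$ and using $a\cdot(b\cdot x) = (a+b)\cdot x$, we obtain
\[
x_1 = \Bigl(\sum_{i=1}^{n-1} a(x_i,x_{i+1})\Bigr)\cdot x_n.
\]
Since also $x_1 = a(x_1,x_n)\cdot x_n$ by definition, uniqueness again yields $a(x_1,x_n) = \sum_{i=1}^{n-1} a(x_i,x_{i+1})$.

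There is no real obstacle here; the argument is a direct manipulation using centrality and uniqueness. The only point that warrants care is checking that all the composability conditions persist when sliding the $A$-factors around (which is automatic from Remark~\ref{rmk:extension manipulations}, since $q(x)=q(y)$ implies $r(x)=r(y)$ and $s(x)=s(y)$, so the compositions $wxz$ and $wyz$ make sense simultaneously).
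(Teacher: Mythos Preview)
Your proposal is correct and follows essentially the same approach as the paper: both arguments use centrality to rewrite $wxz = a(x,y)\cdot wyz$ and then invoke uniqueness of $a(wxz,wyz)$, and both handle the iterated statement by the same induction on $n$ using $a\cdot(b\cdot x) = (a+b)\cdot x$.
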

\begin{proof}
Using that $\Xx$ is a central extension, we calculate:
\[
a(x,y) \cdot wyz = w(a(x,y)\cdot y)z = wxz.
\]
The first assertion of the lemma therefore follows from uniqueness of $a(wxz,wyz)$. The second
assertion follows from the first applied with $w = r(x)$ and with $z = s(x)$.

The final assertion follows from a straightforward induction: it is trivial when $n = 2$. Suppose
as an inductive hypothesis that it holds for $n \le N$, and fix $x_1, \dots x_{N+1}$ with $q(x_i) =
q(x_j)$ for all $i,j$. Then
\begin{align*}
\Big(\sum^{N}_{i=1} a(x_i, x_{i+1})\Big) \cdot x_{N+1}
    &= \Big(\sum^{N-1}_{i=1} a(x_i, x_{i+1})\Big) \cdot a(x_N, x_{N+1})\cdot x_{N+1} \\
    &= \Big(\sum^{N-1}_{i=1} a(x_i, x_{i+1})\Big) \cdot x_N
    = x_1
\end{align*}
by the inductive hypothesis.
\end{proof}

\begin{ntn}\label{ntn:twist product}
Let $\Lambda$ be a $k$-graph, let $A$ be an abelian group, and
let
\[
\Xx : \Lambda^0 \times A \stackrel{\iota_X}{\longrightarrow} X \stackrel{q_X}{\longrightarrow} \Lambda
    \quad\text{ and }\quad
\Yy : \Lambda^0 \times A \stackrel{\iota_Y}{\longrightarrow} Y \stackrel{q_Y}{\longrightarrow} \Lambda
\]
be central extensions of $\Lambda$ by $A$. Let $X *_\Lambda Y
:=\{(x,y) \in X \times Y : q_X(x) = q_Y(y)\}$. Define a
relation $\sim_A$ on $X *_\Lambda Y$ by
$(x,y) \sim_A (-a\cdot x, a\cdot y)$ for all
$(x,y) \in X *_\Lambda Y$ and $a \in A$.
\end{ntn}

\begin{lem}\label{lem:er on product}
With $\Lambda$, $A$, $\Xx$ and $\Yy$ as in
Notation~\ref{ntn:twist product}, the relation $\sim_A$ is an
equivalence relation, and satisfies
\[
    (x \cdot a,y) = (a\cdot x, y) \sim_A (x, a\cdot y) = (x,y\cdot a)
\]
for all $(x,y) \in X *_\Lambda Y$ and all $a \in A$.
\end{lem}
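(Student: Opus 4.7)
The plan is to treat this as a direct verification: both claims follow from unwinding the definition of $\sim_A$ together with the centrality assumption, so the work is entirely bookkeeping rather than combinatorial.

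First I would check that $\sim_A$ is well-defined on $X *_\Lambda Y$. Given $(x,y) \in X *_\Lambda Y$ and $a \in A$, the elements $-a \cdot x \in X$ and $a \cdot y \in Y$ have the same $q$-images as $x$ and $y$ respectively, so $q_X(-a\cdot x) = q_X(x) = q_Y(y) = q_Y(a \cdot y)$, and $(-a\cdot x, a\cdot y) \in X *_\Lambda Y$. Reflexivity is obtained by taking $a = 0$, using the implicit identification $\iota_X(r(x),0) = r(x)$ so that $0\cdot x = x$ (and similarly on $Y$). For symmetry, suppose $(x_1,y_1) \sim_A (x_2,y_2)$ so that $x_2 = -a\cdot x_1$ and $y_2 = a\cdot y_1$ for some $a \in A$. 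Then $-(-a)\cdot x_2 = a \cdot (-a \cdot x_1) = 0 \cdot x_1 = x_1$ and analogously $(-a)\cdot y_2 = y_1$, witnessing $(x_2,y_2) \sim_A (x_1,y_1)$ via the parameter $-a$. For transitivity, if additionally $(x_2,y_2) \sim_A (x_3,y_3)$ via parameter $b$, then $x_3 = -b\cdot(-a\cdot x_1) = -(a+b)\cdot x_1$ and $y_3 = b\cdot(a\cdot y_1) = (a+b)\cdot y_1$, so $(x_1,y_1)\sim_A (x_3,y_3)$ via parameter $a+b$. Here I am using the functoriality of $\iota_X$ and $\iota_Y$, which gives the identities $a\cdot(b\cdot x) = (a+b)\cdot x$ noted just after the definition of the notation $a\cdot x$.

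For the second claim, the outer equalities $x\cdot a = a\cdot x$ and $a\cdot y = y\cdot a$ are exactly the centrality condition in Definition~\ref{dfn:extension}. For the middle equivalence $(a\cdot x, y) \sim_A (x, a\cdot y)$, I apply the defining relation of $\sim_A$ to the pair $(a\cdot x, y)$ with parameter $a$: this gives
\[
(a\cdot x, y) \sim_A (-a \cdot (a \cdot x),\, a\cdot y) = (0\cdot x,\, a\cdot y) = (x,\, a\cdot y),
\]
as required.

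I do not anticipate any genuine obstacle: the only subtlety is keeping the conventions straight, in particular remembering that the identification $v \leftrightarrow \iota_X(v,0)$ makes $0\cdot x = x$, and using centrality in exactly one place to swap $a\cdot x$ with $x\cdot a$. No appeal to the deeper results on extensions (such as Lemma~\ref{lem:a(x,y)=a(xz,yz)}) is needed for this statement.
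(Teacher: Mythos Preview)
Your proof is correct and follows essentially the same direct verification as the paper: reflexivity via $a=0$, symmetry via parameter $-a$, transitivity via parameter $a+b$, and the final displayed relation by applying the definition of $\sim_A$ with parameter $a$ together with centrality. Your version is in fact slightly more careful (you check that $(-a\cdot x, a\cdot y)$ remains in $X *_\Lambda Y$), but otherwise the arguments coincide.
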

\begin{proof}
The relation $\sim_A$ is clearly reflexive. For symmetry,
observe that
\[
(-a\cdot x, a\cdot y) \sim_A \big(a\cdot(-a\cdot x), -a\cdot(a \cdot y) \big) = (x,y).
\]
For transitivity, observe that
\[
(-b \cdot (-a \cdot x), b \cdot (a \cdot y))
    = (-(a+b)\cdot x, (a+b)\cdot y) \sim_A (x,y).
\]
For the final assertion, we first establish the middle equality
by calculating
\[
(a\cdot x, y) \sim (-a \cdot (a\cdot x), a\cdot y) = (x, a \cdot y).
\]
the other equalities follow because $\Xx$ and $\Yy$ are central
extensions.
\end{proof}

\begin{lem}\label{lem:extension ops}
With the hypotheses of Lemma~\ref{lem:er on product}, let $Z(X,Y) := X *_\Lambda Y / \sim_A$, and
for $(x,y) \in X *_\Lambda Y$, let $[x,y]$ denote its equivalence class in $Z(X,Y)$. There are
well-defined maps $r,s : Z(X,Y) \to Z(X,Y)$ such that
\[
r([x,y]) = \big[r(x), r(y)\big]  \quad\text{ and }\quad  s([x,y]) = \big[s(x), s(y)\big]
        \quad\text{ for all $(x,y) \in X *_\Lambda Y$.}
\]
There is also a well-defined composition determined by $[x_1, y_1][x_2, y_2] = [x_1x_2, y_1y_2]$
whenever $s([x_1, y_1]) = r([x_2, y_2])$.
\end{lem}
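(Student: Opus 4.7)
The plan is to check three things in sequence: that range and source descend to $\sim_A$-classes, that the stated composability condition on classes forces the representatives to be literally composable in $X$ and $Y$, and finally that the proposed composition is independent of the choice of representatives.

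First I would treat $r$ and $s$. Recall that each identity of $X$ has the form $\iota_X(v,0)$ for a unique $v\in\Lambda^0$, and similarly in $Y$. For $(x,y)\in X*_\Lambda Y$ and $a\in A$, direct computation using that $\iota_X$ is a functor gives $r(-a\cdot x)=r(\iota_X(r(x),-a)x)=r(x)$, and likewise $r(a\cdot y)=r(y)$; so $[r(x),r(y)]$ depends only on $[x,y]$, and $(r(x),r(y))\in X*_\Lambda Y$ because $q_X,q_Y$ are functors with $q_X(x)=q_Y(y)$. The same argument works for $s$.

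Next, I would show that $s([x_1,y_1])=r([x_2,y_2])$ is equivalent to $s(x_1)=r(x_2)$ and $s(y_1)=r(y_2)$ in $X$ and $Y$. The hypothesis says there is $a\in A$ with $-a\cdot s(x_1)=r(x_2)$ and $a\cdot s(y_1)=r(y_2)$. Writing $s(x_1)=\iota_X(s(q_X(x_1)),0)$, the first equation becomes $\iota_X(s(q_X(x_1)),-a)=\iota_X(r(q_X(x_2)),0)$. Since $\iota_X$ is injective on $\Lambda^0\times A$ (Remark~\ref{rmk:extension manipulations}), this forces $a=0$ and $s(q_X(x_1))=r(q_X(x_2))$, hence $s(x_1)=r(x_2)$; symmetrically $s(y_1)=r(y_2)$. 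Conversely, equality obviously implies $\sim_A$-equivalence. Consequently, whenever $s([x_1,y_1])=r([x_2,y_2])$, the products $x_1x_2\in X$ and $y_1y_2\in Y$ are defined, and $(x_1x_2,y_1y_2)\in X*_\Lambda Y$ because $q_X,q_Y$ are functors.

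Finally I would verify that composition is independent of representatives. Suppose $(x_1,y_1)\sim_A(x_1',y_1')$ via $a\in A$ and $(x_2,y_2)\sim_A(x_2',y_2')$ via $b\in A$, so $x_1'=-a\cdot x_1$, $y_1'=a\cdot y_1$, $x_2'=-b\cdot x_2$, $y_2'=b\cdot y_2$. Source/range agree with those of the original representatives by the first paragraph, so $x_1'x_2'$ and $y_1'y_2'$ are defined. Using centrality of $\Xx$ (which gives $x_1\cdot(-b)=(-b)\cdot x_1$) and the identity $(-a)\cdot\bigl((-b)\cdot z\bigr)=-(a+b)\cdot z$ noted before Lemma~\ref{lem:a(x,y)=a(xz,yz)}, I would compute
\[
x_1'x_2'=\bigl(\iota_X(r(x_1),-a)x_1\bigr)\bigl(\iota_X(r(x_2),-b)x_2\bigr)
       =\iota_X(r(x_1),-(a+b))\,x_1x_2=-(a+b)\cdot x_1x_2,
\]
and analogously $y_1'y_2'=(a+b)\cdot y_1y_2$ using centrality of $\Yy$. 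Hence $(x_1'x_2',y_1'y_2')\sim_A(x_1x_2,y_1y_2)$, giving a well-defined composition.

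The main obstacle is the middle step: one has to exploit the fact that identities in $X$ are precisely $\iota_X(v,0)$ and use injectivity of $\iota$ to convert equality of $\sim_A$-classes of source/range pairs into genuine equality, so that the composition makes sense at all. Once that is done, centrality is exactly the tool needed to commute the $A$-factors past morphisms and close out the well-definedness argument.
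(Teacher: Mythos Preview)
Your proof is correct and follows essentially the same approach as the paper: well-definedness of $r,s$ comes from the fact that $A$-shifts preserve range and source, and well-definedness of composition comes from centrality, via the computation $(-a\cdot x_1)(-b\cdot x_2)=-(a+b)\cdot x_1x_2$. Your middle step---using injectivity of $\iota_X$ to show that $s([x_1,y_1])=r([x_2,y_2])$ forces $a=0$ and hence genuine composability of the representatives---is a detail the paper omits entirely, so your argument is in fact more complete.
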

\begin{proof}
Suppose that $(x,y) \sim_A (x',y')$, say $(x,y) = (-a\cdot x', a\cdot y')$. Then $q_X(x) = q_X(x')$
and $q_Y(y) = q_Y(y')$, so $(r(x), r(y)) = (r(x'), r(y'))$ by Remark~\ref{rmk:extension
manipulations}. So $r : Z(X,Y) \to Z(X,Y)$ is well defined. A similar argument shows that $s$ is
well defined.

To see that composition is well defined, fix $a,b \in A$. Since $X$ is a central extension, we
calculate:
\[
(-a\cdot x_1) (-b\cdot x_2)
    = -a\cdot (-b\cdot x_1) x_2
    = -(a+b)\cdot x_1x_2.
\]
Similarly $(a \cdot y_1)(b\cdot y_2) = (a+b)\cdot y_1y_2$. In
particular,
\[
((-a \cdot x_1)(-b\cdot x_2), (a\cdot y_1)(b\cdot y_2)) \sim_A (x_1x_2,y_1y_2). \qedhere
\]
\end{proof}

If $[x,y] = [x',y']$ in $Z(X,Y)$, then $x' = -a\cdot x$ for some $a \in A$, so $q(x') = q(x)$. So
we may define $\iota = \iota_{Z(X,Y)} : \Lambda^0 \times A \to Z(X,Y)$ and $q = q_{Z(X,Y)} : Z(X,Y)
\to \Lambda$ by
\begin{equation}\label{eq:ext sum maps}
\iota(v,a) := [\iota_X(v,a), \iota_Y(v, 0)]\qquad\text{ and }\qquad
q([x,y]) := q_X(x).
\end{equation}
Lemma~\ref{lem:er on product} implies that
\begin{equation}\label{eq:a on either side}
\begin{split}
\iota(v,a) = [\iota_X(v,a), \iota_Y(v, 0)]
    &= [\iota_X(v,a), \iota_Y(v,-a)\iota_Y(v,a)]\\
    &= [\iota_X(v,-a)\iota_X(v,a),\iota_Y(v,a)] = [\iota_X(v,0), \iota_Y(v,a)].
\end{split}
\end{equation}

\begin{lem}\label{lem:ext addition}
Let $\Lambda$ be a $k$-graph, let $A$ be an abelian group, and let $X$ and $Y$ be central
extensions of $\Lambda$ by $A$ as in Notation~\ref{ntn:twist product}. Then $Z(X,Y)$ is a small
category under the operations described in Lemma~\ref{lem:extension ops} and with the identity
morphism corresponding to an object $v \in \Lambda^0$ given by $\id_v = [v, v]$.

Let $\iota := \iota_{Z(X,Y)}$ and $q := q_{Z(X,Y)}$ be as in~\eqref{eq:ext sum maps}. Then
$q([x,y]) = q_Y(y)$ for all $[x,y] \in Z(X,Y)$, and
\[
\Xx + \Yy : \Lambda^0 \times A \stackrel{\iota}{\longrightarrow} Z(X,Y) \stackrel{q}{\longrightarrow} \Lambda
\]
is a central extension of $\Lambda$ by $A$ and satisfies
\begin{equation}\label{eq:a-maps add}
a([x,y],[x',y']) = a(x,x') + a(y,y')\text{ whenever }q([x,y]) =
q([x',y']).
\end{equation}
Finally, $\iota_{Z(X,Y)}(v,a) = [\iota_X(v,b), \iota_X(v,a - b)]$ for all $v \in \Lambda^0$ and
$a,b \in A$.
\end{lem}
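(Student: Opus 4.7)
I would organize the argument as four steps: first, verify that $Z(X,Y)$ is a small category with the stated identities; second, check functoriality of $q$ and $\iota$ together with $q(\iota(v,a)) = v$; third, establish the uniqueness of $a([x,y],[x',y'])$ together with the additivity formula~\eqref{eq:a-maps add} and centrality; finally, derive the alternative expression for $\iota_{Z(X,Y)}(v,a)$. Most of the work is descent through $\sim_A$; the only computation with real content is the uniqueness property.

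For the first step, the key observation is that if $(x,y) \in X *_\Lambda Y$, then $q_X(x) = q_Y(y)$, so Remark~\ref{rmk:extension manipulations} forces $r(x) = r(y)$ (and $s(x) = s(y)$) as elements of $\Lambda^0$. Hence, identifying $\Lambda^0$ with identity morphisms via $\iota$, the element $\id_v = [\iota_X(v,0), \iota_Y(v,0)]$ with $v = r(q_X(x))$ makes sense, and composing on the left with $[x,y]$ returns $[\iota_X(v,0)\cdot x, \iota_Y(v,0)\cdot y] = [x,y]$. Associativity of composition descends coordinatewise from associativity in $X$ and $Y$.

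For steps two and three, the identity $q([x,y]) = q_X(x) = q_Y(y)$ is built into the definition of $X *_\Lambda Y$, so $q$ projects to either coordinate; functoriality of $q$ and $\iota$ and the identity $q(\iota(v,a)) = v$ are routine. The main point is the uniqueness condition. Given $q([x,y]) = q([x',y'])$, both $a(x,x')$ in $X$ and $a(y,y')$ in $Y$ are defined, and I would write
\[
[x,y] = [a(x,x')\cdot x',\, a(y,y')\cdot y'].
\]
Applying Lemma~\ref{lem:er on product} with parameter $-a(y,y')$ to shift the $A$-action from the second coordinate into the first rewrites this as
\[
[x,y] = [(a(x,x')+a(y,y'))\cdot x',\, y'] = (a(x,x')+a(y,y'))\cdot [x',y'].
\]
This proves existence of $a([x,y],[x',y'])$ and gives formula~\eqref{eq:a-maps add} in one stroke; uniqueness follows from uniqueness of $a(x,x')$ in $X$, together with the observation that if $b \cdot [x',y'] = [x',y']$ then $(b\cdot x', y') \sim_A (x',y')$, which via freeness of the $A$-action on $X$ forces $b = 0$. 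Centrality is then immediate from Lemma~\ref{lem:er on product}: $a\cdot [x,y] = [a\cdot x, y] = [x, a\cdot y] = [x,y]\cdot a$.

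For the final assertion (reading the second $\iota_X$ as $\iota_Y$, since the second coordinate of the bracket must live in $Y$), I would apply $\sim_A$ with parameter $c = b-a$ to the pair $(\iota_X(v,b), \iota_Y(v,a-b))$: this produces $(\iota_X(v,b-c), \iota_Y(v,a-b+c)) = (\iota_X(v,a), \iota_Y(v,0))$, which represents $\iota_{Z(X,Y)}(v,a)$ by definition. The main potential obstacle throughout is bookkeeping the signs and the direction of the $A$-action transfer under $\sim_A$, but Lemma~\ref{lem:er on product} and equation~\eqref{eq:a on either side} handle this uniformly, so no step requires genuinely new input beyond the definitions.
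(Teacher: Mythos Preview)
Your proposal is correct and follows essentially the same route as the paper's proof: both treat the category axioms as routine, read off $q([x,y]) = q_Y(y)$ from the fibre-product definition, verify the extension property by exhibiting $a(x,x') + a(y,y')$ as the required element (the paper does this via the computation $\iota(r(x),a+b)[x',y'] = [x,y]$ using~\eqref{eq:a on either side}, you via Lemma~\ref{lem:er on product}), and derive the final identity from~\eqref{eq:a on either side}. One small gloss in your uniqueness argument: showing that $b\cdot[x',y'] = [x',y']$ forces $b=0$ actually uses freeness of the $A$-action on \emph{both} $X$ and $Y$ (the $\sim_A$-witness $d$ must vanish by freeness on $Y$, and then $b=0$ follows by freeness on $X$), exactly as in the paper's explicit unwinding; and you are right that the second $\iota_X$ in the final assertion is a typo for $\iota_Y$.
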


\begin{rmk}
The rather suggestive notation $\Xx + \Yy$ is justified by
Example~\ref{eg:cocycle extension} and Proposition~\ref{prp:Ext
group} below.
\end{rmk}

\begin{proof}[Proof of Lemma~\ref{lem:ext addition}]
Routine checks show that $Z(X,Y)$ is a category. It is small because $X$ and $Y$ are.

That $q(x,y) = q_Y(y)$ for all $(x,y) \in X *_\Lambda Y$ is
just a combination of the definitions of the map $q$ and the
space $X *_\Lambda Y$. Using this it is routine to see that
$\iota$ and $q$ are functors (the operations in $X *_\Lambda Y$
being coordinate-wise). For $v \in \Lambda^0$, we have
\[
q(\iota(v,0)) = q([\iota_X(v,0), \iota_Y(v,0)]) = q_X(\iota_X(v,0)) = v
\]
since $\Xx$ is an extension. Moreover, if $q([x,y]) = q([x',y'])$, then $q_X(x) = q_X(x') = q_Y(y')
= q_Y(y)$, so there exists a unique element $a = a(x,x') \in A$ such that $x = a\cdot x'$, and a
unique $b = b(y,y')$ such that $y = b\cdot y'$. Applying~\eqref{eq:a on either side} in the second equality, we calculate:
\begin{align*}
\iota(r(x),a+b)[x',y']
    &= \iota(r(x), a)\iota(r(x),b)[x',y'] \\
    &= [\iota_X(r(x),a),\iota_Y(r(x),0)] [\iota_X(r(x),0), \iota_Y(r(x),b)][x',y']\\
    &= [(a+0)\cdot x', (0+b)\cdot y']\\
    &= [x,y].
\end{align*}
For uniqueness of $a+b$,
suppose that $\iota(r(x), c)[x',y'] = [x,y]$. Then $(c\cdot x',y')
\sim_A (x,y)$, so there exists $d \in A$ such that $c \cdot x'
= -d \cdot x$ and $y' = d \cdot y$. Hence $y = -d\cdot y'$ and
uniqueness of $b(y,y')$ forces $b = b(y,y') = -d$; and then
\[
x = d\cdot c\cdot x' = (c - b)\cdot x',
\]
and uniqueness of $a(x,x')$ forces $a = a(x,x') = c - b$. Hence $c = a(x,x') + b(y,y')$ as
required. Thus $\Xx + \Yy$ is an extension of $\Lambda$ by $A$. It is central because each of $\Xx$
and $\Yy$ is central.

The final assertion follows from~\eqref{eq:a on either side}.
\end{proof}

We next construct from each $c \in \Zcat2(\Lambda, A)$ a central extension of $\Lambda$ by $A$ by
twisting the composition in $\Lambda \times A$.

\begin{ntn}
Let $\Lambda$ be a $k$-graph, let $A$ be an abelian group, and fix $c \in \Zcat2(\Lambda,A)$. Let
$X_c ( \Lambda, A)$ be the small category with underlying set and structure maps identical to the
cartesian-product category $\Lambda \times A$ and with composition defined by
\[
(\mu,a)(\nu,b) := (\mu\nu, c(\mu,\nu) + a + b).
\]
We will usually suppress the $\Lambda$ and $A$ in our notation, and write $X_c$ for $X_c(\Lambda,
A)$.
\end{ntn}

\begin{example}\label{eg:cocycle extension}
Let $\Lambda$ be a $k$-graph, let $A$ be an abelian group, and fix $c \in \Zcat2(\Lambda,A)$.
Define $\iota : \Lambda^0 \times A \to \extshrt{c}{\Lambda}{A}$ by inclusion of sets, and define $q
: \extshrt{c}{\Lambda}{A} \to \Lambda$ by $q(\lambda,a) := \lambda$. Then
\[
\Extshrt{c}{\Lambda}{A} : \Lambda^0 \times A \stackrel{\iota}{\longrightarrow} \extshrt{c}{\Lambda}{A} \stackrel{q}{\longrightarrow} \Lambda
\]
is a central extension of $\Lambda$ by $A$.

In particular, the trivial cocycle $0 : \Lambda^{*2} \to A$ given by $0(\mu,\nu) = 0$ for all
$\mu,\nu$ gives rise to the \emph{trivial extension} $\Extshrt{0}{\Lambda}{A} : \Lambda^0 \times A
\to \extshrt{0}{\Lambda}{A} \to \Lambda$, where $X_0=\Lambda \times A$ is the cartesian-product
category (with un-twisted composition).
\end{example}

\begin{dfn}\label{dfn:ext isomorphism}
Let $\Lambda$ be a $k$-graph, and let $A$ be an abelian group. We say that two extensions
\[
\Xx : \Lambda^0 \times A \stackrel{\iota_X}{\longrightarrow} X \stackrel{q_X}{\longrightarrow} \Lambda\quad\text{ and }\quad
\Yy : \Lambda^0 \times A \stackrel{\iota_Y}{\longrightarrow} Y \stackrel{q_Y}{\longrightarrow} \Lambda
\]
of $\Lambda$ by $A$ are \emph{isomorphic} if there is a bijective functor $f : X \to Y$ such that
the following diagram commutes.
\[\begin{tikzpicture}
    \node[rectangle, inner sep=1pt] (A) at (-1.5,0) {$\Lambda^0 \times A$};
    \node (X) at (0,1) {$X$};
    \node (Y) at (0,-1) {$Y$};
    \node[rectangle, inner sep=1pt] (L) at (1.5,0) {$\Lambda$};
    \draw[-latex] (A) -- (X) node[pos=0.5,anchor=south east,inner sep=1pt] {$\iota_X$};
    \draw[-latex] (A) -- (Y) node[pos=0.5,anchor=north east,inner sep=1pt] {$\iota_Y$};
    \draw[-latex] (X) -- (L) node[pos=0.5,anchor=south west,inner sep=1pt] {$q_X$};
    \draw[-latex] (Y) -- (L) node[pos=0.5,anchor=north west,inner sep=1pt] {$q_Y$};
    \draw[-latex] (X.south)--(Y.north) node[pos=0.5,anchor=east,inner sep=1.5pt] {$f$};
%    \draw[-latex] (X.260)--(Y.100) node[pos=0.5,anchor=east,inner sep=1pt] {$f$};
%    \draw[-latex] (Y.80)--(X.280) node[pos=0.5,anchor=west,inner sep=1pt] {$g$};
\end{tikzpicture}\]
We call $f$ an isomorphism of $\Xx$ with $\Yy$. We write $\Ext(\Lambda,A)$ for the set of
isomorphism classes of extensions of $\Lambda$ by $A$.
\end{dfn}

Fix a central extension
\[
\Xx : \Lambda^0 \times A \stackrel{\iota}{\longrightarrow} X \stackrel{q}{\longrightarrow} \Lambda.
\]
Let $\overline{X} := \{\overline{x} : x \in X\}$ be a copy of the category $X$. Define
$\overline\iota : \Lambda^0 \times A \to \overline{X}$ by $\overline\iota(v,a) :=
\overline{\iota(v,-a)}$. Define $\overline{q} : \overline{X} \to \Lambda$ by
$\overline{q}(\overline{x}) = q(x)$. Then
\[
-\Xx : \Lambda^0 \times A \stackrel{\overline\iota}{\longrightarrow} \overline{X}
    \stackrel{\overline{q}}{\longrightarrow} \Lambda
\]
is also a central extension of $\Lambda$ by $A$. Observe that $a \cdot \overline{x} = \overline{-a
\cdot x}$ for $a \in A$ and $x \in X$.

\begin{prop}\label{prp:Ext group}
Let $\Lambda$ be a $k$-graph, and let $A$ be an abelian group. Then the formula $[\Xx] + [\Yy] :=
[\Xx + \Yy]$ determines a well-defined operation under which $\Ext(\Lambda,A)$ is an abelian group
with identity element $[\Extshrt{0}{\Lambda}{A}]$, the class of the trivial extension. Moreover,
$-[\Xx] = [-\Xx]$ for each extension $\Xx$ of $\Lambda$ by $A$.
\end{prop}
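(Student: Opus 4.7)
The plan is to verify each group axiom for $\Ext(\Lambda, A)$ by exhibiting an explicit isomorphism of central extensions in the sense of Definition~\ref{dfn:ext isomorphism}. There are five points to establish: well-definedness of the operation on isomorphism classes, that $[\Extshrt{0}{\Lambda}{A}]$ is a two-sided identity, commutativity, that $[-\Xx]$ inverts $[\Xx]$, and associativity.

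The two easy steps come first. For well-definedness, given isomorphisms of extensions $f : X \to X'$ and $g : Y \to Y'$, the map $[x, y] \mapsto [f(x), g(y)]$ respects $\sim_A$ because $f$ and $g$ intertwine $\iota$ and hence commute with the $A$-action, and it is routinely a functor intertwining the structure maps of $Z(X, Y)$ with those of $Z(X', Y')$. Commutativity follows from $[x, y] \mapsto [y, x]$, which preserves $\sim_A$ by symmetry of its definition.

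For the identity axiom, I would define $h : Z(X, \extshrt{0}{\Lambda}{A}) \to X$ by $h([x, (\lambda, a)]) := a \cdot x$; well-definedness on $\sim_A$-classes is immediate from centrality, and $h$ is a bijective functor intertwining $\iota$ and $q$ since composition in the trivial extension is untwisted. For inverses, the desired isomorphism $Z(X, \overline{X}) \to \extshrt{0}{\Lambda}{A}$ is $h([x, \overline{y}]) := (q(x), a(x, y))$, where $a(x, y) \in A$ is defined by $x = a(x, y) \cdot y$ (which is possible since membership in $X *_\Lambda \overline{X}$ forces $q(x) = q(y)$). Well-definedness on $\sim_A$-classes is a short computation using centrality, and functoriality reduces to the identity $a(x_1 x_2, y_1 y_2) = a(x_1, y_1) + a(x_2, y_2)$, which follows from
\[
x_1 x_2 = (a(x_1, y_1) \cdot y_1)(a(x_2, y_2) \cdot y_2) = (a(x_1, y_1) + a(x_2, y_2)) \cdot y_1 y_2.
\]
Bijectivity is then straightforward.

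The main obstacle will be associativity; it is conceptually clear but requires careful bookkeeping. I expect both $(\Xx + \Yy) + \Zz$ and $\Xx + (\Yy + \Zz)$ to admit a common description as the quotient of the triple fibre product $\{(x, y, z) \in X \times Y \times Z : q_X(x) = q_Y(y) = q_Z(z)\}$ by the two commuting $A$-actions $(x, y, z) \sim (-a \cdot x, a \cdot y, z)$ and $(x, y, z) \sim (x, -b \cdot y, b \cdot z)$, with coordinatewise composition. I would construct the canonical map $[[x, y], z] \mapsto [x, [y, z]]$, check that the two iterated quotient relations coincide with this common relation (this is where the additivity~\eqref{eq:a-maps add} of the $a$-map intervenes when unfolding the definition of $\sim_A$ at the outer level), and verify that the composition and structure maps agree on both sides. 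Combining all five steps then yields the abelian group structure, with $-[\Xx] = [-\Xx]$ falling out of the explicit inverse construction above.
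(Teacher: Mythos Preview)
Your proposal is correct and follows essentially the same approach as the paper: the same explicit maps $[x,(\lambda,a)] \mapsto a\cdot x$ for the identity, $[x,\overline{y}] \mapsto (q(x),a(x,y))$ for inverses, $[x,y] \mapsto [y,x]$ for commutativity, and $[[x,y],z] \mapsto [x,[y,z]]$ for associativity. The paper actually treats associativity as the most routine step rather than the main obstacle, dispatching it in one line, while devoting more care to well-definedness of the identity and inverse isomorphisms; your triple-fibre-product description is correct but more elaborate than what the paper deems necessary.
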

\begin{proof}
We must first check that $[\Xx] + [\Yy]$ is well-defined.

Suppose that $\Xx$, $\Xx'$, $\Yy$ and $\Yy'$ are central extensions of $\Lambda$ by $A$, and that
$f_X$ is an isomorphism of $\Xx$ with $\Xx'$ and $f_Y$ is an isomorphism of $\Yy$ with $\Yy'$.

Then $(f_X,f_Y) : X *_\Lambda Y \to X' *_\Lambda Y'$ given by $(f_X,f_Y)(x,y) = (f_X(x),
f_Y(y))$ is bijective. For $a \in A$ and $x \in X$,
\begin{align*}
f_X(a \cdot x) = f_X(\iota_X(r(x),a)x) &= f_X(\iota_X(r(x),a))f_X(x)\\ 
	&= \iota_{X'}(r(x),a)\cdot f_X(x) = a\cdot f_X(x)
\end{align*}
and similarly $f_Y(a\cdot y) = a\cdot f_Y(y)$ for each $y \in Y$. Hence
\begin{align*}
(f_X,f_Y)(-a\cdot x, a\cdot y)
    &= \big(f_X(-a\cdot x), f_Y(a\cdot y)\big)\\
    &= (-a\cdot f_X(x), a\cdot f_Y(y))
    \sim_A (f_X(x), f_Y(y)).
\end{align*}
It follows that there is a well-defined map $(f_X,f_Y)^\sim : Z(X,Y) \to Z(X',Y')$ determined by
$(f_X,f_Y)^\sim([x,y]) = [f_X(x), f_Y(y)]$. This map is bijective because $[x',y'] \mapsto
[f_X^{-1}(x), f_Y^{-1}(y)]$ is a well-defined inverse. One checks that $(f_X,f_Y)^\sim$
is an isomorphism between $\Xx + \Yy$ and $\Xx' + \Yy'$, so $[\Xx] + [\Yy]$ is well-defined.

It is routine to check that $[[x,y],z] \mapsto [x,[y,z]]$ determines an isomorphism $([\Xx] +
[\Yy])+[\Zz] \cong [\Xx] + ([\Yy]+[\Zz])$ so addition in $\Ext(\Lambda, A)$ is associative.
Likewise $[x,y] \mapsto [y,x]$ determines an isomorphism $[\Xx]+[\Yy] \cong [\Yy]+[\Xx]$, so the
operation is commutative.

To see that $[\Xx] + [\Xx_0] = [\Xx]$ for all $[\Xx] \in \Ext(\Lambda,A)$, we show that
$[x,(\lambda, a)] \mapsto a\cdot x$ determines an isomorphism of $Z(X, X_0)$ onto $X$ with inverse
given by $x \mapsto [x,(q(x),0)]$. We must show first that the formula $[x,(\lambda, a)] \mapsto
a\cdot x$ is well-defined. If $[x, (\lambda,a)] = [y, (\mu,b)]$ then there exists $c \in A$ such
that $y = -c\cdot x$ and $(\mu,b) = c\cdot(\lambda,a) = (\lambda, a+c)$. In particular, $q(x) =
\lambda = \mu = q(y)$, and $c = b-a$ is then the unique element $a(x,y)$ of $A$ such that $x =
a(x,y)\cdot y$. Hence
\[
a\cdot x = a\cdot((b-a)\cdot y) = b\cdot y,
\]
so the formula $[x,(\lambda, a)] \mapsto a\cdot x$ is well-defined. The map $x \mapsto [x,
(q(x),0)]$ is an inverse, and these maps determine an isomorphism of $\Xx + \Xx_0$ with $\Xx$.

Finally, we must show that $[\Xx] + [-\Xx] = [\Extshrt{0}{\Lambda}{A}]$. For this we show that the
map $[x,\overline{y}] \mapsto (q(x), a(x,y))$ is an isomorphism with inverse $(\lambda,a) \mapsto
[x, -a\cdot \overline{x}]$ for any $x$ such that $q(x) = \lambda$. We first check that $[x,
\overline{y}] \mapsto (q(x), a(x,y))$ is well-defined. Since $[x, \overline{y}] = [x',
\overline{y'}]$ in $Z(X,\overline{X})$ implies that $q(x) = q(x') = q(y) = q(y')$, it suffices to
show that $a(x,y) = a(x',y')$. To see this, observe that since $[x, \overline{y}] = [x',
\overline{y'}]$, there exists a unique $b \in A$ such that $x = -b\cdot x'$ and $\overline{y} =
b\cdot \overline{y'}$, so $y = -b \cdot y'$. Hence
\[
a(x',y')\cdot y
    = \big(a(x',y') - b\big)\cdot y'
    = -b \cdot x'
    = x.
\]
So uniqueness of $a(x,y)$ forces $a(x',y') = a(x,y)$. Thus $[x,\overline{y}] \mapsto (q(x),
a(x,y))$ is well-defined. We claim that the formula $(\lambda,a) \mapsto [x, -a\cdot \overline{x}]$
does not depend on the choice of $x$ such that $q(x) = \lambda$. To see this, suppose $q(y) =
\lambda$ also. Then $x = a(x,y)\cdot y$. Hence, using once again that $-a(x,y) \cdot \overline{y} =
\overline{a(x,y) \cdot y} = \overline{x}$, we see that
\[
[x, -a\cdot \overline{x}]
    = [a(x,y)\cdot y, -a \cdot (-a(x,y) \cdot \overline{y})]
    = [a(x,y)\cdot y, -a(x,y) \cdot (-a\cdot \overline{y})]
    = [y, -a\cdot \overline{y}].
\]
It is now routine to see that $[x,\overline{y}] \mapsto [q(x), a(x,y)]$ determines an isomorphism
from $\Xx + (-\Xx)$ to $\Extshrt{0}{\Lambda}{A}$.
\end{proof}

Our next result shows that every central extension of $\Lambda$ by $A$ is isomorphic to one of the
form $\Xx_c$ described in Example~\ref{eg:cocycle extension}, and that the assignment $c \mapsto
\Xx_c$ determines an isomorphism from $\Hcat2(\Lambda,A)$ to $\Ext(\Lambda,A)$.

Let $\Lambda$ be a $k$-graph, and let $A$ be an abelian group. Let
\[
\Xx : \Lambda^0 \times A \stackrel{\iota}{\longrightarrow} X \stackrel{q}{\longrightarrow} \Lambda
\]
be a central extension of $\Lambda$ by $A$. A \emph{normalised section for $q$} is a function
$\sigma : \Lambda \to X$ such that $q \circ \sigma$ is the identity map on $\Lambda$ and such that
$\sigma(v) = \iota(v,0)$ for all $v \in \Lambda^0$. A normalised section for $q$ is typically not
multiplicative.

\begin{thm}\label{thm:H2=Ext}
Let $\Lambda$ be a $k$-graph, let $A$ be an abelian group, and let $\Xx$ be an extension of
$\Lambda$ by $A$. For each normalised section $\sigma$ for $q : X \to \Lambda$, define $c_\sigma :
\Lambda^{*2} \to A$ by $c_\sigma(\mu,\nu)\cdot\sigma(\mu\nu) = \sigma(\mu)\sigma(\nu)$; that is,
$c_\sigma(\mu,\nu) = a(\sigma(\mu)\sigma(\nu), \sigma(\mu\nu))$. Then $c_\sigma$ is a $2$-cocycle.
If $\sigma'$ is any other normalised section for $q$, then $c_\sigma$ and $c_{\sigma'}$ are
cohomologous. Finally, the assignment $[\Xx] \mapsto [c_\sigma]$ for any normalised section
$\sigma$ for $q$ is an isomorphism $\theta : \Ext(\Lambda,A) \cong \Hcat2(\Lambda,A)$ with inverse
given by $\theta^{-1}([c]) = [\Extshrt{c}{\Lambda}{A}]$.
\end{thm}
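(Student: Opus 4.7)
My plan is to mirror the classical argument that central extensions of groups are classified by $H^2$, adapted to the $k$-graph setting using the machinery of Lemmas \ref{lem:a(x,y)=a(xz,yz)}~through~\ref{lem:ext addition}. First I will verify that $c_\sigma$ really lies in $\Zcat2(\Lambda, A)$. Normalisation is immediate: if $\nu = s(\mu)$ then $\sigma(\nu) = \iota(s(\mu), 0)$ is an identity morphism of $X$, so $\sigma(\mu)\sigma(\nu) = \sigma(\mu) = \sigma(\mu\nu)$ and the uniqueness clause in Definition~\ref{dfn:extension} forces $c_\sigma(\mu,\nu) = 0$; the case $\mu = r(\nu)$ is similar. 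The cocycle identity \eqref{eq:cocyleid} falls out of associativity in $X$: expanding both sides of $(\sigma(\lambda_1)\sigma(\lambda_2))\sigma(\lambda_3) = \sigma(\lambda_1)(\sigma(\lambda_2)\sigma(\lambda_3))$ using the definition of $c_\sigma$ and the multi-term version of Lemma~\ref{lem:a(x,y)=a(xz,yz)} expresses both sides as $A$-shifts of $\sigma(\lambda_1\lambda_2\lambda_3)$, and uniqueness of the shift yields the identity.

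Next I will show $c_\sigma$ depends on $\sigma$ only up to a coboundary, which makes $\theta$ well-defined on the level of cocycle classes. Given two normalised sections $\sigma, \sigma'$ of $q$, Remark~\ref{rmk:extension manipulations} provides a unique function $h : \Lambda \to A$ with $\sigma'(\lambda) = h(\lambda)\cdot \sigma(\lambda)$, and $h$ vanishes on vertices because both sections agree there, so $h \in \Ccat1(\Lambda,A)$. Substituting $\sigma' = h\cdot \sigma$ into the defining equation for $c_{\sigma'}$ and pulling the $A$-shifts past the composition (legal because $\Xx$ is central) gives $c_{\sigma'}(\mu,\nu) - c_\sigma(\mu,\nu) = h(\mu) + h(\nu) - h(\mu\nu) = (\dcat1 h)(\mu,\nu)$. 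I also have to check that $\theta$ respects isomorphism of extensions: if $f : X \to Y$ is an isomorphism and $\sigma$ is a normalised section for $q_X$, then $f\circ\sigma$ is a normalised section for $q_Y$ (since $f$ intertwines the $\iota$'s and $q$'s), and applying $f$ to $c_\sigma(\mu,\nu)\cdot \sigma(\mu\nu) = \sigma(\mu)\sigma(\nu)$ gives $c_{f\circ \sigma} = c_\sigma$.

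To produce the inverse, I use Example~\ref{eg:cocycle extension}: given $c \in \Zcat2(\Lambda,A)$, the extension $\Extshrt{c}{\Lambda}{A}$ admits the canonical normalised section $\sigma_0(\lambda) := (\lambda, 0)$, and a one-line computation shows $c_{\sigma_0} = c$, so $\theta \circ \theta^{-1} = \id$. I must check that the formula $\theta^{-1}([c]) := [\Extshrt{c}{\Lambda}{A}]$ respects the cohomology relation: if $c - c' = \dcat1 h$ for some normalised $h$, then the map $(\lambda, a) \mapsto (\lambda, a + h(\lambda))$ is a bijective functor $X_{c'} \to X_c$ commuting with $\iota$ and $q$, because on products the extra summands $h(\mu) + h(\nu) - h(\mu\nu)$ exactly account for the difference $c(\mu,\nu) - c'(\mu,\nu)$. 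For $\theta^{-1}\circ\theta = \id$, given any central extension $\Xx$ and a normalised section $\sigma$, the map $\Phi : X_{c_\sigma} \to X$ defined by $\Phi(\lambda, a) := a\cdot \sigma(\lambda)$ is multiplicative (the $c_\sigma$-twist in the product of $X_{c_\sigma}$ is precisely what is needed to match $\sigma(\mu)\sigma(\nu) = c_\sigma(\mu,\nu)\cdot \sigma(\mu\nu)$) and is a bijective functor intertwining the structure maps by construction.

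Finally, to see that $\theta$ is a group homomorphism, I take normalised sections $\sigma_X, \sigma_Y$ of $\Xx, \Yy$ and set $\sigma_Z(\lambda) := [\sigma_X(\lambda), \sigma_Y(\lambda)]$; this is a normalised section of $\Xx + \Yy$ by~\eqref{eq:ext sum maps}. The additivity formula \eqref{eq:a-maps add} of Lemma~\ref{lem:ext addition} gives $c_{\sigma_Z} = c_{\sigma_X} + c_{\sigma_Y}$ immediately, so $\theta([\Xx] + [\Yy]) = \theta([\Xx]) + \theta([\Yy])$. There is no single hard step here; the main risk is bookkeeping, in particular keeping track of which factor the $A$-action is applied to and verifying that every map constructed (the isomorphism for cohomologous cocycles, $\Phi$, and $(f_X,f_Y)^\sim$-style descent through $\sim_A$) actually commutes with $\iota$ and $q$ and lands in the right set of morphisms.
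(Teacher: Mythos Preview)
Your proposal is correct and follows essentially the same route as the paper's proof: normalisation and the cocycle identity via associativity and uniqueness of the $A$-shift, independence of section via the $1$-cochain $h(\lambda)=a(\sigma'(\lambda),\sigma(\lambda))$, well-definedness on isomorphism classes via $f\circ\sigma$, additivity via~\eqref{eq:a-maps add}, and mutual inverses via the canonical section $\lambda\mapsto(\lambda,0)$ of $X_c$ together with the map $(\lambda,a)\mapsto a\cdot\sigma(\lambda)$. The only slip is a sign in your check that $[c]\mapsto[\Xx_c]$ is well defined: with $c-c'=\dcat1 h$ the map $(\lambda,a)\mapsto(\lambda,a+h(\lambda))$ is a functor $X_c\to X_{c'}$ rather than $X_{c'}\to X_c$, which is exactly the bookkeeping hazard you anticipated and is immaterial to the argument.
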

\begin{proof}
We check that $c_\sigma$ is a $2$-cocycle. Fix $(\mu, \nu) \in \Lambda^{*2}$. Then
$\sigma(\mu)\sigma(\nu) = c_\sigma(\mu,\nu)\cdot\sigma(\mu\nu)$. If $\mu$ or $\nu$ is in
$\Lambda^0$, then $\sigma(\mu)\sigma(\nu) = \sigma(\mu\nu)$ so $c_\sigma(\mu,\nu) = 0$.  Hence,
$c_\sigma  \in \Ccat2(\Lambda,A)$. Fix $(\lambda,\mu,\nu) \in \Lambda^{*3}$. By uniqueness of
$a(\sigma(\lambda)\sigma(\mu)\sigma(\nu),\sigma(\lambda\mu\nu))$ (see
Lemma~\ref{lem:a(x,y)=a(xz,yz)}) it suffices to show that
\[
\big(c_\sigma(\lambda,\mu) + c_\sigma(\lambda\mu,\nu)\big) \cdot \sigma(\lambda\mu\nu)
    = \sigma(\lambda)\sigma(\mu)\sigma(\nu)
    = \big(c_\sigma(\mu,\nu)+c_\sigma(\lambda,\mu\nu)\big)\cdot\sigma(\lambda\mu\nu).
\]
We just verify the first equality; the second follows from similar considerations. We calculate
\[
\sigma(\lambda)\sigma(\mu)\sigma(\nu)
    = c_\sigma(\lambda,\mu)\cdot \sigma(\lambda\mu)\sigma(\nu)
    = \big(c_\sigma(\lambda,\mu) + c_\sigma(\lambda\mu,\nu)\big) \cdot \sigma(\lambda\mu\nu).
\]
Hence, $c_\sigma  \in \Zcat2(\Lambda,A)$.

Now suppose that $\sigma'$ is another normalised section for $q$. For each $\lambda \in \Lambda$,
we have $q(\sigma(\lambda)) = q(\sigma'(\lambda))$, so there is a unique $b(\lambda) :=
a(\sigma(\lambda), \sigma'(\lambda)) \in A$ such that $\sigma(\lambda) =
b(\lambda)\cdot\sigma'(\lambda)$. Since $\sigma$ and $\sigma'$ are normalised, $b(v) = 0$ for all
$v \in \Lambda^0$, so $b \in \Ccub1(\Lambda, A)$.

 If $s(\mu) = r(\nu)$ in $\Lambda$, then
\begin{align*}
(b(\mu\nu) - b(\mu) - b(\nu))\cdot\sigma(\mu)\sigma(\nu)
    &= b(\mu\nu) \cdot\sigma'(\mu)\sigma'(\nu)\\
    &= (b(\mu\nu) + c_{\sigma'}(\mu,\nu))\cdot\sigma'(\mu\nu) \\
    &= c_{\sigma'}(\mu,\nu)\cdot\sigma(\mu\nu) \\
    &= (c_{\sigma'}(\mu,\nu) - c_{\sigma}(\mu,\nu))\cdot \sigma(\mu)\sigma(\nu).
\end{align*}
Hence $c_{\sigma'}(\mu,\nu) - c_{\sigma}(\mu,\nu) = (\dcat1b)(\mu,\nu)$, so $c_\sigma$ and
$c_{\sigma'}$ are cohomologous.

For the final assertion, we must first check that $[\Xx] \mapsto [c_\sigma]$ is well-defined. If
$f$ is an isomorphism of extensions $\Xx$ and $\Yy$, and if $\sigma$ is a section for $q_{{X}}$,
then $\sigma' := {f} \circ \sigma$ is a section for $q_{{Y}}$. Since
\[
a(x,x')\cdot f(x') = f(a(x,x')\cdot x') = f(x)
\]
for all $x,x' \in X$ with $q(x) = q(x')$, we have $a(f(x),f(x')) = a(x,x')$ for all $x,x' \in X$.
In particular, since $f$ is a functor,
\begin{align*}
c_{\sigma'}(\mu,\nu)
    &= a(\sigma'(\mu)\sigma'(\nu), \sigma'(\mu\nu)) \\
    &= a(f(\sigma(\mu)\sigma(\nu)), f(\sigma(\mu\nu)))
    = a(\sigma(\mu)\sigma(\nu), \sigma(\mu\nu))
    = c_\sigma(\mu,\nu).
\end{align*}
Thus $c_\sigma = c_{\sigma'}$. Since we already proved that distinct normalised sections for the
same central extension yield cohomologous categorical $2$-cocycles, it follows that for any pair of
sections $\sigma$ for $q_X$ and $\rho$ for $q_Y$ we have $[c_\sigma] = [c_\rho]$ in
$\Hcat2(\Lambda,A)$. Hence $[\Xx] \mapsto [c_\sigma]$ (for any section $\sigma$ for $q$) is well
defined. This map is additive by~\eqref{eq:a-maps add}, and hence a homomorphism.

To see that it is an isomorphism, it suffices to show that the map $c \mapsto
[\Extshrt{c}{\Lambda}{A}]$ from $\Zcat2(\Lambda,A)$ to $\Ext(\Lambda,A)$ determines a well-defined
map $[c] \mapsto [\Extshrt{c}{\Lambda}{A}]$ from $\Hcat2(\Lambda,A)$ to $\Ext(\Lambda,A)$, and that
this map is an inverse for $\theta$. Fix $c \in \Zcat2(\Lambda,A)$ and $b  \in \Ccat1(\Lambda, A)$
and let $c' = c - (\dcat1b)$ so that $[c] = [c'] \in \Hcat2(\Lambda, A)$. Define $f : X_c \to
X_{c'}$ by $f(\lambda,a) := (\lambda, a + b(\lambda))$. To see that $f$ is a functor, we calculate:
\[
f((\lambda,a)(\mu,a'))
    = f(\lambda\mu, c(\lambda,\mu) + a + a')
    = (\lambda\mu, c(\lambda,\mu) + b(\lambda\mu) + a + a'),
\]
and
\[
f(\lambda,a)f(\mu,a')
    = (\lambda, a + b(\lambda))(\mu, a' + b(\mu))
    = (\lambda\mu, c'(\lambda,\mu) + b(\lambda) + b(\mu) + a + a').
\]
Since $c - c' = \dcat1b$, we have
\[
c(\lambda,\mu) = c'(\lambda,\mu) + b(\lambda) + b(\mu) - b(\lambda\mu),
\]
so $c'(\lambda,\mu) + b(\lambda) + b(\mu) = c(\lambda,\mu) + b(\lambda\mu)$, giving
$f((\lambda,a)(\mu,a')) = f(\lambda,a)f(\mu,a')$. The functor $f$ is bijective because $(\lambda,a)
\mapsto (\lambda, a - b(\lambda))$ is an inverse. Hence $[c] \mapsto [\Extshrt{c}{\Lambda}{A}]$ is
a well-defined map from $\Hcat2(\Lambda,A)$ to $\Ext(\Lambda,A)$.

To see that $[c] \mapsto [\Extshrt{c}{\Lambda}{A}]$ is an inverse for $\theta$, fix an extension
\[
\Xx : \Lambda^0 \times A \stackrel{\iota}{\longrightarrow} X \stackrel{q}{\longrightarrow} \Lambda
\]
and a section $\sigma$ for $q$. We must show that $\Extshrt{c_\sigma}{\Lambda}{A}$ is isomorphic to
$\Xx$. We define $f : X \to \extshrt{c_\sigma}{\Lambda}{A}$ by $f(x) := \big(q(x), a(x,
\sigma(q(x)))\big)$ and $g : \extshrt{c_\sigma}{\Lambda}{A} \to X$ by $g(\lambda,a) :=
a\cdot\sigma(\lambda)$. Then
\begin{align*}
f\big(g(\lambda,a)\big)
    &= f\big(a\cdot\sigma(\lambda)\big)\\
    &= \big(q(a\cdot\sigma(\lambda)), a(a\cdot\sigma(\lambda), 
    \sigma(q(a\cdot\sigma(\lambda))))\big)
    = \big(\lambda, a(a\cdot\sigma(\lambda), \sigma(\lambda))\big).
\end{align*}
For all $x \in X$ and $b \in A$, we have $a(b\cdot x, x) = b$ by definition. Thus $f \circ g =
\id_{X_{c_\sigma}}$. Likewise, for $x \in X$, we have
\[
g\circ f(x)
    = g\big(q(x), a(x, \sigma(q(x)))\big)
    = a(x, \sigma(q(x))) \cdot \sigma(q(x))
    = x.
\]
So $f$ and $g$ are mutually inverse, and we just need to show that $f$ preserves composition. We
fix $(x,y) \in X^{*2}$ and calculate:
\[
f(xy)
    = \big(q(xy), a(xy, \sigma(q(xy)))\big)
    = \big(q(x)q(y), a(xy, \sigma(q(x)q(y)))\big),
\]
and
\begin{align*}
f&(x)f(y)\\
    &= \big(q(x), a(x, \sigma(q(x)))\big)\big(q(y), a(y, \sigma(q(y)))\big) \\
    &= \big(q(x)q(y), c_\sigma(q(x), q(y)) + a(x, \sigma(q(x))) + a(y, \sigma(q(y)))\big)\\
    &= \big(q(x)q(y), a\big(\sigma(q(x))\sigma(q(y)), \sigma(q(x)q(y))\big) + a(x, \sigma(q(x))) + a(y, \sigma(q(y)))\big).
\end{align*}
Since $\Xx$ is a central extension, we have
\begin{align*}
\big(a\big(\sigma(q(x))\sigma(q(y))&, \sigma(q(x)q(y))\big) + a(x, \sigma(q(x))) + a(y, \sigma(q(y)))\big)\cdot\sigma(q(x)q(y))\\
    &=\big(a(x, \sigma(q(x))) + a(y, \sigma(q(y)))\big)\sigma(q(x))\sigma(q(y))\\
    &= \big(a(x,\sigma(q(x)))\cdot \sigma(q(x))\big) \big(a(y, \sigma(q(y)))\cdot\sigma(q(y))\big)\\
    &= xy.
\end{align*}
Hence 
\[
a\big(\sigma(q(x))\sigma(q(y)), \sigma(q(x)q(y))\big) 
	+ a(x, \sigma(q(x))) + a(y, \sigma(q(y))) 
		= a(xy,\sigma(q(x)q(y))),
\] 
giving $f(xy) = f(x)f(y)$ as claimed. This shows that $f$
is a functor, and hence an isomorphism of extensions. So $[c] \mapsto [\Extshrt{c}{\Lambda}{A}]$ is
a left inverse for $\theta$. To see that it is a right inverse also, fix a cocycle $c \in
\Zcat2(\Lambda,A)$. Then the normalised section $\sigma_c : \lambda \mapsto (\lambda, 0)$ for $q :
X_c \to \Lambda$ satisfies $c_{\sigma_c} = c$.
\end{proof}

\begin{thm}\label{thm:2-cohomology iso}
The map  $\psi : \Hcub2(\Lambda,A) \to \Hcat2(\Lambda,A)$ of Theorem~\ref{thm:2-cocycle map} given by $[\phi] \mapsto [c_\phi]$ is an isomorphism.
\end{thm}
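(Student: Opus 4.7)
The plan is to build, at the cocycle level, an additive map $c \mapsto \phi_c$ from $\Zcat2(\Lambda,A)$ to $\Zcub2(\Lambda,A)$ which is a cocycle-level left inverse to $\phi \mapsto c_\phi$ and which satisfies $[c_{\phi_c}] = [c]$ in $\Hcat2(\Lambda,A)$ for every $c \in \Zcat2(\Lambda,A)$. Combined with a check that the map sends categorical $1$-coboundaries to cubical $1$-coboundaries, this makes $[c] \mapsto [\phi_c]$ a two-sided inverse to $\psi$ on cohomology.

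Given $c \in \Zcat2(\Lambda,A)$, I would work inside the extension $\Xx_c$ of Example~\ref{eg:cocycle extension}. Define a normalised section $\sigma : \Lambda \to X_c$ on vertices by $\sigma(v) := (v,0)$, on edges by $\sigma(e) := (e,0)$, and in general by $\sigma(\lambda) := \sigma(\overline{\lambda}_1)\cdots\sigma(\overline{\lambda}_{|\lambda|})$, where $\overline{\lambda} = \overline{\lambda}_1\cdots\overline{\lambda}_{|\lambda|}$ is the preferred edge-factorisation of $\lambda$ in $E_\Lambda$ and the product is taken in $X_c$. Define $\phi_c : Q_2(\Lambda) \to A$ as follows: for $\lambda \in Q_2(\Lambda)$ of degree $e_i + e_j$ with $i < j$, factorise $\lambda = fg = g'f'$ with $d(f) = d(f') = e_j$ and $d(g) = d(g') = e_i$, and set $\phi_c(\lambda) := c(f,g) - c(g',f')$. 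Equivalently, $\phi_c(\lambda)$ is the unique element of $A$ with $\sigma(f)\sigma(g) = \phi_c(\lambda)\cdot\sigma(\lambda)$ in $X_c$.

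To see that $\phi_c \in \Zcub2(\Lambda,A)$, fix $\lambda \in Q_3(\Lambda)$ factorised as in~\eqref{eq:commutingcube}. Its transition-graph component $F_\lambda$ contains exactly two directed paths from the preferred vertex $f_0 g_0 h_0$ to the totally-reversed vertex $h_2 g_2 f_2$: one via $g_3 f_3 h_0,\ g_3 h_3 f_2$, the other via $f_0 h_1 g_1,\ h_2 f_1 g_1$. Each $F_\Lambda$-edge modifies the $A$-component of the corresponding product in $X_c$ by exactly one $\phi_c$-value on a $2$-cube, so associativity in $X_c$ forces the totals along the two paths to agree; the resulting identity is precisely~\eqref{eq:cubical 2-cocycle}. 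The left-inverse property, $\phi_{c_\phi} = \phi$, is immediate from Theorem~\ref{thm:2-cocycle map}(\ref{it:on squares}): for $\lambda = fg = g'f'$ with $d(f) = e_j > e_i = d(g)$, the theorem gives $c_\phi(f,g) = \phi(\lambda)$ and $c_\phi(g',f') = 0$. The coboundary-preservation property follows from a direct calculation showing that $\phi_{\dcat1 b} = \dcub1(-b|_{Q_1(\Lambda)})$ for every $b \in \Ccat1(\Lambda,A)$.

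Finally, for $[c_{\phi_c}] = [c]$: I claim $c_{\phi_c} = c_\sigma$ on cochains, where $c_\sigma$ is the $2$-cocycle of Theorem~\ref{thm:H2=Ext} associated with $\sigma$; that theorem then gives $[c_{\phi_c}] = [c_\sigma] = [c]$. Both $c_\sigma(\mu,\nu)$ and $c_{\phi_c}(\mu,\nu)$ measure the $A$-discrepancy between the elements $\sigma(\mu)\sigma(\nu)$ and $\sigma(\mu\nu)$ of $X_c$; by construction of $\sigma$ these are the $X_c$-products corresponding to the paths $\overline{\mu}\,\overline{\nu}$ and $\overline{\mu\nu}$ in $E_\Lambda$, which are joined by a sequence of allowable transitions in $F_\Lambda$. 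Each transition alters the $A$-component by a single $\phi_c$-value, so the total discrepancy equals $\tilde\phi_c$ of the sequence, which is $S_{\phi_c}(\overline{\mu}\,\overline{\nu}) = c_{\phi_c}(\mu,\nu)$ by Lemma~\ref{lem:coboundary on transition graph}. The main delicate point in these arguments is the sign and direction bookkeeping, tracking whether each $F_\Lambda$-edge runs from a more-preferred to a less-preferred vertex and hence whether the $\phi_c$-contribution to the accumulated $X_c$-product enters with a positive sign; but once this is pinned down the two identities both reduce to the path-independence that associativity in $X_c$ provides, in a manner converse to Lemma~\ref{lem:meet-up}.
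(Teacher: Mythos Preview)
Your approach is essentially the same as the paper's: both construct the cubical cocycle from a section of $\Xx_c$ built edge-by-edge via the preferred factorisation, both verify the cubical cocycle identity via the two maximal paths in the hexagon $F_\lambda$ for $\lambda \in Q_3(\Lambda)$, and both appeal to Theorem~\ref{thm:H2=Ext} to identify the resulting categorical class with $[c]$. Your organisation is slightly cleaner in that you obtain the exact identity $\phi_{c_\phi} = \phi$ at the cocycle level (giving injectivity of $\psi$ directly), whereas the paper argues injectivity via independence of $[\phi]$ from the choice of section; note, however, that your sign convention is opposite to the paper's (your $\phi_c$ is their $-\phi$, so where they obtain $c_\phi = -c_\sigma$ you obtain $c_{\phi_c} = c_\sigma$), and that directed paths in $F_\Lambda$ run \emph{toward} the preferred vertex, so the two hexagon paths go from $h_2 g_2 f_2$ to $f_0 g_0 h_0$ rather than the reverse.
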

\begin{proof}
Fix a central extension
\[
\Xx : \Lambda^0 \times A \stackrel{\iota}{\longrightarrow} X
\stackrel{q}{\longrightarrow} \Lambda
\]
of $\Lambda$ by $A$.  For each $v \in \Lambda^0$, let $\sigma(v) \in X$ be the unique identity
morphism such that $q(\sigma(v)) = v$; that is, $\sigma(v) = \iota(v, 0)$. For each edge $e \in E^1_\Lambda$
in the skeleton of $\Lambda$, fix an element $\sigma(e) \in X$ such that $q(\sigma(e)) = e$. Extend
$\sigma$ to a section for $q$ by setting $\sigma(\lambda) :=
\sigma(\overline{\lambda}_1)\sigma(\overline{\lambda}_2) \cdots
\sigma(\overline{\lambda}_{|\lambda|})$ where $\lambda \mapsto \overline\lambda$ is the preferred
section for the quotient map $\pi : E_\Lambda \to \Lambda$ as in Notation~\ref{ntn:coloured graph}.

Define $\phi : Q_2(\Lambda) \to A$ by
\[
\phi(\lambda) := a(\sigma(\lambda), \sigma(F^0_1(\lambda))\sigma(F^1_2(\lambda)));
\]
that is, if $d(\lambda) = e_i + e_j$ with $i < j$, and if
$\lambda = fg = g'f'$ where $f,f' \in \Lambda^{e_i}$ and $g,g'
\in \Lambda^{e_j}$, then
\[
\phi(\lambda)\cdot \big(\sigma(g')\sigma(f')\big) = \sigma(f)\sigma(g).
\]

We check that $\phi$ is a cubical $2$-cocycle. Fix $\lambda \in Q_3(\Lambda)$, say
$d(\lambda) = e_i + e_j + e_l$ where $i < j < l$, and factorise
\[
\lambda = f_0 g_0 h_0 = f_0 h_1 g_1 = h_2 f_1 g_1 = h_2 g_2 f_2 = g_3 h_3 f_2 = g_3 f_3 h_0
\]
as in~\eqref{eq:commutingcube}, so $d(f_n) = e_i$, $d(g_n) = e_j$ and $d(h_n) = e_l$ for all $n$.

By definition of $\phi$, we have
\[
\phi(F^0_3(\lambda)) + \phi(F^1_2(\lambda)) + \phi(F^0_1(\lambda))
    = \phi(f_0g_0) + \phi(f_3h_0) + \phi(g_3h_3).
\]
Moreover,
\begin{gather*}
\phi(f_0g_0)\cdot\sigma(g_3)\sigma(f_3) = \sigma(f_0)\sigma(g_0),\quad
    \phi(f_3h_0)\cdot\sigma(h_3)\sigma(f_2)=\sigma(f_3)\sigma(h_0),\quad\text{ and}\\
    \phi(g_3h_3)\cdot\sigma(h_2)\sigma(g_2) = \sigma(g_3)\sigma(h_3).
\end{gather*}
Using this and three applications of
Lemma~\ref{lem:a(x,y)=a(xz,yz)}, we deduce that
\[
\big(\phi(f_0g_0) + \phi(f_3h_0) + \phi(g_3h_3)\big)\cdot \sigma(h_2)\sigma(g_2)\sigma(f_2)
    = \sigma(f_0)\sigma(g_0)\sigma(h_0) = \sigma(\lambda),
\]
and hence
\begin{align*}
\phi(F^0_3(\lambda)) + \phi(F^1_2(\lambda)) + \phi(F^0_1(\lambda)) &=
a(\sigma(\lambda), \sigma(h_2)\sigma(g_2)\sigma(f_2)).
\intertext{Symmetric reasoning shows that }
\phi(F^1_1(\lambda))+ \phi(F^0_2(\lambda)) + \phi(F^1_3(\lambda))  &=
a(\sigma(\lambda), \sigma(h_2)\sigma(g_2)\sigma(f_2)).
\end{align*}
In particular, $\phi$ satisfies~\eqref{eq:cubical 2-cocycle}, so it is a cubical $2$-cocycle.

Next, we claim that the cohomology class $[\phi]$ is independent of the choices made (on the
skeleton). Let $\tilde{\sigma}$ be another section of $q$ constructed as above. We show that the
resulting cubical $2$-cocycle is cohomologous to $\phi$. Let $b(e) = a(\tilde{\sigma}(e),
\sigma(e))$ for each $e \in Q_1(\Lambda) = E^1_\Lambda$.  Then the cubical $2$-cocycle
$\tilde{\phi}$ built from $\tilde{\sigma}$ is defined so that if $d(\lambda) = e_i + e_j$ with $i <
j$, and if $\lambda = fg = g'f'$ where $f,f' \in \Lambda^{e_i}$ and $g,g' \in \Lambda^{e_j}$, then
\[
\tilde\phi(\lambda) = a(\tilde{\sigma}(f)\tilde{\sigma}(g), \tilde{\sigma}(g')\tilde{\sigma}(f')).
\]
A routine computation then shows
\[
\phi(\lambda)  - \tilde\phi(\lambda) = b(g') + b(f') - b(f) - b(g) = (\dcub1 b)(\lambda).
\]
Hence $[\phi] = [\tilde\phi]$.

We now claim that the cocycle $c_\sigma$ obtained from the section $\sigma$ via
Theorem~\ref{thm:H2=Ext} is equal to $-c_\phi$. To see this, fix $(\mu,\nu) \in \Lambda^{*2}$. By
definition, $c_\phi(\mu,\nu) = S_\phi(\overline{\mu}\,\overline{\nu})$. Fix a sequence of allowable
transitions from $\overline{\mu}\,\overline{\nu}$ to $\overline{\mu\nu}$; that is, a path
$\alpha_1\cdots\alpha_n$ in $F_\Lambda$ with $r(\alpha_1) = \overline{\mu\nu}$ and $s(\alpha_n) =
\overline{\mu}\,\overline{\nu}$. Fix $i \le n$ and let $u = r(\alpha_i)$ and $w = s(\alpha_i)$.
Recall from Notation~\ref{ntn:phitilde} the definition of $\tilde\phi : F^1_\Lambda \to A$. For
each $i$, we have $\tilde{\phi}(\alpha_i) = \phi(w_{p(\alpha_i)} w_{p(\alpha_i)+1}) =
a(u_{p(\alpha_i)} u_{p(\alpha_i)+1}, w_{p(\alpha_i)} w_{p(\alpha_i)+1})$. Let $\ell := |\mu\nu|$.
Using the first assertion of Lemma~\ref{lem:a(x,y)=a(xz,yz)}, we see that
\[
\tilde{\phi}(\alpha_i) =  a\big(\sigma(u_1)\sigma(u_2)\cdots\sigma(u_\ell),
\sigma(w_1)\sigma(w_2)\cdots\sigma(w_\ell)\big).
\]
Hence, by definition of $S_\phi$, we have
\[
S_\phi(\overline{\mu}\,\overline{\nu})  = \sum^n_{i=1}
a\big(\sigma(r(\alpha_i)_1)\cdots\sigma(r(\alpha_i)_\ell),
\sigma(s(\alpha_i)_1)\cdots\sigma(s(\alpha_i)_\ell)\big).
\]
Now using the second assertion of
Lemma~\ref{lem:a(x,y)=a(xz,yz)}, we deduce that
\begin{align*}
S_\phi(\overline{\mu}\,\overline{\nu})
    &= a\big(\sigma((\overline{\mu\nu})_1\cdots\sigma(\overline{\mu\nu})_\ell,
        \sigma(\overline{\mu}_1)\cdots\sigma(\overline{\mu}_{|\mu|})
        \sigma(\overline{\nu}_1)\cdots\sigma(\overline{\nu}_{|\nu|})\big) \\
    &= a(\sigma(\mu\nu), \sigma(\mu)\sigma(\nu)).
\end{align*}
Hence $c_\phi(\mu,\nu)  = a(\sigma(\mu\nu), \sigma(\mu)\sigma(\nu))= -c_\sigma(\mu,\nu)$ as
claimed.

Now fix $c \in \Zcat2(\Lambda, A)$. By Theorem~\ref{thm:H2=Ext}, we have $[c] = [c_\sigma]$ for any
section $\sigma$ for $q : \extshrt{c}{\Lambda}{A} \to \Lambda$. By the preceding paragraphs, there
exists a section $\sigma$ for $q : \extshrt{c}{\Lambda}{A} \to \Lambda$, and a cubical $2$-cocycle
$\phi$ on $\Lambda$ such that $c_\sigma = -c_\phi$. In particular, we have $[c] = [c_\sigma] =
[-c_\phi]$, and it follows that the map $[\phi] \mapsto [c_\phi]$ is surjective from
$\Hcub2(\Lambda,A)$ to $\Hcat2(\Lambda,A)$.  Since the class $[\phi]$ does not depend on the choice
of section $\sigma$, the map is also injective.
\end{proof}

\section{Twisted \texorpdfstring{$k$}{k}-graph \texorpdfstring{$C^*$}{C*}-algebras}\label{sec:twisted
algebras}

In this section, unless otherwise noted, we restrict attention to row-finite $k$-graphs with no
sources and consider twisted $k$-graph $C^*$-algebras. We recall the definition of a twisted
$k$-graph $C^*$-algebra from \cite{kps3}, and then introduce the notion of a twisted Cuntz-Krieger
$(\Lambda, c)$-family associated to a categorical cocycle $c \in \Zcat{2} ( \Lambda , \TT )$. We
show that given a cubical cocycle $\phi \in \Zcub{2} ( \Lambda , \TT )$, if $c_\phi \in \Zcat{2} (
\Lambda , \TT )$ is the 2-cocycle of Theorem~\ref{thm:2-cocycle map} then the twisted $C^*$-algebra
$C^*_\phi(\Lambda)$ introduced in \cite[Section~7]{kps3} is universal for twisted Cuntz-Krieger
$c_\phi$-families for $\Lambda$.

For the abelian group $\TT$ we break with our conventions earlier in the paper and write the group
operation multiplicatively, write $\overline{z}$ for the inverse of $z \in \TT$ and write $1$ for
the identity element.

For the following, recall from \cite{RSY2003} that a $k$-graph $\Lambda$ is said to be
\emph{locally convex} if, whenever $e_i, e_j$ are distinct generators of $\NN^k$ and $\mu \in
\Lambda^{e_i}$ and $\nu \in \Lambda^{e_j}$ satisfy $r(\mu) = r(\nu)$, both $s(\mu)\Lambda^{e_j}$
and $s(\nu) \Lambda^{e_i}$ are nonempty.

\begin{dfn}[{see \cite[Definitions 7.4, 7.5]{kps3}}]\label{def:twisted algebra}% \label{def:twisted CK}
Let $\Lambda$ be a row-finite locally convex $k$-graph and fix $\phi \in \Zcub2(\Lambda,\TT)$. A
Cuntz-Krieger $\phi$-representation of $\Lambda$ in a $C^*$-algebra $A$ is a set $\{p_v : v \in
\Lambda^0\} \subseteq A$ of mutually orthogonal projections and a set $\{s_\lambda : \lambda \in
\bigcup^k_{i=1} \Lambda^{e_i}\} \subseteq A$ satisfying
\begin{enumerate}
    \item\label{it:new CK1} for all $i \le k$ and $\lambda \in \Lambda^{e_i}$,
        $s_\lambda^*s_\lambda = p_{s(\lambda)}$;
    \item\label{it:new CK2} for all $1 \le i < j \le k$ and $\mu,\mu' \in \Lambda^{e_i}$,
        $\nu,\nu' \in \Lambda^{e_j}$ such that $\mu\nu = \nu'\mu'$,
    \[
        s_{\nu'} s_{\mu'} = \phi(\mu\nu)s_\mu s_\nu;\text{ and}
    \]
    \item\label{it:new CK3} for all $v \in \Lambda^0$ and all $i \in \{1, \dots, k\}$ such that
        $v\Lambda^{e_i} \not= \emptyset$,
    \[
        p_v =  \sum_{\lambda \in v\Lambda^{e_i}} s_\lambda s_\lambda^*.
    \]
\end{enumerate}
We write $C^*_\phi(\Lambda)$ for the universal $C^*$-algebra generated by a Cuntz-Krieger
$\phi$-rep\-re\-sen\-ta\-tion of~$\Lambda$.
\end{dfn}

The following is much closer to the usual definition of a Cuntz-Krieger $\Lambda$-family. Notice,
however, that we now restrict attention to $k$-graphs with no sources: that is, $v\Lambda^n \not=
\emptyset$ for all $v \in \Lambda^0$ and $n \in \NN^k$. Every $k$-graph with no sources is locally
convex. Versions of the following definition for row-finite locally convex $k$-graphs or for
finitely aligned $k$-graphs incorporating the ideas of \cite{RSY2003} or \cite{RSY2004} seem likely
to produce reasonable notions of twisted $k$-graph $C^*$-algebras but we do not pursue this level
of generality here.

\begin{dfn}\label{dfn:twisted family}
Let $(\Lambda,d)$ be a row-finite $k$-graph with no sources,
and fix $c \in \Zcat2(\Lambda,\TT)$. A Cuntz-Krieger
$(\Lambda,c)$-family in a $C^*$-algebra $B$ is a function $t :
\lambda \mapsto t_\lambda$ from $\Lambda$ to $B$ such that
\begin{itemize}
\item[(CK1)] $\{t_v : v \in \Lambda^0\}$ is a collection of
    mutually orthogonal projections;
\item[(CK2)] $t_\mu t_\nu = c(\mu,\nu)t_{\mu\nu}$ whenever
    $s(\mu) = r(\nu)$;
\item[(CK3)] $t^*_\lambda t_\lambda = t_{s(\lambda)}$ for all $\lambda \in \Lambda$; and
\item[(CK4)] $t_v = \sum_{\lambda \in v\Lambda^n} t_\lambda t^*_\lambda$ for all $v \in
    \Lambda^0$ and $n \in \NN^k$.
\end{itemize}
\end{dfn}

We first show that given a $\TT$-valued $2$-cocycle $\phi$, the universal $C^*$-algebra
$C^*_\phi(\Lambda)$ of Definition~\ref{def:twisted algebra} is universal for Cuntz-Krieger
$(\Lambda, c_\phi)$-families.

Recall from Notation~\ref{ntn:coloured graph} that for $\lambda \in \Lambda$, we write
$\overline{\lambda}$ for the path in $E_\Lambda$ corresponding to the factorisation $\lambda =
\overline{\lambda}_1 \cdots \overline{\lambda}_n$ of $\lambda$ in which edges of degree $e_1$
appear leftmost, then those of degree $e_2$ and so on.

\begin{prop}\label{prp:old=new}
Let $\Lambda$ be a row-finite $k$-graph with no sources, and let $\phi \in \Zcub{2}(\Lambda, \TT)$.
Let $c_\phi \in \Zcat2(\Lambda,\TT)$ be the categorical $2$-cocycle obtained from
Theorem~\ref{thm:2-cocycle map}. Let $\{p_v : v \in \Lambda^0\}$ and $\{s_{\lambda} : \lambda \in
\bigsqcup^k_{i=1} \Lambda^{e_i} \}$ be the universal generating Cuntz-Krieger $\phi$-representation
of $\Lambda$ in $C^*_\phi(\Lambda)$. For $v \in \Lambda^0$, let $t_v := p_v$ and for $\lambda \in
\Lambda \setminus \Lambda^0$, set $t_\lambda = s_{\overline{\lambda}_1}\cdots
s_{\overline{\lambda}_{|\lambda|}}$. Then $t : \lambda \mapsto t_\lambda$ constitutes a
Cuntz-Krieger $(\Lambda,c_\phi)$-family in $C^*_\phi(\Lambda)$. Moreover, this family is universal
in the sense that given any  Cuntz-Krieger $(\Lambda,c_\phi)$-family $\{t'_\lambda : \lambda \in
\Lambda\}$  in a $C^*$-algebra $B$, there is a homomorphism $\pi: C^*_\phi(\Lambda) \to B$ such
that $\pi(t_\lambda) = t'_\lambda$.
\end{prop}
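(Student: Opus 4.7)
The strategy is to verify relations (CK1)--(CK4) for $\{t_\lambda\}$ directly, and then, given any Cuntz--Krieger $(\Lambda, c_\phi)$-family $\{t'_\lambda\}$, to assemble a Cuntz--Krieger $\phi$-representation and invoke universality of $C^*_\phi(\Lambda)$. Relation (CK1) is immediate from Definition~\ref{def:twisted algebra}(\ref{it:new CK1}). Relation (CK3) follows by induction on $|\lambda|$ using Definition~\ref{def:twisted algebra}(\ref{it:new CK1}) and the fact that $p_{r(e)} s_e = s_e$, which one extracts from Definition~\ref{def:twisted algebra}(\ref{it:new CK3}). Relation (CK4) follows by induction on $|n|$: the base case $|n| = 1$ is Definition~\ref{def:twisted algebra}(\ref{it:new CK3}), and for $n = m + e_i$ one factors each $\lambda \in v\Lambda^n$ as $\mu\nu$ with $d(\nu) = e_i$, uses (CK2) (established below) to replace $t_{\mu\nu}$ by a unimodular scalar times $t_\mu t_\nu$, and uses (CK3) to collapse the inner sum $\sum_{\nu \in s(\mu)\Lambda^{e_i}} t_\nu t_\nu^*$ to $t_{s(\mu)}$.

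The core of the argument is (CK2). For $(\mu, \nu) \in \Lambda^{*2}$, the product $t_\mu t_\nu = s_{\overline{\mu}_1} \cdots s_{\overline{\mu}_{|\mu|}} s_{\overline{\nu}_1} \cdots s_{\overline{\nu}_{|\nu|}}$ is a length-$(|\mu|+|\nu|)$ product of edge generators indexed by the vertex $\overline{\mu}\,\overline{\nu}$ of the transition graph $F_\Lambda$. Choose a directed path $\alpha = \alpha_1 \alpha_2 \cdots \alpha_n$ in $F_\Lambda$ from the unique sink $\overline{\mu\nu}$ of its connected component to $\overline{\mu}\,\overline{\nu}$, and set $v_0 := \overline{\mu\nu}$, $v_k := s(\alpha_k)$. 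Each $\alpha_k$ swaps two adjacent edges at positions $p_k, p_k+1$, with the pair in $v_{k-1}$ in preferred (increasing) colour order and the pair in $v_k$ in non-preferred order. A single application of Definition~\ref{def:twisted algebra}(\ref{it:new CK2}) to these two adjacent factors shows that the edge product corresponding to $v_k$ equals $\tilde\phi(\alpha_k) = \phi(\pi(v_{k-1,p_k} v_{k-1,p_k+1}))$ times the product corresponding to $v_{k-1}$ (see Notation~\ref{ntn:phitilde}). Iterating along $\alpha$ and using that $\tilde\phi$ is a functor,
\[
t_\mu t_\nu = \tilde\phi(\alpha)\, t_{\mu\nu} = S_\phi(\overline{\mu}\,\overline{\nu})\, t_{\mu\nu} = c_\phi(\mu,\nu)\, t_{\mu\nu},
\]
where the second equality is Lemma~\ref{lem:coboundary on transition graph} and the third is the definition of $c_\phi$ in Theorem~\ref{thm:2-cocycle map}.

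For universality, given a Cuntz--Krieger $(\Lambda, c_\phi)$-family $\{t'_\lambda\}$ in a $C^*$-algebra $B$, set $p'_v := t'_v$ and $s'_e := t'_e$ for $e \in Q_1(\Lambda)$. The projection and range relations of Definition~\ref{def:twisted algebra} for this data follow from (CK1), (CK3), and (CK4). For Definition~\ref{def:twisted algebra}(\ref{it:new CK2}) with a commuting square $\mu\nu = \nu'\mu'$, $d(\mu) = d(\mu') = e_i$, $d(\nu) = d(\nu') = e_j$, $i < j$: (CK2) gives $t'_\mu t'_\nu = c_\phi(\mu,\nu) t'_{\mu\nu}$ and $t'_{\nu'}t'_{\mu'} = c_\phi(\nu',\mu') t'_{\mu\nu}$, and by Theorem~\ref{thm:2-cocycle map}(\ref{it:on squares}) these scalars are $1$ and $\phi(\mu\nu)$ respectively, so $s'_{\nu'}s'_{\mu'} = \phi(\mu\nu)\, s'_\mu s'_\nu$. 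Universality of $C^*_\phi(\Lambda)$ now supplies a homomorphism $\pi : C^*_\phi(\Lambda) \to B$ with $\pi(p_v) = t'_v$ and $\pi(s_e) = t'_e$. Finally, $\pi(t_\lambda) = t'_{\overline{\lambda}_1} \cdots t'_{\overline{\lambda}_{|\lambda|}}$, and a straightforward induction using (CK2) and Theorem~\ref{thm:2-cocycle map}(\ref{it:already preferred}) (since every initial segment of $\overline{\lambda}$ is already in preferred colour order) shows this equals $t'_\lambda$.

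The principal obstacle is the bookkeeping in (CK2): one must align the direction of edges in $F_\Lambda$ (from preferred to less-preferred factorizations) with the direction in which Definition~\ref{def:twisted algebra}(\ref{it:new CK2}) is applied in the $C^*$-algebra, and verify that the accumulated scalar exactly matches the functor $\tilde\phi$ used in Lemma~\ref{lem:coboundary on transition graph} to build the shuffle function $S_\phi$. Once this alignment is correct the identification of the scalar with $c_\phi(\mu,\nu)$ is immediate, and everything else reduces to standard Cuntz--Krieger manipulations.
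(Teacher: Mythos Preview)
Your proof is correct and follows essentially the same route as the paper's: both arguments establish (CK2) by tracking the edge-product along a path in the transition graph $F_\Lambda$ from $\overline{\mu\nu}$ to $\overline{\mu}\,\overline{\nu}$, identifying the accumulated scalar with $\tilde\phi(\alpha) = S_\phi(\overline{\mu}\,\overline{\nu}) = c_\phi(\mu,\nu)$ via Lemma~\ref{lem:coboundary on transition graph}, and both handle universality by checking that the edge-restriction of a $(\Lambda,c_\phi)$-family is a $\phi$-representation using Theorem~\ref{thm:2-cocycle map}(\ref{it:on squares}). Your treatment is slightly more explicit about the inductions for (CK3), (CK4), and the final identification $\pi(t_\lambda)=t'_\lambda$ via Theorem~\ref{thm:2-cocycle map}(\ref{it:already preferred}), but the substance is the same.
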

\begin{proof}
Recall from Definition~\ref{dfn:transition graph} that $F_\Lambda$ denotes the transition graph
associated to $\Lambda$, and that for each $\lambda \in \Lambda$, the preferred factorisation
$\overline{\lambda}$ of $\lambda$ is the terminal vertex in the component $F_\lambda$ of
$F_\Lambda$ corresponding to $\lambda$. Since each $u \in F^0_\Lambda$ is a path $u = u_1 \cdots
u_n \in E_\Lambda$, we may define partial isometries $\{ \tau_u : u \in F^0_\Lambda, \ell(u) \ge
1\}$ by $\tau_u := s_{u_1} \cdots s_{u_n}$. Thus for $\lambda \in \Lambda$ with $d(\lambda) \ne 0$,
the definition of $t_\lambda$ given in the statement of the proposition can be restated as
$t_\lambda = \tau_{\overline{\lambda}}$. For $v \in \Lambda^0$, we define $t_v := p_v$.

Suppose that $(u,v)$ is an allowable transition in $\lambda$, say $u = u_1 \cdots u_{i-1}u_i
 \cdots u_n$ and $v = u_1 \cdots u_{i-2}v_{i-1}v_i u_{i+1} \cdots u_n$ with $d(u_i) =
d(v_{i-1}) = e_j$ and $d(u_{i-1}) = d(v_i) = e_l$ with $j > l$. Then relation~(\ref{it:new CK2}) of
Definition~\ref{def:twisted algebra} gives
\[
s_{u_1} \cdots s_{v_{i-1}} s_{v_i} \cdots s_{u_n}
    = \phi(u_{i-1} u_i) s_{u_1} \cdots s_{u_{i-1}} s_{u_i} \cdots s_{u_n}
\]
Hence, using the map $\tilde{\phi} : F^1_\Lambda \to \TT$ defined as in
Notation~\ref{ntn:phitilde}, we have
\[
\tau_v = \tilde{\phi}(u,v) \tau_u.
\]
So if $\alpha$ is a path in $F_\Lambda$, then $\tau_{s(\alpha)} = \tilde{\phi}(\alpha)
\tau_{r(\alpha)}$.

Fix $(\mu,\nu ) \in \Lambda^{*2}$ and $\alpha \in F_\Lambda$ with $r(\alpha) = \overline{\mu\nu}$
and $s(\alpha) = \overline{\mu}\,\overline{\nu}$. By the above
\[
\tilde{\phi}(\alpha) t_{\mu\nu}
    = \tilde{\phi}(\alpha) \tau_{\overline{\mu\nu}}
    = \tau_{\overline{\mu}\,\overline{\nu}}
    = \tau_{\overline{\mu}} \tau_{\overline{\nu}}
    = t_\mu t_\nu.
\]
Lemma~\ref{lem:coboundary on transition graph} implies that $\tilde{\phi}(\alpha) =
S_\phi(\overline{\mu}\,\overline{\nu})$, so, by the definition of $c_\phi$ in
Theorem~\ref{thm:2-cocycle map}, $c_\phi(\mu,\nu) t_{\mu\nu} = t_\mu t_\nu$. Thus $t : \Lambda \to
C^*_\phi(\Lambda)$ satisfies relation~(CK2) for the cocycle $c_\phi$. It trivially satisfies~(CK1),
and standard induction arguments establish (CK3)~and~(CK4). Hence $t$ is a Cuntz-Krieger
$(\Lambda,c_\phi)$-family.

Let  $\{t'_\lambda : \lambda \in \Lambda\}$  be a Cuntz-Krieger $(\Lambda,c_\phi)$-family in a
$C^*$-algebra $B$. We claim that $\{t'_v : v \in \Lambda^0\}$ and $\{t'_{\lambda} : \lambda \in
\bigsqcup^k_{i=1} \Lambda^{e_i} \}$ constitute a Cuntz-Krieger $\phi$-representation of $\Lambda$
in $B$. Relations (\ref{it:new CK1})~and~(\ref{it:new CK3}) are special cases of (CK3)~and~(CK4)
respectively. Let $\mu,\nu, \mu', \nu'$ be as in Definition~\ref{def:twisted algebra}(\ref{it:new
CK2}). By definition of $c_\phi$, we have
\[
    c_\phi(\nu',\mu') = \phi(\mu\nu)\quad\text{ and }\quad c_\phi(\mu,\nu) = 1.
\]
Hence
\[
t'_{\nu'} t'_{\mu'}
    = c_\phi(\nu', \mu') t'_{\nu'\mu'}
    = c_\phi(\nu',\mu') \overline{c_\phi(\mu,\nu)} t'_\mu t'_\nu
    = \phi(\mu\nu) t'_\mu t'_\nu.
\]
So the elements $\{t'_v : v \in \Lambda^0\}$ and $\{t'_\lambda : \lambda \in \bigsqcup^k_{i=1}
\Lambda^{e_i}\})$ satisfy the relations (\ref{it:new CK1})--(\ref{it:new CK3}).  By the universal
property of $C_\phi^* ( \Lambda )$ there is a homomorphism  $\pi: C^*_\phi(\Lambda) \to B$ such
that $\pi(t_\lambda) = t'_\lambda$ for $\lambda \in \Lambda^0 \cup \bigsqcup^k_{i=1}
\Lambda^{e_i}$. An induction shows that $\pi(t_\lambda) = t'_\lambda$ for all $\lambda \in
\Lambda$.
\end{proof}

Proposition~\ref{prp:old=new} shows that the twisted $C^*$-algebras associated to $\TT$-valued
$2$-cocycles in \cite{kps3} can be regarded as twisted $C^*$-algebras associated to the
corresponding categorical cocycles. But, while every categorical $2$-cocycle $c$ is cohomologous to
$c_\phi$ for some $\phi \in \Zcub{2}(\Lambda, \TT)$, it is not clear that every categorical
$2$-cocycle is equal to $c_\phi$ for some $\phi$.

\begin{ntn}\label{rmk:old twisted algebra}
Let $(\Lambda,d)$ be a row-finite $k$-graph with no sources, and fix $c \in \Zcat2(\Lambda,\TT)$.
Relations (CK1)~and~(CK3) imply that the images of elements of a Cuntz-Krieger $(\Lambda,c)$-family
under any $*$-homomorphism are partial isometries and hence have norm $1$ (or $0$). A standard
argument (see, for example, \cite[Propositions 1.20~and~1.21]{Raeburn:Graphalgebras05}) then shows
that there is a $C^*$-algebra $C^*(\Lambda,c)$ generated by a Cuntz-Krieger $(\Lambda,c)$-family $s
: \Lambda \to C^* (\Lambda ,c)$ which is universal in the sense that given any other Cuntz-Krieger
$(\Lambda,c)$-family $t$, there is a homomorphism $\pi_t : C^*(\Lambda,c) \to C^*(t)$ such that
$\pi_t \circ s = t$. This universal property determines $C^*(\Lambda,c)$ up to canonical
isomorphism.
\end{ntn}

The following remark reconciles the use of $s$ to denote the universal family in $C^*(\Lambda,
c_\phi)$ with the use of the same symbol to denote the universal family in $C^*_\phi(\Lambda)$.

\begin{rmk}\label{rmk:same old}
Let $\Lambda$ be a row-finite $k$-graph with no sources, and fix $\phi \in \Zcub{2}(\Lambda, \TT)$.
Let $c_\phi \in \Zcat2(\Lambda,\TT)$ be the corresponding categorical $2$-cocycle.
Proposition~\ref{prp:old=new} and that $C^*(\Lambda , c)$ is determined by its universal property
imply that there is an isomorphism $C^*(\Lambda, c_\phi) \to C^*_\phi(\Lambda)$ which carries each
generator of $C^*(\Lambda, c_\phi)$ associated to a vertex or an edge to the corresponding
generator of $C^*_\phi(\Lambda)$. We will henceforth identify $C^*(\Lambda, c_\phi)$ and
$C^*_\phi(\Lambda)$ via this isomorphism without comment.
\end{rmk}

\begin{prop}\label{prp:cohomologous isomorphic}
Let $\Lambda$ be a row-finite $k$-graph with no sources, fix $c_1, c_2 \in \Zcat2(\Lambda, \TT)$
and suppose that $c_1$ and $c_2$ are cohomologous, say $b \in \Ccat1(\Lambda,\TT)$ satisfies  $c_1 = (\dcat1b) c_2$. Denote by $s^{c_i}$ the universal Cuntz-Krieger $(\Lambda, c_i)$-family in
$C^*(\Lambda,c_i)$ for $i = 1,2$. Then there is an isomorphism $\pi : C^*(\Lambda, c_1) \to
C^*(\Lambda, c_2)$ satisfying $\pi(s^{c_1}_\lambda) = b(\lambda) s^{c_2}_\lambda$ for all $\lambda
\in \Lambda$. In particular, if $c \in \Bcat2(\Lambda,\TT)$, then $C^*(\Lambda,c) \cong
C^*(\Lambda)$.
\end{prop}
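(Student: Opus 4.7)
The approach is standard: construct candidate elements in $C^*(\Lambda, c_2)$ that ought to satisfy the $(\Lambda, c_1)$-relations, invoke the universal property to obtain $\pi$, and then use symmetry to produce an inverse.

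First I would define $t_\lambda := b(\lambda) s^{c_2}_\lambda$ for each $\lambda \in \Lambda$ and check that $\{t_\lambda : \lambda \in \Lambda\}$ is a Cuntz-Krieger $(\Lambda, c_1)$-family. Since $b \in \Ccat1(\Lambda, \TT)$ is normalised, $b(v) = 1$ for every $v \in \Lambda^0$, so $t_v = s^{c_2}_v$; this makes (CK1) immediate, and (CK4) follows because $|b(\lambda)| = 1$ causes the scalars to cancel in $t_\lambda t^*_\lambda$. Relation (CK3) is similar. The key relation is (CK2), which reduces to verifying
\[
b(\mu) b(\nu) c_2(\mu, \nu) = c_1(\mu, \nu) b(\mu\nu)
\]
whenever $s(\mu) = r(\nu)$; but this is exactly the hypothesis $c_1 = (\dcat1 b) c_2$ written in multiplicative form, using the $\TT$-valued version of $(\dcat1 b)(\mu, \nu) = b(\mu) b(\nu) / b(\mu\nu)$.

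Once (CK1)--(CK4) are established, the universal property of $C^*(\Lambda, c_1)$ (Notation~\ref{rmk:old twisted algebra}) produces a homomorphism $\pi : C^*(\Lambda, c_1) \to C^*(\Lambda, c_2)$ with $\pi(s^{c_1}_\lambda) = b(\lambda) s^{c_2}_\lambda$. To see $\pi$ is an isomorphism, I would observe that $\overline{b} \in \Ccat1(\Lambda, \TT)$ is also normalised and that $c_2 = (\dcat1 \overline{b}) c_1$. Applying the construction with roles reversed yields a homomorphism $\pi' : C^*(\Lambda, c_2) \to C^*(\Lambda, c_1)$ with $\pi'(s^{c_2}_\lambda) = \overline{b(\lambda)} s^{c_1}_\lambda$. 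The compositions $\pi' \circ \pi$ and $\pi \circ \pi'$ fix the generators $s^{c_1}_\lambda$ and $s^{c_2}_\lambda$ respectively, so they are the identity maps by universality, and $\pi$ is an isomorphism.

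For the final assertion, if $c \in \Bcat2(\Lambda, \TT)$, then $c$ is cohomologous to the trivial cocycle $1 : (\mu,\nu) \mapsto 1$, for which the Cuntz-Krieger $(\Lambda, 1)$-relations are literally the usual Cuntz-Krieger $\Lambda$-relations, so $C^*(\Lambda, 1) = C^*(\Lambda)$; the first part of the proposition then gives $C^*(\Lambda, c) \cong C^*(\Lambda)$. There is no genuine obstacle here: the only thing requiring care is the translation between the additive cocycle formalism used throughout the paper and the multiplicative notation adopted for $\TT$, and the consistent use of normalisation to ensure $t_v = s^{c_2}_v$ on vertices.
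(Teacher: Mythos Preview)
Your proposal is correct and follows essentially the same approach as the paper: define $t_\lambda = b(\lambda)s^{c_2}_\lambda$, verify the $(\Lambda,c_1)$ relations (with (CK2) reducing to the coboundary identity and the others being cocycle-independent), invoke universality, and build the inverse via the symmetric construction with $\overline{b}$. The paper's proof is slightly terser about (CK1), (CK3), (CK4), simply noting they do not involve the cocycle, but your more explicit use of normalisation and $|b(\lambda)|=1$ amounts to the same observation.
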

\begin{proof}
For $(\mu,\nu ) \in \Lambda^{*2}$ we have
$c_1(\mu,\nu) = b(\mu)b(\nu)\overline{b(\mu\nu)}c_2(\mu,\nu)$,
and hence
\[
    b(\mu)b(\nu)c_2(\mu,\nu) = b(\mu\nu)c_1(\mu,\nu).
\]
Hence relation~(CK2) gives
\[
b(\mu) s^{c_2}_\mu b(\nu) s^{c_2}_\nu
    = b(\mu)b(\nu) \big( c_2(\mu,\nu) s^{c_2}_{\mu\nu} \big)
    = c_1(\mu,\nu) \big(b(\mu\nu) s^{c_2}_{\mu\nu}\big).
\]
Since relations (CK1), (CK3)~and~(CK4) do not depend on the cocycle $c_1$, it follows that the
function $t : \Lambda \to C^* ( \Lambda , c_2 )$ defined by  $\lambda \mapsto
b(\lambda)s^{c_2}_\lambda$ is a Cuntz-Krieger $(\Lambda,c_1)$-family, so the universal property of
$C^*(\Lambda, c_1)$ yields a homomorphism $\pi : C^*(\Lambda, c_1) \to C^*(\Lambda,c_2)$ satisfying
$\pi(s^{c_1}_\lambda) = b(\lambda) s^{c_2}_\lambda$ for all $\lambda \in \Lambda$. The symmetric
argument yields a homomorphism $\phi : C^*(\Lambda, c_2) \to C^*(\Lambda,c_1)$ which is an inverse
for $\pi$ on generators. Hence $\pi$ is an isomorphism as claimed.

For the final assertion, note that if $c$ is a coboundary, then
it is cohomologous to the trivial cocycle $1 \in
\Zcat2(\Lambda,\TT)$ given by $1(\mu,\nu) = 1$ for all
$(\mu,\nu) \in \Lambda^{*2}$. Since $C^*(\Lambda,1)$ is universal for the same
relations as the $k$-graph $C^*$-algebra $C^*(\Lambda)$ of
\cite{KP2000}, we have $C^*(\Lambda,1) \cong C^*(\Lambda)$. Hence
$C^*(\Lambda,c) \cong C^*(\Lambda)$ by the preceding paragraph.
\end{proof}
By Theorem \ref{thm:2-cohomology iso}, the map $[\phi] \mapsto [c_\phi]$ is an
isomorphism $\Hcub2(\Lambda,A) \cong \Hcat2(\Lambda,A)$. Combining Proposition \ref{prp:old=new}
and Proposition \ref{prp:cohomologous isomorphic} we obtain the following.
\begin{cor}
Let $\Lambda$ be a row-finite $k$-graph with no sources, and fix $c \in \Zcat2(\Lambda,\TT)$. Let
$\phi \in \Zcub2(\Lambda,\TT)$ be a $2$-cocycle such that $c_\phi$ is cohomologous to $c$.
Then $C^*(\Lambda,c)  \cong C^*_\phi(\Lambda)$.
\end{cor}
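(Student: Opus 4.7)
The plan is to combine the three results that immediately precede the corollary: Theorem~\ref{thm:2-cohomology iso}, Proposition~\ref{prp:old=new} (together with Remark~\ref{rmk:same old}), and Proposition~\ref{prp:cohomologous isomorphic}. Since the statement hands us $\phi \in \Zcub2(\Lambda,\TT)$ with $c_\phi$ cohomologous to $c$, no existence argument is required --- that $\phi$ exists at all is guaranteed by Theorem~\ref{thm:2-cohomology iso}, but for the proof of the corollary we simply use the hypothesis.

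First, I would apply Proposition~\ref{prp:cohomologous isomorphic} to the pair $c$ and $c_\phi$. By hypothesis there exists $b \in \Ccat1(\Lambda, \TT)$ with $c = (\dcat1 b) c_\phi$, so the proposition yields an isomorphism
\[
\pi_1 : C^*(\Lambda, c) \xrightarrow{\;\cong\;} C^*(\Lambda, c_\phi)
\]
determined on generators by $s^{c}_\lambda \mapsto b(\lambda)\, s^{c_\phi}_\lambda$.

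Next, I would invoke Proposition~\ref{prp:old=new} which, as recorded in Remark~\ref{rmk:same old}, supplies a canonical isomorphism
\[
\pi_2 : C^*(\Lambda, c_\phi) \xrightarrow{\;\cong\;} C^*_\phi(\Lambda)
\]
identifying the vertex and edge generators on each side. Composing, $\pi_2 \circ \pi_1$ is the desired isomorphism $C^*(\Lambda, c) \cong C^*_\phi(\Lambda)$.

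There is essentially no obstacle here --- the corollary is a bookkeeping consequence of the machinery already in place. The only thing worth being careful about is that Proposition~\ref{prp:cohomologous isomorphic} is stated for categorical cocycles on row-finite $k$-graphs with no sources, which matches exactly the running hypothesis of this corollary, so no adjustments to the setting are needed.
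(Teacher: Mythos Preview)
Your proposal is correct and matches the paper's own argument exactly: the corollary is obtained by composing the isomorphism $C^*(\Lambda,c)\cong C^*(\Lambda,c_\phi)$ from Proposition~\ref{prp:cohomologous isomorphic} with the identification $C^*(\Lambda,c_\phi)\cong C^*_\phi(\Lambda)$ of Proposition~\ref{prp:old=new} and Remark~\ref{rmk:same old}. Your remark that Theorem~\ref{thm:2-cohomology iso} is only needed to guarantee the existence of such a $\phi$, not for the proof itself, is also exactly how the paper frames it.
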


The preceding corollary gives another proof that if $\phi, \psi \in \Zcub{2}(\Lambda, \TT)$ are
cohomologous, then $C^*_\psi(\Lambda) \cong C^*_\phi(\Lambda)$ (see \cite[Proposition~7.6]{kps3}).

\section{Twisted groupoid \texorpdfstring{$C^*$}{C*}-algebras}

Let $\Lambda$ be a row-finite $k$-graph with no sources. Let $\Gg_\Lambda$ be the $k$-graph
groupoid of \cite{KP2000} (see Definition~\ref{dfn:path-groupoid} below) and let
$\Hgpd2(\Gg_\Lambda, \cdot )$ denote the continuous cocycle cohomology used in \cite{Renault1980}
(see Remark~\ref{rmk:groupoid cohomology}). Given a  categorical $2$-cocycle $c$  on $\Lambda$ we
construct a $2$-cocycle $\sigma_c$ on the groupoid $\Gg_\Lambda$. Given a locally compact abelian
group $A$ we show that $c \mapsto \sigma_c$ determines a homomorphism $\Hcat2(\Lambda, A) \to
\Hgpd2(\Gg_\Lambda, A)$. If $c$ is $\TT$-valued, we show that there is a canonical homomorphism
from the twisted $k$-graph $C^*$-algebra $C^*(\Lambda,c)$ to Renault's twisted groupoid algebra
$C^*(\Gg_\Lambda, \sigma_c)$ (see \cite{Renault1980}). We show in \S \ref{sec:giut} that this
homomorphism is an isomorphism.

We denote by $\Lambda \mathbin{{_s*_s}} \Lambda$ the set $\{(\mu,\nu) \in \Lambda \times \Lambda :
s(\mu) = s(\nu)\}$. Recall the definition of $\Lambda^\infty$ given in \cite[Definition
2.1]{KP2000}: we write $\Omega_k$ for the $k$-graph $\{(m,n) \in \NN^k : m \le n\}$ with $r(m,n) =
m$, $s(m,n) = n$, $(m,n)(n,p) = (m,p)$ and $d(m,n) = n - m$, and we define $\Lambda^\infty$ to be
the collection of all $k$-graph morphisms $x : \Omega_k \to \Lambda$. For $p \in \NN^k$, we define
$\sigma^p : \Lambda^\infty \to \Lambda^\infty$ by $(\sigma^p x)(m,n) := x(m+p, n+p)$ for all $(m,n)
\in \Omega_k$. For $x \in \Lambda^\infty$ we denote $x(0)$ by $r(x)$.

\begin{dfn}[{\cite[Definition~2.7]{KP2000}}]\label{dfn:path-groupoid}
Let $\Lambda$ be a row-finite $k$-graph with no sources. Let
\[
\Gg_\Lambda :=
    \{ (x, \ell - m, y) \in \Lambda^\infty \times \ZZ^k \times \Lambda^\infty : \ell, m \in \NN^k, \sigma^\ell x = \sigma^m y\}.
\]
For $\mu, \nu \in \Lambda$ with $s(\mu) = s(\nu)$ define
$Z(\mu, \nu) \subset \Gg_\Lambda$ by
\[
Z(\mu, \nu) := \{ (\mu x, d(\mu) - d(\nu), \nu x) : x \in \Lambda^\infty, r(x) = s(\mu)\}.
\]
For $\lambda \in \Lambda$, we define $Z(\lambda) := Z(\lambda, \lambda)$.
\end{dfn}

The sets $Z(\mu, \nu)$ form a basis of compact open sets for a locally compact Hausdorff topology
on $\Gg_\Lambda$ under which it is an \'{e}tale groupoid \noindent with structure maps $r
(x,\ell-m,y) = (x,0,x)$, $s ( x , \ell-m,y)= (y,0,y)$, and $(x,\ell-m,y) (y,p-q,z) = (x ,
\ell-m+p-q,z)$. (see \cite[Proposition 2.8]{KP2000}). The $Z(\lambda)$ are then a basis for the
relative topology on $\Gg_\Lambda^{(0)}$. We shall identify $\Gg_\Lambda^{(0)} = \{ (x,0,x) : x \in
\Lambda^\infty \}$ with $\Lambda^\infty$.

\begin{ntn}\label{ntn:dtilde}
We write $\tilde{d}$ for the continuous $\ZZ^k$-valued $1$-cocycle on $\Gg_\Lambda$ induced by the
degree map on $\Lambda$. That is, $\tilde{d} (x,m,y) = m$.
\end{ntn}

Our next two results show how to use an appropriate partition of $\Gg_\Lambda$ to construct a
continuous $2$-cocycle on $\Gg_\Lambda$ (see Remark~\ref{rmk:groupoid cohomology}) from a
categorical $2$-cocycle on $\Lambda$.

\begin{lem}\label{lem:groupoid-2-cocycle}
Let $\Lambda$ be a row-finite $k$-graph with no sources, let $A$ be an abelian group and let $c \in
\Zcat2(\Lambda,A)$. Suppose that $\Pp$ is a subset of $\Lambda \mathbin{{_s*_s}} \Lambda$
such that $\{Z(\mu,\nu) : (\mu,\nu) \in \Pp\}$ is a partition of $\Gg_\Lambda$. For each $g \in
\Gg_\Lambda$, let $(\mu_g,\nu_g)$ be the unique element of $\Pp$ such that $g \in Z(\mu_g, \nu_g)$.
\begin{enumerate}\renewcommand{\theenumi}{\roman{enumi}}
\item\label{it:alphas} For each $(g, h) \in \Gg_\Lambda^{(2)}$, there exist $\alpha \in
    s(\mu_g)\Lambda$, $\beta \in s(\mu_h)\Lambda$, $\gamma \in s(\mu_{gh})\Lambda$ and $y \in
    \Lambda^\infty$ such that all of the following hold: $r(y) = s(\alpha) =  s(\beta) = s(\gamma)$; $\mu_g\alpha = \mu_{gh}\gamma$, $\nu_{h}\beta = \nu_{gh}\gamma$ and $\nu_{g}\alpha = \mu_{h}\beta$; and
    \begin{equation}\label{eq:alphbetgam}
    \begin{split}
        g = (\mu_g\alpha y, d ( \mu_g ) &{}- d ( \nu_g ) , \nu_g\alpha y),\quad
        h = (\mu_h\beta y, d( \mu_h) - d ( \nu_h ) , \nu_h\beta y),\quad\text{and} \\
        &gh = (\mu_{gh}\gamma y, d(\mu_{gh}) - d(\nu_{gh}),
        \nu_{gh}\gamma y).
    \end{split}
    \end{equation}
\item\label{it:G-cocycle formula} Fix $(g,h) \in \Gg_\Lambda^{(2)}$ and $\alpha,\beta,\gamma$
    satisfying~\eqref{eq:alphbetgam}. The formula
    \begin{equation}\label{eq:groupoid-cocycle-formula}
    \big(c(\mu_{g}, \alpha) - c(\nu_{g}, \alpha)\big) + \big(c(\mu_{h}, \beta) - c(\nu_{h}, \beta)\big)
        - \big(c(\mu_{gh}, \gamma) - c(\nu_{gh}, \gamma)\big)
    \end{equation}
    does not depend on the choice of $\alpha$, $\beta$, $\gamma$.
\item\label{it:gpd cocycle} For $(g,h) \in \Gg_\Lambda^{(2)}$, define $\sigma_c(g,h)$ to be the
    value of~\ref{eq:groupoid-cocycle-formula} for any choice of $\alpha, \beta, \gamma$
    satisfying~\eqref{eq:alphbetgam}. Then $\sigma_c$ is a continuous groupoid $2$-cocycle.
\end{enumerate}
\end{lem}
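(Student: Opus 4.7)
For part~(\ref{it:alphas}), I would unpack the definition of $Z(\mu_g, \nu_g)$: every $g \in Z(\mu_g, \nu_g)$ has the form $g = (\mu_g x, d(\mu_g) - d(\nu_g), \nu_g x)$ for a unique $x \in \Lambda^\infty$, and similarly $h$ determines $x' \in \Lambda^\infty$ and $gh$ determines $z \in \Lambda^\infty$. Composability of $(g,h)$ in $\Gg_\Lambda$, together with the definition of composition, gives $\nu_g x = \mu_h x'$, $\mu_g x = \mu_{gh} z$ and $\nu_h x' = \nu_{gh} z$. Choosing $N \in \NN^k$ componentwise large and setting $\alpha := x(0, N - d(\mu_g))$, $\gamma := z(0, N - d(\mu_{gh}))$, $\beta := x'(0, N - d(\mu_g) + d(\nu_g) - d(\mu_h))$ and $y := \sigma^{N - d(\mu_g)}x$, the required identities $\mu_g\alpha = \mu_{gh}\gamma$, $\nu_g\alpha = \mu_h\beta$, $\nu_h\beta = \nu_{gh}\gamma$ and~\eqref{eq:alphbetgam} all fall out of the factorisation property.

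For part~(\ref{it:G-cocycle formula}), any two valid choices admit a common refinement: if $\alpha' = \alpha\delta$ for some $\delta$ with $r(\delta) = s(\alpha)$, then the identifications from~(i) and the factorisation property force $\beta' = \beta\delta$, $\gamma' = \gamma\delta$ and $y' = \sigma^{d(\delta)}y$, so it suffices to check that~\eqref{eq:groupoid-cocycle-formula} is unchanged under this extension. Expanding each of the six summands via the categorical cocycle identity for $c$ in the form $c(\mu, \alpha\delta) = c(\mu, \alpha) + c(\mu\alpha, \delta) - c(\alpha, \delta)$, the three pairs $\pm c(\alpha,\delta)$, $\pm c(\beta,\delta)$, $\pm c(\gamma,\delta)$ cancel within each parenthesised difference, while the six remaining terms $\pm c(\lambda,\delta)$ with $\lambda \in \{\mu_g\alpha, \nu_g\alpha, \mu_h\beta, \nu_h\beta, \mu_{gh}\gamma, \nu_{gh}\gamma\}$ cancel in pairs thanks to the identifications of~(i).

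Part~(\ref{it:gpd cocycle}) splits into continuity and the cocycle identity. Continuity (in fact local constancy) is immediate: given $(g,h)$ with valid choice $(\alpha, \beta, \gamma, y)$, the open neighbourhood $(Z(\mu_g\alpha, \nu_g\alpha) \times Z(\mu_h\beta, \nu_h\beta)) \cap \Gg_\Lambda^{(2)}$ of $(g,h)$ in $\Gg_\Lambda^{(2)}$ consists of composable pairs whose product lies in $Z(\mu_{gh}\gamma, \nu_{gh}\gamma) \subseteq Z(\mu_{gh}, \nu_{gh})$, so $(\alpha,\beta,\gamma)$ remains a valid choice throughout and~\eqref{eq:groupoid-cocycle-formula} is constant. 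For the cocycle identity $\sigma_c(g,h) + \sigma_c(gh,k) = \sigma_c(h,k) + \sigma_c(g,hk)$, I would fix $(g,h,k) \in \Gg_\Lambda^{(3)}$ and apply~(\ref{it:alphas}) with a single deep cutoff to simultaneously pick prefixes $a_g, a_h, a_k, a_{gh}, a_{hk}, a_{ghk}$ (attached to $g, h, k, gh, hk, ghk$) satisfying the relations of~(i) for all four composable pairs $(g,h), (gh,k), (h,k), (g,hk)$; the factorisation property forces such consistency and, in particular, equates the auxiliary prefix of $ghk$ arising from $(gh,k)$ with that arising from $(g,hk)$. Substituting, both $\sigma_c(g,h) + \sigma_c(gh,k)$ (where the $a_{gh}$-contributions cancel) and $\sigma_c(h,k) + \sigma_c(g,hk)$ (where the $a_{hk}$-contributions cancel) telescope to a common expression involving only $a_g, a_h, a_k, a_{ghk}$. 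The main obstacle is the bookkeeping here: organising the six auxiliary prefixes so that one cutoff $N$ makes them simultaneously consistent across all four composable pairs, and matching signs carefully in the telescoping.
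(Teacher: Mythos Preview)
Your proposal is correct and follows essentially the same route as the paper: part~(\ref{it:alphas}) via a deep enough cutoff $N$ and the factorisation property, part~(\ref{it:G-cocycle formula}) via reduction to a common refinement and expansion using the cocycle identity (the paper writes the identity as $c(\mu_g,\alpha\delta) - c(\nu_g,\alpha\delta) = c(\mu_g\alpha,\delta) - c(\nu_g\alpha,\delta) + c(\mu_g,\alpha) - c(\nu_g,\alpha)$, which is your expansion with the $\pm c(\alpha,\delta)$ terms already cancelled), and part~(\ref{it:gpd cocycle}) via a single simultaneous choice of tails for all six subscripts $g,h,k,gh,hk,ghk$ followed by telescoping. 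The only small omission is that you do not verify the normalisation condition $\sigma_c(g,h)=0$ when $g$ or $h$ is a unit, which the paper notes as a straightforward check.
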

\begin{proof}
Recall that $\tilde{d} : \Gg_\Lambda \to \ZZ^k$ is given by $\tilde{d}(x,m,y) = m$. Let $N
:= d ( \mu_g ) \vee d ( \mu_{gh} ) \vee ( d ( \mu_h ) - \tilde{d} (g) )$. Then routine calculations
show that $\alpha := r ( g) ( d ( \mu_g ) , N )$, $\gamma := r (gh) ( d ( \mu_{gh} ) , N )$, $\beta
:= r(h) ( d ( \mu_h ) , N + \tilde{d} (g) )$ and $y = \sigma^N ( r(g) )$ have the desired
properties.

Fix $\alpha$, $\beta$, $\gamma$ and $\alpha'$, $\beta'$, $\gamma'$
satisfying~\eqref{eq:alphbetgam}. Let
\[
    \delta := r(g)(d(\mu_g\alpha), d(\mu_g\alpha) \vee d(\mu_g\alpha'))
    \quad\text{ and }\quad
    \varepsilon := r(g)(d(\mu_g\alpha'), d(\mu_g\alpha) \vee d(\mu_g\alpha'))
\]
Then $\alpha\delta = \alpha'\varepsilon$, $\beta\delta = \beta'\varepsilon$ and $\gamma\delta =
\gamma'\varepsilon$ satisfy~\eqref{eq:alphbetgam}. So by symmetry, it suffices to show that
replacing $\alpha$ with $\alpha\delta$, $\beta$ with $\beta\delta$ and $\gamma$ with $\gamma\delta$
in~\eqref{eq:groupoid-cocycle-formula} yields the same value. Since $c$ is a categorical
$2$-cocycle,
\begin{align*}
c(\mu_g, \alpha \delta) - c(\nu_{g}, \alpha \delta)
    &= c(\mu_g\alpha , \delta) - c(\nu_{g}\alpha , \delta) + c(\mu_g, \alpha ) - c(\nu_{g}, \alpha ), \\
c(\mu_h, \beta\delta) - c(\nu_{h}, \beta\delta)
    &= c(\mu_h\beta, \delta) - c(\nu_{h}\beta, \delta) + c(\mu_h, \beta) - c(\nu_{h}, \beta),\quad\text{ and} \\
c(\mu_{gh}, \gamma\delta) - c(\nu_{gh}, \gamma\delta)
    &= c(\mu_{gh}\gamma, \delta) - c(\nu_{gh}\gamma, \delta) + c(\mu_{gh}, \gamma) - c(\nu_{gh}, \gamma).
\end{align*}
Hence
\begin{align*}
\big(c(\mu_{g}, \alpha \delta)&{} - c(\nu_{g}, \alpha \delta)\big)  +
        \big(c(\mu_{h}, \beta \delta) - c(\nu_{h}, \beta \delta)\big)  -
        \big(c(\mu_{gh}, \gamma \delta) - c(\nu_{gh}, \gamma \delta)\big) \\
    &= c(\mu_{g}\alpha , \delta) - c(\nu_{g}\alpha , \delta) + c(\mu_{g}, \alpha ) - c(\nu_{g}, \alpha ) \\
    &\qquad {} + c(\mu_{h}\beta , \delta) - c(\nu_{h}\beta , \delta) + c(\mu_{h}, \beta ) - c(\nu_{h}, \beta ) \\
    &\qquad\qquad {} - c(\mu_{gh}\gamma , \delta) + c(\nu_{gh}\gamma , \delta) - c(\mu_{gh}, \gamma ) + c(\nu_{gh}, \gamma).
\end{align*}
Since $\mu_{g}\alpha = \mu_{gh}\gamma $, $\nu_{h}\beta  = \nu_{gh}\gamma $ and $\nu_{g}\alpha =
\mu_{h}\beta$, we obtain
\begin{align*}
\big(c(\mu_{g}, \alpha \delta)&{} - c(\nu_{g}, \alpha \delta)\big)
        + \big(c(\mu_{h}, \beta \delta) - c(\nu_{h}, \beta \delta)\big)
        - \big(c(\mu_{gh}, \gamma \delta) - c(\nu_{gh}, \gamma \delta)\big) \\
    &= \big(c(\mu_{g}, \alpha ) - c(\nu_{g}, \alpha )\big) + \big(c(\mu_{h}, \beta) - c(\nu_{h}, \beta)\big)
        - \big( c(\mu_{gh}, \gamma ) - c(\nu_{gh}, \gamma )\big).
\end{align*}
So~\eqref{eq:groupoid-cocycle-formula} does not depend on the choice of $\alpha, \beta, \gamma$.

To prove that $\sigma_c$ is a $2$-cocycle, fix $(g_1, g_2, g_3) \in \Gg_\Lambda^{(3)}$. Let
$(\mu_i, \nu_i) = (\mu_{g_i}, \nu_{g_i})$ for $i = 1,2,3$. Let $(\mu_{ij},\nu_{ij}) =
(\mu_{g_ig_j}, \nu_{g_ig_j})$ for $ij = 12, 23$ and let $(\mu_{123}, \nu_{123}) = (\mu_{g_1g_2g_3},
\nu_{g_1g_2g_3})$. Fix $z \in \Lambda^\infty$, and for each symbol $\star \in \{1, 2, 3, 12, 23,
123\}$, fix $\alpha_\star \in \Lambda$ such that
\[
    g_\star = (\mu_\star\alpha_\star z, d(\mu_\star) - d(\nu_\star), \nu_\star\alpha_\star z).
\]
Then~\eqref{eq:groupoid-cocycle-formula} yields
\begin{align*}
\sigma_c(g_1, g_2) &{}+ \sigma_c(g_1g_2, g_3)\\
    &= \Big(\big(c(\mu_{1}, \alpha_1) - c(\nu_{1}, \alpha_1)\big) + \big(c(\mu_{2}, \alpha_2) - c(\nu_{2}, \alpha_2)\big)\\
    &\qquad{} - \big( c(\mu_{{12}}, \alpha_{12}) - c(\nu_{{12}}, \alpha_{12})\big)\Big) + \Big(\big(c(\mu_{{12}}, \alpha_{12}) - c(\nu_{{12}}, \alpha_{12})\big) \\
    &\qquad{} + \big(c(\mu_{3}, \alpha_3) - c(\nu_{3}, \alpha_3)\big) - \big( c(\mu_{{123}}, \alpha_{123}) - c(\nu_{{123}}, \alpha_{123})\big)\Big)
\end{align*}
and
\begin{align*}
\sigma_c(g_2, g_3) &{}+ \sigma_c(g_1, g_2 g_3)\\
    &= \Big(\big(c(\mu_{2}, \alpha_2) - c(\nu_{2}, \alpha_2)\big) + \big(c(\mu_{3}, \alpha_3) - c(\nu_{3}, \alpha_3)\big)\\
    & \qquad{} - \big( c(\mu_{23}, \alpha_{23}) - c(\nu_{23}, \alpha_{23})\big)\Big) + \Big(\big(c(\mu_{1}, \alpha_1) - c(\nu_{1}, \alpha_1)\big)\\
    & \qquad{} + \big(c(\mu_{{23}}, \alpha_{23}) - c(\nu_{{23}}, \alpha_{23})\big) -
        \big( c(\mu_{{123}}, \alpha_{123}) - c(\nu_{{123}}, \alpha_{123})\big)\Big).
\end{align*}
Cancelling and comparing gives
\[
\sigma_c(g_1, g_2) + \sigma_c(g_1g_2, g_3) =
\sigma_c(g_2, g_3) + \sigma_c(g_1, g_2 g_3).
\]
Hence, $\sigma_c$ satisfies the groupoid $2$-cocycle identity. A straightforward calculation shows
that $c(g, h) = 0$ if either $g$ or $h$ is a unit. Since  $\sigma_c$ is locally constant, it is
continuous, so $\sigma_c \in \Zgpd2(\Gg_\Lambda, A)$.
\end{proof}

\begin{rmk}
The cocycle $\sigma_c$ constructed in Lemma~\ref{lem:groupoid-2-cocycle}(\ref{it:gpd cocycle})
depends both on $c$ and on the collection $\Pp$.
\end{rmk}

\begin{thm}\label{thm:graph->gpd cohomology}
Let $\Lambda$ be a row-finite $k$-graph with no sources and let $A$ be an abelian group. Suppose
that $\Pp$ is a subset of $\Lambda \mathbin{{_s*_s}} \Lambda$ such that $\{Z(\mu,\nu) :
(\mu,\nu) \in \Pp\}$ is a partition of $\Gg_\Lambda$. Fix $c \in \Zcat2(\Lambda,A)$, and let
$\sigma_c \in \Zgpd2(\Gg_\Lambda, A)$ be the continuous cocycle of
Lemma~\ref{lem:groupoid-2-cocycle}. The cohomology class $[\sigma_c]$ is independent of the choice
of $\Pp$ and depends only on the cohomology class of $c$. Moreover, $[c] \mapsto [\sigma_c]$ is a
homomorphism $\Hcat2(\Lambda,A) \to \Hgpd2(\Gg_\Lambda, A)$.
\end{thm}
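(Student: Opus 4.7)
The plan has three pieces: (i) for fixed $\Pp$, the class $[\sigma_c] \in \Hgpd2(\Gg_\Lambda, A)$ depends only on $[c] \in \Hcat2(\Lambda, A)$; (ii) $[\sigma_c]$ is also independent of the partition $\Pp$; and (iii) the resulting map $[c] \mapsto [\sigma_c]$ is additive. Claim (iii) is essentially automatic, since for fixed $\Pp$ the formula~\eqref{eq:groupoid-cocycle-formula} defining $\sigma_c(g,h)$ is $\ZZ$-linear in $c$, so $\sigma_{c_1 + c_2} = \sigma_{c_1} + \sigma_{c_2}$ already at the level of $2$-cocycles; combined with (i) and (ii), this yields a well-defined homomorphism on cohomology classes.

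For (i), fix $\Pp$ and suppose $c' = c + \dcat1 f$ for some $f \in \Ccat1(\Lambda, A)$. I will define $F : \Gg_\Lambda \to A$ by $F(g) := f(\mu_g) - f(\nu_g)$, where $(\mu_g, \nu_g) \in \Pp$ is the unique pair with $g \in Z(\mu_g, \nu_g)$. Because each $Z(\mu, \nu)$ is clopen, $F$ is locally constant; and because $Z(\mu, \nu) \cap \Gg_\Lambda^{(0)} \ne \emptyset$ forces $\mu = \nu$ by the factorisation property, $F$ vanishes on units, so is a normalised continuous $1$-cochain on $\Gg_\Lambda$. Using $(\dcat1 f)(\mu, \nu) = f(\nu) - f(\mu\nu) + f(\mu)$ together with the path-identities $\mu_g \alpha = \mu_{gh}\gamma$, $\nu_h\beta = \nu_{gh}\gamma$, and $\nu_g\alpha = \mu_h\beta$ furnished by Lemma~\ref{lem:groupoid-2-cocycle}(\ref{it:alphas}), direct expansion of $\sigma_{c'}(g,h) - \sigma_c(g,h)$ telescopes to $F(g) + F(h) - F(gh) = (\dgpd1 F)(g,h)$, so $[\sigma_c] = [\sigma_{c'}]$.

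For (ii), given two partitions $\Pp$ and $\Pp'$, for each $g$ I will use Lemma~\ref{lem:groupoid-2-cocycle}(\ref{it:alphas}) to choose $\tau_g, \tau'_g \in \Lambda$ with $\mu_g\tau_g = \mu'_g\tau'_g$ and $\nu_g\tau_g = \nu'_g\tau'_g$, and define
\[
b(g) := \big(c(\mu_g, \tau_g) - c(\nu_g, \tau_g)\big) - \big(c(\mu'_g, \tau'_g) - c(\nu'_g, \tau'_g)\big).
\]
Independence of $b(g)$ from the choice of $\tau_g, \tau'_g$ (and hence local constancy of $b$) will follow from the cocycle identity~\eqref{eq:cocyleid} by the same telescoping argument used in the proof of Lemma~\ref{lem:groupoid-2-cocycle}(\ref{it:G-cocycle formula}); normalisation on units is exactly as in (i). The remaining step is to choose a single $y \in \Lambda^\infty$ and a single sufficiently large $N \in \NN^k$ so that common extensions $\alpha, \beta, \gamma$ for $\Pp$ and $\alpha', \beta', \gamma'$ for $\Pp'$ witnessing~\eqref{eq:alphbetgam} are all restrictions of $y$, and then to verify that $\sigma_c^{\Pp}(g,h) - \sigma_c^{\Pp'}(g,h) = (\dgpd1 b)(g,h)$. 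I expect this final verification to be the main obstacle: it amounts to juggling twelve instances of~\eqref{eq:cocyleid} (one for each of the six paths $\mu_\star\alpha_\star, \nu_\star\alpha_\star$ with $\star \in \{g, h, gh\}$, together with their primed counterparts) and arranging the resulting terms to cancel in the right pattern. Committing up front to the single tail $y$ will keep all twelve extensions inside one commuting diagram and should reduce the argument to a purely algebraic cancellation.
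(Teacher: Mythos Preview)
Your plan for (i) and (iii) matches the paper's argument essentially line-for-line: the paper also defines $a(g) = b(\mu_g) - b(\nu_g)$ (your $F$) when $c = \dcat1 b$ and checks $\sigma_c = \dgpd1 a$ by exactly the telescoping you describe, and it likewise observes that $c \mapsto \sigma_c$ is linear for fixed $\Pp$.

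The genuine divergence is in (ii). You propose to compare two arbitrary partitions $\Pp$ and $\Pp'$ directly, defining $b(g)$ via a pair of common extensions $\tau_g, \tau'_g$ and then carrying out a twelve-term cancellation. This will work, but the paper sidesteps most of the labor by first passing to the common refinement $\Pp \vee \Qq$ (built from the intersections $Z(\mu,\nu) \cap Z(\sigma,\tau)$, each of which decomposes as a finite disjoint union of cylinder sets). This reduces the problem to the special case where $\Qq$ refines $\Pp$, so that for each $g$ there is a \emph{unique} $\lambda_g$ with $(\eta_g, \zeta_g) = (\mu_g\lambda_g, \nu_g\lambda_g)$. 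The cobounding $1$-cochain is then simply $b(g) = c(\mu_g, \lambda_g) - c(\nu_g, \lambda_g)$: it is manifestly locally constant (constant on $Z(\eta_g, \zeta_g)$) with no well-definedness check required, and the verification that $\sigma^\Pp_c - \sigma^\Qq_c = \dgpd1 b$ needs only one application of the cocycle identity per pair $(\mu_\star, \lambda_\star)$ rather than the juggling you anticipate. What your direct approach buys is avoiding the construction of $\Pp \vee \Qq$; what the paper's refinement trick buys is a shorter and more transparent endgame with no choices to justify.
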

\begin{proof}
Suppose that $c$ is a categorical $2$-coboundary on $\Lambda$. We show that  $\sigma_c$ is a
groupoid $2$-coboundary on $\Gg_\Lambda$.  Since $c$ is a categorical $2$-coboundary, there is a
cochain $b \in \Ccat{1}(\Lambda, A)$, such that $c = \dcat1b$.  Hence, for $(\lambda_1, \lambda_2)
\in \Lambda^{*2}$ we have
\[
c(\lambda_1, \lambda_2)  =  (\dcat1b)(\lambda_1, \lambda_2)
                        = b(\lambda_1) - b(\lambda_1\lambda_2) + b(\lambda_2).
\]
Define $a : \Gg_\Lambda \to A$ by $a(g) = b(\mu_g) - b(\nu_g)$. Then $a$ is continuous because it
is locally constant. For $g \in \Gg_\Lambda^{(0)}$ we have $\mu_g = \nu_g$, so $a(g) = 0$. Hence $a
\in \Cgpd1(\Gg_\Lambda, A)$.

Fix $(g,h)\in \Gg^{(2)}_\Lambda$. With notation as in
Lemma~\ref{lem:groupoid-2-cocycle}(\ref{it:alphas}), equation~\eqref{eq:groupoid-cocycle-formula}
gives
\begin{align*}
\sigma_c(g, h) &= \big(c(\mu_{g}, \alpha) - c(\nu_{g}, \alpha)\big) +
\big(c(\mu_{h}, \beta) - c(\nu_{h}, \beta)\big) -
\big(c(\mu_{gh}, \gamma) - c(\nu_{gh}, \gamma)\big)\\
&=  (b(\mu_{g}) -  b(\mu_{g}\alpha) + b(\alpha)) -  (b(\nu_{g}) -  b(\nu_{g}\alpha) + b(\alpha))  \\
&\qquad{}+ \big(b(\mu_{h}) -  b(\mu_{h}\beta) + b(\beta)\big) -  \big(b(\nu_{h}) -  b(\nu_{h}\beta) + b(\beta)\big) \\
&\qquad{} - \big(b(\mu_{gh}) -  b(\mu_{gh}\gamma) + b(\gamma)\big) +
 \big(b(\nu_{gh}) -  b(\nu_{gh}\gamma) + b(\gamma)\big).
\intertext{Since $\mu_{g}\alpha= \mu_{gh}\gamma$, $\nu_{h}\beta = \nu_{gh}\gamma$ and
    $\nu_{g}\alpha= \mu_{h}\beta$, we obtain}
\sigma_c(g,h) &= b(\mu_{g}) -  b(\nu_{g})  + b(\mu_{h}) -  b(\nu_{h})  - b(\mu_{gh}) +  b(\nu_{gh}) \\
&= a(g) + a(h) - a(gh) = \dgpd1(a)(g, h).
\end{align*}
Hence, $\sigma_c = \dgpd1(a)$ is a coboundary.
Since the map $c \mapsto \sigma_c$ is a homomorphism
(see formula ~\eqref{eq:groupoid-cocycle-formula}) which maps coboundaries to coboundaries,
the map $[c] \mapsto [\sigma_c]$ is a well-defined homomorphism
$\Hcat2(\Lambda,A) \to \Hgpd2(\Gg_\Lambda, A)$.

It remains to verify that $[\sigma_c]$ does not depend on the choice of $\Pp$. Fix subsets $\Pp$ and $\Qq$ of $\Lambda \mathbin{{_s*_s}} \Lambda$ yielding partitions of
$\Gg_\Lambda$. For $(\mu,\nu), (\sigma,\tau) \in \Lambda \mathbin{{_s*_s}} \Lambda$, if $d(\mu) -
d(\nu) \not= d(\sigma) - d(\tau)$, then $Z(\mu,\nu) \cap Z(\sigma,\tau) = \emptyset$. Otherwise,
setting
\[
(\mu,\nu) \wedge (\sigma,\tau) := \{(\mu\alpha,\nu\alpha) : \mu\alpha \in \MCE(\mu,\sigma)\text{ and } \nu\alpha \in \MCE(\nu,\tau)\},
\]
we have
\begin{equation}\label{eq:wedge implements intersection}
    Z(\mu,\nu) \cap Z(\sigma,\tau) =
        \bigsqcup_{(\eta,\zeta) \in (\mu,\nu) \wedge (\sigma,\tau)} Z(\eta,\zeta).
\end{equation}
Let $\Pp \vee \Qq := \bigcup_{(\mu, \nu) \in \Pp} \bigcup_{(\sigma,\tau) \in \Qq} (\mu,\nu) \wedge
(\sigma, \tau)$. Then $\{Z(\eta,\zeta) : (\eta,\zeta) \in \Pp \vee \Qq\}$ is a common refinement of
$\{Z(\mu,\nu) : (\mu,\nu) \in \Pp\}$ and $\{Z(\sigma,\tau) : (\sigma,\tau) \in \Qq\}$ such that if
$g \in \Gg_\Lambda$ satisfies $g \in Z(\mu,\nu)$ for $(\mu, \nu) \in \Pp$ and $g \in Z(\eta,\zeta)$
for $(\eta, \zeta) \in (\Pp \vee \Qq)$, then $\eta = \mu\lambda$ and $\zeta = \nu\lambda$ for some
$\lambda$, and similarly for $\Qq$. So by replacing $\Qq$ with $\Pp \vee \Qq$ we may assume that
$\{Z(\eta,\zeta) : (\eta,\zeta) \in \Qq\}$ is a refinement of $\{Z(\mu,\nu) : (\mu,\nu) \in \Pp\}$
and that for each element $(\eta,\zeta)$ of $\Qq$, there is a unique element $(\mu,\nu) \in \Pp$
and a unique $\lambda \in \Lambda$ such that $\eta = \mu\lambda$ and $\zeta = \nu\lambda$.

For $g \in \Gg_\Lambda$, let $(\mu_g, \nu_g) \in \Pp$ and $(\eta_g, \zeta_g) \in \Qq$ be the unique
elements such that $g \in Z(\eta_g, \zeta_g) \subseteq Z(\mu_g, \nu_g)$ and let $\lambda_g$ be the
unique path such that $(\eta_g, \zeta_g) = (\mu_g\lambda_g, \nu_g\lambda_g)$.

Fix $(g, h) \in \Gg_\Lambda^{(2)}$. By Lemma~\ref{lem:groupoid-2-cocycle}(\ref{it:alphas}), we may
fix $\alpha', \beta', \gamma'$ and $y$ satisfying
\[
\begin{split}
    g = (\eta_g\alpha' y, d(\eta_g ) &{}- d(\zeta_g ) ,\zeta_g\alpha' y),\quad
    h = (\eta_h\beta' y, d(\eta_h) - d (\zeta_h ) , \zeta_h\beta' y),\quad\text{and} \\
    &gh = (\eta_{gh}\gamma' y, d(\eta_{gh}) - d(\zeta_{gh}), \zeta_{gh}\gamma' y).
\end{split}
\]
The triple $\alpha = \lambda_g\alpha'$, $\beta = \lambda_h\beta'$, $\gamma = \lambda_{gh}\gamma'$
then satisfies~\eqref{eq:alphbetgam}. So
\begin{align*}
    g &\in Z(\eta_g\alpha', \zeta_g\alpha') = Z(\mu_g\alpha, \nu_g\alpha)\\
    h &\in Z(\eta_h\beta', \zeta_h\beta') = Z(\mu_h\beta, \nu_h\beta),\text{ and}\\
    gh &\in Z(\eta_{gh}\gamma', \zeta_{gh}\gamma') =  Z(\mu_{gh}\gamma, \nu_{gh}\gamma).
\end{align*}

Fix $c \in \Zcat2(\Lambda, A)$. Let $\sigma^\Pp_c$ be the groupoid $2$-cocycle obtained 
from
Lemma~\ref{lem:groupoid-2-cocycle} applied to $c$ and $\Pp$ and let $\sigma^\Qq_c$ be the
groupoid $2$-cocycle obtained in the same way from $c$ and $\Qq$. By
Lemma~\ref{lem:groupoid-2-cocycle}(\ref{it:G-cocycle formula}), we have
\begin{align*}
\sigma^\Pp_c(g, h)
    &=   \big(c(\mu_g,\alpha) - c(\nu_g,\alpha)\big)
                     + \big(c(\mu_h,\beta) - c(\nu_h,\beta)\big)
                     - \big(c(\mu_{gh},\gamma) - c(\nu_{gh},\gamma)\big),\\
    &=   \big(c(\mu_g,\lambda_g\alpha') - c(\nu_g,\lambda_g\alpha')\big)
                     + \big(c(\mu_h,\lambda_h\beta') - c(\nu_h,\lambda_h\beta')\big) \\
                     & \hskip15em{} - \big(c(\mu_{gh},\lambda_{gh}\gamma') - c(\nu_{gh},\lambda_{gh}\gamma')\big),
                     \text{ \;and}\\
\sigma^\Qq_c(g, h)
    &=  \big(c(\eta_g,\alpha') - c(\zeta_g,\alpha')\big)
                   + \big(c(\eta_h,\beta') - c(\zeta_h,\beta')\big)
                   - \big(c(\eta_{gh}, \gamma') - c(\zeta_{gh},\gamma')\big) \\
    &=  \big(c(\mu_g\lambda_g,\alpha') - c(\nu_g\lambda_g,\alpha')\big)
                   + \big(c(\mu_h\lambda_h,\beta') - c(\nu_h\lambda_h,\beta')\big) \\
                   & \hskip15em{} - \big(c(\mu_{gh}\lambda_{gh}, \gamma') - c(\nu_{gh}\lambda_{gh},\gamma')\big).
\end{align*}
Define $b : \Gg_\Lambda \to A$ by $b(g) = c(\mu_g, \lambda_g) - c(\nu_g, \lambda_g)$. Then $b$ is
continuous because it is locally constant. If $g \in \Gg_\Lambda^{(0)}$, then $\mu_g = \nu_g$, and
hence $b(g) = 0$. So $b \in \Cgpd1(\Gg_\Lambda, A)$. The categorical $2$-cocycle identity for $c$
implies that
\begin{align*}
\big(c(\mu_g,\lambda_g\alpha') &{}- c(\nu_g,\lambda_g\alpha')\big)
    - \big(c(\mu_g\lambda_g,\alpha') - c(\nu_g\lambda_g,\alpha')\big) \\
    &= \big(c(\mu_g, \lambda_g) - c(\lambda_g, \alpha')\big) - \big(c(\nu_g, \lambda_g) - c(\lambda_g, \alpha')\big)
    = b(g).
\end{align*}
This and a symmetric calculation yield
\[
\sigma^\Pp_c(g, h) - \sigma^\Qq_c(g, h)
    = b(g) + b(h) - b(gh) = (\dgpd1b)(g,h).
\]
Hence $\sigma_c$ and $\tilde\sigma_c$ are cohomologous.
\end{proof}

We now show that there always exists a set $\Pp$ producing a partition of $\Gg_\Lambda$ as
hypothesised in the preceding results.

\begin{lem}\label{lem:twist-section}
Let $\Lambda$ be a row-finite $k$-graph with no sources.  Then there exists a subset $\Pp
\subset \Lambda \mathbin{{_s*_s}} \Lambda$ such that $(\lambda, s(\lambda)) \in \Pp$ for all
$\lambda \in \Lambda$ and $\{Z(\mu,\nu) : (\mu,\nu) \in \Pp\}$ is a partition of $\Gg_\Lambda$.
\end{lem}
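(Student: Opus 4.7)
The plan is to split $\Gg_\Lambda$ into the ``positive'' part covered by the mandatory pairs $(\lambda, s(\lambda))$ and its complement, and then to partition the complement by a standard inductive refinement.

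First I take $\Pp_0 := \{(\lambda, s(\lambda)) : \lambda \in \Lambda\}$ and verify that the cylinders $Z(\lambda, s(\lambda))$ are pairwise disjoint: an equality $(\lambda_1 x_1, d(\lambda_1), x_1) = (\lambda_2 x_2, d(\lambda_2), x_2)$ forces $x_1 = x_2$ and $\lambda_1 x_1 = \lambda_2 x_1$, so the factorisation property gives $\lambda_1 = \lambda_2$. Their union $\Gg_+ := \bigsqcup_{\lambda \in \Lambda} Z(\lambda, s(\lambda))$ coincides with $\{(x, n, y) \in \Gg_\Lambda : n \in \NN^k \text{ and } y = \sigma^n x\}$. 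It therefore suffices to produce $\Pp' \subseteq \Lambda \mathbin{{_s*_s}} \Lambda$ such that each $Z(\mu, \nu)$ with $(\mu, \nu) \in \Pp'$ is contained in $W := \Gg_\Lambda \setminus \Gg_+$ and $\{Z(\mu, \nu) : (\mu, \nu) \in \Pp'\}$ partitions $W$; then $\Pp := \Pp_0 \cup \Pp'$ will satisfy both required conditions.

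Next I show $W$ is open. For $g = (x, n, y) \in W$ with $n \notin \NN^k$, the clopen set $\tilde d^{-1}(n)$ is a neighbourhood of $g$ disjoint from $\Gg_+$. For $n \in \NN^k$, we have $y \ne \sigma^n x$, and one can choose $p \in \NN^k$ large enough that $y(p) = x(n + p)$ and $y(0, p) \ne x(n, n+p)$; then $g$ lies in the basic neighbourhood $Z(x(0, n+p), y(0, p))$, and any element $(x(0, n+p) z, n, y(0, p) z)$ of this cylinder fails to lie in $\Gg_+$, since $y(0, p) z = \sigma^n(x(0, n+p) z)$ would force equality on the length-$p$ initial segments.

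Consequently $\Bb := \{Z(\mu, \nu) : (\mu, \nu) \in \Lambda \mathbin{{_s*_s}} \Lambda,\, Z(\mu, \nu) \subseteq W\}$ is a countable basis of compact open sets for $W$. I enumerate $\Bb$ as $Z(\mu_1, \nu_1), Z(\mu_2, \nu_2), \ldots$ and build $\Pp'$ inductively, starting from $\Pp'_0 := \emptyset$. At stage $i + 1$, only the finitely many pairs $(\mu', \nu') \in \Pp'_i$ that meet $Z(\mu_{i+1}, \nu_{i+1})$ are relevant, and for each such pair \eqref{eq:wedge implements intersection} expresses $Z(\mu', \nu') \cap Z(\mu_{i+1}, \nu_{i+1})$ as a finite disjoint union of sub-cylinders $Z(\mu_{i+1}\alpha, \nu_{i+1}\alpha)$. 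Row-finiteness then lets me write the complement $V_{i+1} := Z(\mu_{i+1}, \nu_{i+1}) \setminus \bigcup_{(\mu', \nu') \in \Pp'_i} Z(\mu', \nu')$ as a finite disjoint union $\bigsqcup_j Z(\mu_{i+1} \beta_j, \nu_{i+1} \beta_j)$ of cylinders indexed by paths $\beta_j$ of a common sufficiently large degree, and I let $\Pp'_{i+1}$ be $\Pp'_i$ together with the corresponding pairs. Setting $\Pp' := \bigcup_i \Pp'_i$, every $Z(\mu_i, \nu_i)$ is covered by $\bigcup_{(\mu, \nu) \in \Pp'_i} Z(\mu, \nu)$, so $\bigcup \Pp'$ covers $\bigcup \Bb = W$, and pairwise disjointness is preserved throughout the construction. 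The hard part will be this inductive refinement step, and specifically verifying that $V_{i+1}$ really does decompose as a finite disjoint union of basic cylinders; this rests squarely on \eqref{eq:wedge implements intersection} together with row-finiteness of $\Lambda$.
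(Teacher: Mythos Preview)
Your proposal is correct and follows essentially the same approach as the paper: verify that the cylinders $Z(\lambda,s(\lambda))$ are pairwise disjoint, show that their complement $W$ is open (the paper asserts $\Gg_+$ is clopen via ``a sequential argument'', while you give an explicit basic-neighbourhood argument), cover $W$ by countably many basic cylinders, and then refine inductively to a disjoint family using that intersections and set-differences of basic cylinders decompose as finite disjoint unions of basic cylinders (the paper records this explicitly as~\eqref{eq:setminus}). The only cosmetic difference is that the paper phrases the refinement step as ``express $Z(\mu_1,\nu_1)\setminus Z(\mu_2,\nu_2)$ as a finite disjoint union'' and iterates, whereas you subdivide $Z(\mu_{i+1},\nu_{i+1})$ at a common large degree; both rest on row-finiteness and~\eqref{eq:wedge implements intersection} in exactly the way you indicate.
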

\begin{proof}
First observe that $Z(\lambda, s(\lambda)) \cap Z(\mu,\nu)$ is nonempty if and only if $\mu =
\lambda\nu$, in which case, $Z(\mu,\nu) \subset Z(\lambda, s(\lambda))$. Hence, basic open sets of
the form $Z(\lambda, s(\lambda))$ are pairwise disjoint.  A sequential argument shows that
\[
X: = \bigsqcup_{\lambda \in \Lambda} Z(\lambda, s(\lambda))
\]
is clopen in $\Gg_\Lambda$. So there is a countable collection $\Uu$ of basic open
sets of the form $Z(\mu,\nu)$ whose union is $\Gg_\Lambda \setminus X$.

It remains to show that there is a collection of pairwise disjoint basic open sets $\mathcal{V}$
such that $\bigcup \mathcal{V} = \bigcup \mathcal{U}$. We first show that, given two basic open
sets $Z(\mu_1, \nu_1)$, $Z(\mu_2, \nu_2)$, both $Z(\mu_1, \nu_1) \cap Z(\mu_2, \nu_2)$ and
$Z(\mu_1, \nu_1) \setminus Z(\mu_2, \nu_2)$ may be expressed as finite disjoint unions of such
basic open sets. We may assume that $d(\mu_1)-d(\nu_1) = d(\mu_2)-d(\nu_2)$, since $Z(\mu_1, \nu_1)
\cap Z(\mu_2, \nu_2)$ is empty otherwise. Recall from~\eqref{eq:wedge implements intersection} that
\[
Z(\mu_1, \nu_1) \cap Z(\mu_2, \nu_2) =
\bigsqcup_{(\eta,\zeta) \in (\mu_1, \nu_1) \wedge (\mu_2,\nu_2)} Z(\eta, \zeta).
\]
Since $Z(\mu_1, \nu_1) = \bigsqcup_{d(\mu\alpha) = d(\mu_1) \vee d(\nu_1)} Z(\mu_1\alpha,
\nu_1\alpha)$, it follows that
\begin{equation}
\begin{split}
Z(\mu_1, \nu_1) \setminus Z(\mu_2, \nu_2) =
\bigsqcup\big\{Z(\mu_1\alpha, \nu_1\alpha) :{} &d(\mu_1\alpha) = d(\mu_1) \vee d(\nu_1),\\
                                           & (\mu_1\alpha,\nu_1\alpha) \not\in (\mu_1,\nu_1)\wedge(\mu_2,\nu_2)\big\}.
\end{split}\label{eq:setminus}
\end{equation}
Now by the standard inclusion-exclusion decomposition, $Z(\mu_1, \nu_1) \cup Z(\mu_2, \nu_2)$ is
also a finite disjoint union of basic open sets. The collection $\mathcal{U} = \{ U_1, U_2, \dots
\}$ may now be replaced by a pairwise disjoint collection $\mathcal{V}$ recursively. Set $\Vv_1 =
\{U_1\}$. Now suppose that $\Vv_i$ is a collection of mutually disjoint basic open sets such that
$\bigcup^i_{j=1} U_j = \bigsqcup \Vv_i$. Use~\eqref{eq:setminus} to write $U_{i+1} \setminus
\bigcup \Vv_i = \bigsqcup \Ww_{i+1}$ where $\Ww_{i+1}$ is a finite collection of mutually disjoint
sets of the form $Z(\eta,\zeta)$, and let $\Vv_{i+1} := \Vv_i \cup \Ww_{i+1}$. Then
$\bigcup^{i+1}_{j=1} U_j = \bigsqcup \Vv_{i+1}$. By induction we obtain the desired family $\Vv$ of
mutually disjoint basic open sets such that $\bigsqcup \Vv = \bigcup \Uu$.
\end{proof}

We can now prove that every twisted $k$-graph algebra admits a homomorphism into a twisted
$C^*$-algebra, in the sense of Renault (see \cite{Renault1980}), of the path  groupoid of the
corresponding $k$-graph. It follows that all of the generators of $C^*(\Lambda, c)$ are nonzero. We
will use our gauge-invariant uniqueness theorem in the next section to see that this homomorphism
is an isomorphism.

Recall from \cite{Renault1980} that involution and convolution in
$C_c(\Gg_\Lambda, \sigma_c)$ of $C^*(\Gg_\Lambda, \sigma_c)$ are given by
\[
\big(f * g\big)(\gamma) = \sum_{\alpha\beta = \gamma} \sigma_c(\alpha,\beta) f(\alpha)g(\beta)
    \quad\text{ and }\quad
f^*(\gamma) = \overline{\sigma_c(\gamma^{-1}, \gamma)} \overline{f(\gamma^{-1})}.
\]

\begin{thm}\label{thm: C*(Lambda,c) cong C*(G,omega)}
Let $\Lambda$ be a row-finite $k$-graph with no sources. Let $\Pp$ be a subset of
$\Lambda \mathbin{{_s*_s}} \Lambda$ containing $\{(\lambda, s(\lambda)) : \lambda \in \Lambda\}$
such that $\{Z(\mu,\nu) : (\mu,\nu) \in \Pp\}$ is a partition of $\Gg_\Lambda$. Fix $c \in
\Zcat2(\Lambda,\TT)$, and let $\sigma_c \in \tilde{Z}^2(\Gg_\Lambda, \TT)$ be the cocycle
constructed from $\Pp$ as in Lemma~\ref{lem:groupoid-2-cocycle}. Then there is a surjective
homomorphism $\pi : C^*(\Lambda,c) \to C^*(\Gg_\Lambda, \sigma_c)$ such that $\pi(s_\lambda) =
1_{Z(\lambda, s(\lambda))}$ for all $\lambda \in \Lambda$. Moreover, for each $(\mu,\nu) \in
\Lambda \mathbin{{_s*_s}} \Lambda$, there is a finite subset $F \subseteq s(\mu)\Lambda$ such that
$Z(\mu,\nu) = \bigsqcup_{\tau \in F} Z(\mu\tau, \nu\tau)$ and a function $a : F \to \TT$ such that
$\pi(s_\mu s^*_\nu) = \sum_{\tau \in F} a_\tau 1_{Z(\mu\tau, \nu\tau)}$.
\end{thm}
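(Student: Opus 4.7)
The plan is to apply the universal property of $C^*(\Lambda, c)$ recalled in Notation~\ref{rmk:old twisted algebra}. I set $t_\lambda := 1_{Z(\lambda, s(\lambda))} \in C_c(\Gg_\Lambda, \sigma_c) \subseteq C^*(\Gg_\Lambda, \sigma_c)$ for each $\lambda \in \Lambda$, and verify that $\{t_\lambda : \lambda \in \Lambda\}$ is a Cuntz-Krieger $(\Lambda,c)$-family. Relation~(CK1) is immediate because the $Z(v)$ are pairwise disjoint compact open bisections contained in $\Gg_\Lambda^{(0)}$, and relation~(CK4) follows from the well-known partition $Z(v) = \bigsqcup_{\lambda \in v\Lambda^n} Z(\lambda)$ combined with a direct computation that $t_\lambda * t_\lambda^* = 1_{Z(\lambda)}$, using that (by Lemma~\ref{lem:groupoid-2-cocycle}) $\sigma_c$ is trivial on any composable pair one of whose entries is a unit.

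The heart of the argument is relation~(CK2), which amounts to the identity
\[
t_\mu * t_\nu \;=\; c(\mu,\nu)\, 1_{Z(\mu\nu,s(\nu))} \;=\; c(\mu,\nu)\, t_{\mu\nu}.
\]
Expanding the convolution, the only composable pairs $(\alpha, \beta)$ with $\alpha \in Z(\mu,s(\mu))$ and $\beta \in Z(\nu,s(\nu))$ have the form $\alpha = (\mu\nu y, d(\mu), \nu y)$ and $\beta = (\nu y, d(\nu), y)$ with $r(y) = s(\nu)$, giving $\alpha\beta \in Z(\mu\nu, s(\nu))$. Because $\Pp$ contains $(\mu, s(\mu))$, $(\nu, s(\nu))$ and $(\mu\nu, s(\nu))$, these are precisely the $\Pp$-designated pairs for $\alpha$, $\beta$ and $\alpha\beta$. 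I then take $\alpha' = \nu$ and $\beta' = \gamma' = s(\nu)$ in Lemma~\ref{lem:groupoid-2-cocycle}(\ref{it:alphas}); substituting into~\eqref{eq:groupoid-cocycle-formula} and using that $c$ is normalised (so $c(\lambda, s(\lambda))$, $c(r(\lambda), \lambda)$, and $c(v,v)$ vanish for all $\lambda$ and vertices $v$), every term except $c(\mu,\nu)$ collapses and one reads off $\sigma_c(\alpha, \beta) = c(\mu, \nu)$. Relation~(CK3) is handled similarly: for $\delta \in Z(s(\lambda))$ the unique composable pair $(\alpha, \beta)$ contributing to $(t_\lambda^* * t_\lambda)(\delta)$ has $\alpha\beta$ a unit, and the convolution coefficient together with the involution coefficient $\overline{\sigma_c(\beta,\alpha)}$ cancel via normalisation to give the value $1$.

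With (CK1)--(CK4) in hand, the universal property of $C^*(\Lambda, c)$ produces the homomorphism $\pi$. For the ``moreover'' assertion, I compute $\pi(s_\mu s_\nu^*) = t_\mu * t_\nu^*$ directly; this is supported on the compact open bisection $Z(\mu, \nu)$. Since $\{Z(\sigma, \tau) : (\sigma, \tau) \in \Pp\}$ partitions $\Gg_\Lambda$ and $Z(\mu, \nu)$ is compact, there is a finite subset $F \subseteq s(\mu)\Lambda$ with $Z(\mu, \nu) = \bigsqcup_{\tau \in F} Z(\mu\tau, \nu\tau)$ and each $(\mu\tau, \nu\tau) \in \Pp$; on each piece, the combined convolution-involution coefficients yield a constant element $a_\tau \in \TT$, giving the claimed formula. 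Surjectivity of $\pi$ then follows because $\lsp\{1_{Z(\sigma,\tau)} : (\sigma, \tau) \in \Pp\}$ is a dense $*$-subalgebra of $C^*(\Gg_\Lambda, \sigma_c)$ and each generator $1_{Z(\sigma,\tau)}$ lies in the image of $\pi$ up to a scalar in $\TT$.

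The principal obstacle is the computation underlying~(CK2): the cocycle formula~\eqref{eq:groupoid-cocycle-formula} is combinatorially intricate, but the hypothesis $(\lambda, s(\lambda)) \in \Pp$ is engineered precisely so that the canonical choices $\alpha' = \nu$, $\beta' = \gamma' = s(\nu)$ are admissible and make normalisation of $c$ kill every term except $c(\mu,\nu)$. The remaining work is bookkeeping: tracking the $\Pp$-assignment of each groupoid element that arises, and checking that convolutions of the $t_\lambda$ and their adjoints stay supported in bisections of the form $Z(\sigma, \tau)$ throughout.
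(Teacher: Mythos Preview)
Your verification of (CK1)--(CK4) matches the paper's argument, including the key choice $\alpha' = \nu$, $\beta' = \gamma' = s(\nu)$ in the (CK2) computation.

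There is a genuine gap in the ``moreover'' part. You assert that because $\{Z(\sigma,\tau) : (\sigma,\tau) \in \Pp\}$ partitions $\Gg_\Lambda$ and $Z(\mu,\nu)$ is compact, one obtains a finite $F$ with $Z(\mu,\nu) = \bigsqcup_{\tau \in F} Z(\mu\tau,\nu\tau)$ and each $(\mu\tau,\nu\tau) \in \Pp$. This does not follow: a $\Pp$-cell meeting $Z(\mu,\nu)$ need not be contained in it, nor need it have the form $Z(\mu\tau,\nu\tau)$. Nor is it clear why the convolution--involution coefficient should be constant on $\Pp$-cells: in the expression for $t_\mu * t_\nu^*$ at $g = (\mu x, d(\mu)-d(\nu), \nu x)$, the factor $(x, -d(\nu), \nu x)$ ranges over a set with no evident relation to $\Pp$. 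The paper bypasses all of this: it writes $\pi(s_\mu s_\nu^*)(g)$ explicitly as a product of two values of $\sigma_c$ and then invokes only the fact that $\sigma_c$ is locally constant and $\TT$-valued. Since the source map is a homeomorphism $Z(\mu,\nu) \to Z(s(\mu))$ onto a compact totally disconnected set, local constancy yields a finite clopen partition of $Z(s(\mu))$ into cylinders $Z(\tau)$, which pull back to the required $Z(\mu\tau,\nu\tau)$. The set $F$ has nothing to do with $\Pp$ in general.

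Your surjectivity argument inherits this problem and adds another: even granting $\pi(s_\mu s_\nu^*) = \sum_{\tau \in F} a_\tau 1_{Z(\mu\tau,\nu\tau)}$, you have not explained how a single $1_{Z(\mu\tau,\nu\tau)}$ (or $1_{Z(\mu,\nu)}$) is recovered from this sum. The paper does this explicitly by right-multiplying by $\sum_{\tau \in F} \overline{a}_\tau\, \pi(s_{\nu\tau} s_{\nu\tau}^*) = \sum_{\tau \in F} \overline{a}_\tau\, 1_{Z(\nu\tau)}$, using that $1_{Z(\mu\tau,\nu\tau)} * 1_{Z(\nu\tau')} = \delta_{\tau,\tau'} 1_{Z(\mu\tau,\nu\tau)}$, to exhibit $1_{Z(\mu,\nu)}$ itself in the image of $\pi$.
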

\begin{proof}
By the universal property of $C^*(\Lambda, c)$, to prove the first statement it suffices to show
that $t : \lambda \mapsto 1_{Z(\lambda, s(\lambda))}$ is a Cuntz-Krieger $(\Lambda,c)$-family in
$C^*(\Gg_\Lambda , \sigma_c)$. Calculations like those of \cite[Lemma~4.3]{KPRR1997} verify (CK1),
(CK3)~and~(CK4). It remains only to verify that $t_\mu t_\nu = c(\mu,\nu) t_{\mu\nu}$ whenever
$r(\nu) = s(\mu)$. Fix $h \in \Gg_\Lambda$. We have
\begin{equation}\label{eq:gpd product}
(t_\mu t_\nu)(h)
    = 1_{Z(\mu,s(\mu))} * 1_{Z(\nu, s(\nu))}(h)
    = \sum_{g \in \Gg_\Lambda} \sigma_c(g,g^{-1}h) 1_{Z(\mu,s(\mu))}(g) 1_{Z(\nu,s(\nu))}(g^{-1}h).
\end{equation}
For fixed $g$, putting $h' = g^{-1}h$, the product $1_{Z(\mu,s(\mu))}(g) 1_{Z(\nu,s(\nu))}(h')$ is
equal to $1$ if $h' = (\nu z, d(\nu), z)$ and $g = (\mu\nu z, d(\mu), \nu z)$ for some $z \in
\Lambda^\infty$, and to $0$ otherwise. So: there is at most one nonzero term in the sum on the
right-hand side of~\eqref{eq:gpd product}; there is such a term precisely when $h = (\mu\nu z,
d(\mu\nu), z)$; and then the nonzero term occurs with $g = (\mu\nu z, d(\mu), \nu z)$ and $h' =
(\nu z, d(\nu), z)$. Setting $z := s(h)$, $g := (\mu\nu z, d(\mu), \nu z)$ and $h' := (\nu z,
d(\nu), z)$, we have
\[
(t_\mu t_\nu)(h) = \sigma_c(g, h') 1_{Z(\mu\nu, s(\nu))}(h).
\]
We have $g \in Z(\mu, s(\mu))$, $h \in Z(\nu, s(\nu))$, and $gh' \in Z(\mu\nu, s(\mu\nu))$. Since
$(  \mu , s ( \mu ) )$, $( \nu , s ( \nu ) )$, $( \mu \nu , s ( \mu \nu ) )$ belong to $\Pp$, we have $\mu_g = \mu$, $\nu_g = s(\mu)$, $\mu_{h'} =
\nu$, $\nu_{h'} = s(\nu)$, $\mu_{gh'} = \mu\nu$, and $\nu_{gh'} = s(\mu\nu)$. Furthermore $\alpha
:= \nu$ and $\beta = \gamma := s(\nu)$ satisfy~\eqref{eq:alphbetgam}, and
hence~\eqref{eq:groupoid-cocycle-formula} yields
\[
\sigma_c(g,h') = c(\mu,\nu) - c(\nu, s(\nu))
                    + c(\nu, s(\nu)) - c(s(\nu), s(\nu))
                    - \big(c(\mu\nu, s(\nu)) - c(s(\nu), s(\nu)\big),
\]
which, since $c$ is normalised, collapses to $c(\mu,\nu)$. Hence
\[
(t_\mu t_\nu)(x,m,y) = c(\mu, \nu) 1_{Z(\mu\nu, s(\nu))}(x, m, y) = c(\mu,\nu)t_{\mu\nu}(x,m,y),
\]
establishing~(CK2). Hence there is a homomorphism $\pi : C^* ( \Lambda , c ) \to C^* ( \Gg_\Lambda
, \sigma_c )$ such that $\pi ( s_\lambda ) = 1_{Z ( \lambda , s ( \lambda ) )}$. We postpone the
proof that this map is surjective until we have established the final statement.

Fix $\mu,\nu$ with $s(\mu) = s(\nu)$. For $g \in \Gg_\Lambda$, we have
\begin{align*}
\pi(s_\mu s^*_\nu)(g)
    = \pi(s_\mu) \pi(s_\nu)^* (g)
    &= 1_{\mu, s(\mu)} * 1_{\nu, s(\nu)}^*(g) \\
    &= \sum_{hk = g} \sigma_c(h,k) \overline{\sigma_c(k^{-1}, k)} 1_{Z(\mu, s(\mu))}(h)1_{Z(\nu, s(\nu))}(k^{-1}).
\end{align*}
There is at most one nonzero term in the sum, and this occurs when $g = (\mu x, d(\mu) - d(\nu),
\nu x)$, in which case $h = (\mu x, d(\mu), x)$ and $k = (x, -d(\nu), \nu x)$ for some $x \in
\Lambda^\infty$. So $\pi(s_\mu s^*_\nu)$ is supported on $Z(\mu,\nu)$, and for $g = (\mu x, d(\mu)
- d(\nu), \nu x)$ we have
\[
\pi(s_\mu s^*_\nu)(g) =
        \sigma_c\big((\mu x, d(\mu),x), (x, -d(\nu), \nu x)\big)
            \sigma_c\big((\nu x, d(\nu), x), (x, -d(\nu), \nu x)\big).
\]
Since $\sigma_c$ is locally constant and $\TT$-valued by construction, it follows that $\pi (
s_{\mu} s_{\nu}^* )$ can be written as a linear combination of the desired form.

For surjectivity, observe as in the proof of \cite[Corollary~3.5(i)]{KP2000} that $C^*(\Gg_\Lambda, \sigma_c) = \clsp\{1_{Z(\mu,\nu)} : s(\mu) = s(\nu)\}$, so it suffices to show that each $1_{Z (\mu , \nu )}$ is in the image of $\pi$. Fix $(\mu, \nu) \in \Lambda \mathbin{{_s*_s}} \Lambda$ and express $\pi(s_{\mu}s_{\nu}^*) = \sum_{\tau \in F} a_\tau 1_{Z(\mu\tau, \nu\tau)}$ as above.

A routine calculation using the definitions of convolution and involution in $C_c(\Gg_\Lambda,
\sigma_c)$ shows that
\[
\pi(s_{\nu\tau}s^*_{\nu\tau})
    = 1_{Z(\nu\tau, s(\tau))} * 1^*_{Z(\nu\tau, s(\tau))}
    = 1_{Z(\nu\tau)}\quad\text{ for each $\tau \in F$.}
\]
That each $1_{Z(\nu\tau)} \in C_0(\Gg_\Lambda^{(0)})$, and that $\sigma_c$ is a normalised cocycle
imply that $f * 1_{Z(\nu\tau)}(g) = f(g) 1_{Z(\nu\tau)}(s(g))$ for all $f \in C_c(\Gg_\Lambda,
\sigma_c)$. Since the range map on $\Gg_\Lambda$ is bijective on $Z_{(\mu,\nu)}$ and the
$Z_{(\mu\tau, \nu\tau)}$ are mutually disjoint, we obtain
\[
1_{Z(\mu\tau, \nu\tau)} 1_{Z(\nu\tau')} = \delta_{\tau, \tau'} 1_{Z(\mu\tau, \nu\tau)}
    \quad\text{ for all $\tau,\tau' \in F$.}
\]
Thus
\[
 1_{Z ( \mu , \nu )} = \Big( \sum_{\tau \in F} a_\tau 1_{Z(\mu\tau, \nu\tau)}  \Big)
    \Big(\sum_{\tau \in F} \overline{a}_\tau 1_{Z ( \nu \tau)} \Big)
    = \pi ( s_\mu s_\nu^* ) \sum_{\tau \in F} \overline{a}_\tau \pi ( s_{\nu \tau} s_{\nu \tau }^* ) . \qedhere
\]
\end{proof}

\begin{cor}\label{cor:gens nonzero}
Let $\Lambda$ be a row-finite $k$-graph with no sources, and fix $c \in \Zcat{2}(\Lambda, \TT)$.
Let $\{s_\lambda : \lambda \in \Lambda\}$ be the universal generating Cuntz-Krieger $(\Lambda,
c)$-family in $C^*(\Lambda, c)$. Then each $s_\lambda \not= 0$.
\end{cor}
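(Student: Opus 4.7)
The plan is to invoke Theorem~\ref{thm: C*(Lambda,c) cong C*(G,omega)} and pass the question over to the twisted groupoid algebra. First, Lemma~\ref{lem:twist-section} supplies a subset $\Pp \subset \Lambda \mathbin{{_s*_s}} \Lambda$ satisfying the hypotheses of the theorem, which then produces a homomorphism $\pi : C^*(\Lambda, c) \to C^*(\Gg_\Lambda, \sigma_c)$ with $\pi(s_\lambda) = 1_{Z(\lambda, s(\lambda))}$. Since $\pi$ is a $*$-homomorphism, it suffices to prove that each $1_{Z(\lambda, s(\lambda))}$ is nonzero in $C^*(\Gg_\Lambda, \sigma_c)$.

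Next I would observe that $Z(\lambda, s(\lambda))$ is a nonempty open bisection of $\Gg_\Lambda$: since $\Lambda$ has no sources, $s(\lambda)\Lambda^\infty$ is nonempty, and any $x$ in it produces a point $(\lambda x, d(\lambda), x) \in Z(\lambda, s(\lambda))$. A direct computation with the convolution and involution in $C_c(\Gg_\Lambda, \sigma_c)$, using that $\sigma_c$ vanishes whenever one of its arguments is a unit (noted in the proof of Lemma~\ref{lem:groupoid-2-cocycle}), yields
\[
1_{Z(\lambda, s(\lambda))}^* * 1_{Z(\lambda, s(\lambda))} = 1_{Z(s(\lambda))}.
\]
Hence it is enough to show that $1_{Z(s(\lambda))}$ is nonzero in $C^*(\Gg_\Lambda, \sigma_c)$.

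For this last step, I would use the left regular representation $\lambda_x$ of $C^*(\Gg_\Lambda, \sigma_c)$ on $\ell^2(s^{-1}(x))$ for some fixed $x \in Z(s(\lambda)) \subseteq \Gg_\Lambda^{(0)}$ (which exists as already noted). This representation is bounded with $\|\lambda_x(f)\| \leq \|f\|_{C^*(\Gg_\Lambda, \sigma_c)}$ for all $f \in C_c(\Gg_\Lambda, \sigma_c)$, and a direct calculation gives $\lambda_x(1_{Z(s(\lambda))}) \delta_x = \delta_x$; therefore $\|1_{Z(s(\lambda))}\| \geq 1 > 0$ in $C^*(\Gg_\Lambda, \sigma_c)$, which finishes the argument. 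There is no real obstacle here: the corollary is essentially a bookkeeping consequence of Theorem~\ref{thm: C*(Lambda,c) cong C*(G,omega)} combined with the standard fact that $C_0(\Gg_\Lambda^{(0)})$ sits faithfully inside the twisted groupoid $C^*$-algebra.
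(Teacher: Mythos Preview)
Your proposal is correct and follows essentially the same approach as the paper: invoke Theorem~\ref{thm: C*(Lambda,c) cong C*(G,omega)} to map into the twisted groupoid $C^*$-algebra, then use that $\Lambda$ has no sources to see that each $Z(\lambda, s(\lambda))$ is nonempty. The paper's proof simply asserts that nonemptiness of $Z(\lambda, s(\lambda))$ implies $1_{Z(\lambda, s(\lambda))} \neq 0$ in $C^*(\Gg_\Lambda, \sigma_c)$, whereas you supply the extra justification (via the $C^*$-identity reduction to $1_{Z(s(\lambda))}$ and the regular representation) that a nonzero element of $C_c(\Gg_\Lambda, \sigma_c)$ survives in the completion; this is a standard fact the paper takes for granted, so your version is a slightly more explicit rendering of the same argument.
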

\begin{proof}
By Theorem~\ref{thm: C*(Lambda,c) cong C*(G,omega)}, there exist $\sigma \in
\tilde{Z}^2(\Gg_\Lambda, \TT)$ and a homomorphism $\pi : C^*(\Lambda, c) \to C^*(\Gg_\Lambda,
\sigma)$ such that $\pi(s_\lambda) = 1_{Z(\lambda, s(\lambda))}$ for all $\lambda \in \Lambda$.
Since $\Lambda$ has no sources, each $Z(\lambda, s(\lambda))$ is nonempty, and hence each
$1_{Z(\lambda, s(\lambda))}$ is nonzero. So each $s_\lambda$ is nonzero.
\end{proof}

We conjecture that for each $r \ge 0$, there is an injective homomorphism from $\Hcat{r}(\Lambda,
A)$ to $\Hgpd{r}(\Gg_\Lambda, A)$. For $r = 0$ it is clear that there is such a homomorphism
determined by the map from $\Ccat{0}(\Lambda, A)$ to $\tilde{C}^0(\Gg_\Lambda, A)$ which takes $f :
\Lambda^0 \to A$ to the function $x \mapsto f(r(x))$ from $\Gg_\Lambda^{(0)}$ to $A$. The following
remark indicates how to define such a homomorphism for $r = 1$. It is not clear to us how to
proceed for $r \ge 3$.

\begin{rmk}\label{rmk:1cocycle}
Let $\Lambda$ be a row-finite $k$-graph with no sources and let $A$ be an abelian group.  Let $c
\in \Zcat1(\Lambda, A)$. As observed in \cite[\S 5]{KP2000} the function $\tilde{c} : \Gg_\Lambda
\to A$ given by
\[
\tilde{c}(x, \ell - m, y) = c(x(0, \ell)) - c(y(0, m))
\]
defines an element of $\Zgpd1(\Gg_\Lambda, A)$. It is straightforward to check that $c \mapsto
\tilde{c}$ is a homomorphism of abelian groups and that it preserves coboundaries (indeed, if $c =
\dcat1b$, then $\tilde{c} =  \dgpd1\tilde{b}$ where $\tilde{b}(x) = b(x(0))$). Hence, the map $[c]
\mapsto [\tilde{c}]$ defines a homomorphism $\Hcat1(\Lambda, A) \to \Hgpd1(\Gg_\Lambda, A)$. We now
show by example that this map need not be surjective

Recall that $B_2$ is the path category, regarded as a $1$-graph, of the directed graph with a
single vertex $\star$ and two edges, $B_2^1 = \{ f_1, f_2 \}$.  We have $H_0(B_2) \cong \ZZ$, and
\cite[Examples~4.11(1)]{kps3} shows that $H_1(B_2) \cong \ZZ^2$.  Hence by the universal
coefficient theorem \cite[Theorem~7.3]{kps3} and Theorem~\ref{thm:cocycles} we have $\Hcat1(B_2,
\ZZ) \cong \ZZ^2$. The path space $B_2^\infty$ may be identified with the sequence space $X =
\{(x_i)^\infty_{i=1} : x_i \in \{f_1, f_2\}\text{ for all $i$}\}$.  By \cite[Theorem~2.2]{DKP01},
$\Zgpd1(\Gg_{B_2}, \ZZ) \cong H^0(X, \ZZ)$ (which may be identified with the group of continuous
maps $h : X \to \ZZ$) and
\[
\Hgpd1(\Gg_{B_2}, \ZZ) \cong \coker\big(1 - \sigma^* : H^0(X, \ZZ) \to H^0(X, \ZZ)\big)
\]
where $\sigma : X  \to X$ is the shift and $(\sigma^* h)(x) = h(\sigma x)$. Given a path $\lambda
\in B_2$, let $h_\lambda = 1_{Z(\lambda)}$ be the characteristic function of the cylinder set
$Z(\lambda)$. Then $H^0(X, \ZZ)$ is spanned by $\{h_\lambda : \lambda \in \Lambda\}$.  Observe that
$\sigma^* h_\lambda = h_{f_1\lambda} + h_{f_2\lambda}$.  It follows that the map $\varphi : H^0(X,
\ZZ) \to \ZZ[1/2]$ determined by $\varphi (h_\lambda) = 2^{-d(\lambda)}$ satisfies the condition
$\varphi(1 - \sigma^*) = 0$.  Since $\varphi$ is surjective and vanishes on coboundaries it
determines a surjective map $\tilde\varphi :  \Hgpd1(\Gg_\Lambda, A) \to \ZZ[1/2]$. Moreover, the
range of the map
\[
c \in \Zcat1(B_2, \ZZ)\mapsto
\tilde{c} \in \Zgpd1(\Gg_{B_2}, \ZZ) \cong H^0(X, \ZZ)
\]
is $\ZZ h_{f_1} + \ZZ h_{f_2}$. Hence, $\tilde\varphi([\tilde{c}]) \in \ZZ$ and the induced map $\Hcat1(\Lambda, A) \to \Hgpd1(\Gg_\Lambda, A)$ is not surjective.
\end{rmk}

\section{The gauge-invariant uniqueness theorem} \label{sec:giut}

As for untwisted $k$-graph $C^*$-algebras, each twisted $k$-graph $C^*$-algebra carries a gauge
action of the $k$-torus. We establish a gauge-invariant uniqueness theorem and deduce that the
homomorphism of Theorem~\ref{thm: C*(Lambda,c) cong C*(G,omega)} is an isomorphism. Where arguments
in this section closely parallel proofs of the corresponding results for untwisted $k$-graph
$C^*$-algebras, we have sometimes outlined proofs without giving full details.

\begin{rmk}\label{rmk:orth range proj}
Let $t$ be a Cuntz-Krieger $(\Lambda,c)$-family.
Since a sum of projections is itself a projection if and only
if the original projections are mutually orthogonal, relations
(CK1), (CK3)~and~(CK4) ensure that for fixed $n \in \NN^k$ and
distinct $\mu,\nu \in \Lambda^n$, we have $t_\mu t^*_\mu t_\nu
t^*_\nu = 0$.
\end{rmk}

\begin{lem}\label{lem:spanning}
Let $\Lambda$ be a row-finite $k$-graph with no sources, fix $c \in \Zcat2(\Lambda, \TT)$, and let
$t$ be a Cuntz-Krieger $(\Lambda,c)$-family. For $\mu,\nu, \eta,\zeta \in \Lambda$ and $n \ge
d(\nu) \vee d(\eta)$,
\begin{align}
t^*_\nu t_\eta
    &= \sum_{\nu\alpha = \eta\beta \in \Lambda^n} \overline{c(\nu,\alpha)}c(\eta,\beta) t_\alpha t^*_\beta
        \quad\text{ and}\label{eq:inside product} \\
t_\mu t^*_\nu t_\eta t^*_\zeta
    &= \sum_{\nu\alpha = \eta\beta \in \Lambda^n}
            c(\mu,\alpha)\overline{c(\nu,\alpha)}c(\eta,\beta)\overline{c(\zeta,\beta)} t_{\mu\alpha} t^*_{\zeta\beta}.
            \label{eq:stars to right}
\end{align}
Consequently, $\lsp\{ s_\mu s^*_\nu : (\mu,\nu) \in \Lambda \mathbin{{_s*_s}} \Lambda\}$ is a dense
$*$-subalgebra of $C^*(\Lambda, c)$.
\end{lem}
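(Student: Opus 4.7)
The plan is to prove \eqref{eq:inside product} first, derive \eqref{eq:stars to right} as a quick corollary, and then deduce the density statement. The main idea behind \eqref{eq:inside product} is to insert a resolution of the projection $t_{s(\nu)}$ into $t^*_\nu$: relation (CK3) combined with self-adjointness of $t_{s(\nu)}$ from (CK1) gives $t^*_\nu = t^*_\nu t_\nu t^*_\nu = t_{s(\nu)} t^*_\nu$, and since $n \ge d(\nu)$, (CK4) applied at $s(\nu)$ yields
\[
t^*_\nu = \sum_{\alpha \in s(\nu)\Lambda^{n-d(\nu)}} t_\alpha t^*_\alpha t^*_\nu
    = \sum_{\alpha \in s(\nu)\Lambda^{n-d(\nu)}} \overline{c(\nu,\alpha)}\, t_\alpha t^*_{\nu\alpha},
\]
where the second equality is the adjoint of (CK2) $t_\nu t_\alpha = c(\nu,\alpha) t_{\nu\alpha}$. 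A symmetric computation gives $t_\eta = \sum_{\beta \in s(\eta)\Lambda^{n-d(\eta)}} c(\eta,\beta) t_{\eta\beta} t^*_\beta$. Multiplying these two expressions and invoking Remark~\ref{rmk:orth range proj} --- which says that $t^*_{\nu\alpha} t_{\eta\beta} = 0$ unless $\nu\alpha = \eta\beta$, in which case (CK3) gives $t^*_{\nu\alpha} t_{\eta\beta} = t_{s(\alpha)}$, which is absorbed into $t_\alpha$ on its right --- collapses the double sum into exactly \eqref{eq:inside product}.

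Then \eqref{eq:stars to right} is immediate by multiplying \eqref{eq:inside product} on the left by $t_\mu$ and on the right by $t^*_\zeta$ and applying (CK2) twice: $t_\mu t_\alpha = c(\mu,\alpha) t_{\mu\alpha}$ and $t^*_\beta t^*_\zeta = \overline{c(\zeta,\beta)}\, t^*_{\zeta\beta}$.

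For the density statement, applied now to the universal family $s$ in $C^*(\Lambda,c)$, let $B := \lsp\{s_\mu s^*_\nu : (\mu,\nu) \in \Lambda \mathbin{{_s*_s}} \Lambda\}$. Then $B$ is closed under the adjoint because $(s_\mu s^*_\nu)^* = s_\nu s^*_\mu$, and closed under multiplication by \eqref{eq:stars to right} (taking $t = s$ and any convenient $n \ge d(\nu) \vee d(\eta)$). Moreover each generator $s_\lambda$ lies in $B$: since $c$ is normalised, (CK2) with $\nu = s(\lambda)$ gives $s_\lambda s_{s(\lambda)} = c(\lambda, s(\lambda)) s_{\lambda} = s_\lambda$, and $s_{s(\lambda)}$ is self-adjoint by (CK1), so $s_\lambda = s_\lambda s^*_{s(\lambda)} \in B$. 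Since $C^*(\Lambda,c)$ is by definition the $C^*$-algebra generated by $\{s_\lambda : \lambda \in \Lambda\}$, the $*$-subalgebra $B$ must be dense.

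I do not anticipate any real obstacle here: all three parts are routine manipulations of the Cuntz-Krieger relations. The only points requiring care are (i) keeping track of the cocycle twists across each application of (CK2) --- in particular using $s(\nu\alpha) = s(\alpha)$ to absorb the resulting $t_{s(\alpha)}$ into $t_\alpha$ --- and (ii) invoking normalisation of $c$ (so $c(\lambda, s(\lambda)) = 1$) to peel off the projection $s_{s(\lambda)}$ when verifying that generators lie in $B$.
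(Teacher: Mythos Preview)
Your proposal is correct and essentially follows the same approach as the paper. The only cosmetic difference is where the (CK4) expansion is inserted: the paper places a single resolution $\sum_{\lambda \in r(\eta)\Lambda^n} t_\lambda t^*_\lambda$ between $t^*_\nu$ and $t_\eta$ and then factors each $\lambda$ in two ways, whereas you expand separately at $s(\nu)$ and $s(\eta)$; both routes collapse via the same orthogonality argument from Remark~\ref{rmk:orth range proj}, and your density argument spells out what the paper leaves implicit.
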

\begin{proof}
Using~(CK4), we have
\[
t_\nu^* t_\eta
    = \sum_{\lambda \in r(\eta)\Lambda^n} t^*_\nu t_\lambda t^*_\lambda t_\eta.
\]
Since $n \ge d(\nu) \vee d(\eta)$, given $\lambda \in \Lambda^n$, the factorisation property allows
us to factor $\lambda = \lambda^1 \lambda^2 = \lambda^3\lambda^4$ where $d(\lambda^1) = d(\nu)$ and
$d(\lambda^3) = d(\eta)$, and then by~(CK2), (CK3)~and Remark~\ref{rmk:orth range proj}, we have
\begin{align*}
t_\nu^* t_\eta
    &= \sum_{\lambda \in r(\eta) \Lambda^n}
        \overline{c(\lambda^1,\lambda^2)} c(\lambda^3,\lambda^4)
        t^*_\nu t_{\lambda^1} t_{\lambda^2} t^*_{\lambda^4} t^*_{\lambda^3} t_\eta\\
    &= \sum_{\nu\alpha=\eta\beta \in \Lambda^n}
        \overline{c(\nu,\alpha)} c(\eta,\beta)
        t_{r(\alpha)} t_{\alpha} t^*_{\beta} t_{r(\beta)}.
\end{align*}
Equation~\ref{eq:inside product} follows from~(CK2) because $c$ is normalised. Left-multiplying by
$t_\mu$, right-multiplying by $t^*_\zeta$ and applying~(CK2) establishes~\eqref{eq:stars to right}.
\end{proof}

\begin{rmk}
If we apply Lemma~\ref{lem:spanning} with $n = d(\nu) \vee d(\eta)$, then we obtain
\begin{align*}
t^*_\nu t_\eta
    &= \sum_{\nu\alpha = \eta\beta \in \MCE(\nu,\eta)} \overline{c(\nu,\alpha)}c(\eta,\beta) t_\alpha t^*_\beta.
        \quad\text{ and}\\
t_\mu t^*_\nu t_\eta t^*_\zeta
    &= \sum_{\nu\alpha = \eta\beta \in \MCE(\mu,\nu)}
        c(\mu,\alpha)\overline{c(\nu,\alpha)}c(\eta,\beta)\overline{c(\zeta,\beta)} t_{\mu\alpha} t^*_{\zeta\beta}.
\end{align*}
\end{rmk}

\begin{lem}\label{lem:fpa}
Let $\Lambda$ be a row-finite $k$-graph with no sources and let $c \in \Zcat{2}(\Lambda, \TT)$. There
is a strongly continuous action $\gamma : \TT^k \to \Aut C^* ( \Lambda , c )$ such that
\[
\gamma_z(s_\lambda) = z^{d(\lambda)} s_\lambda\quad\text{ for all $\lambda \in \Lambda$.}
\]
Moreover $C^*(\Lambda, c)^{\gamma} = \clsp\{ s_\mu s^*_\nu : (\mu,\nu) \in \Lambda
\mathbin{{_s*_s}} \Lambda, d(\mu) = d(\nu)\}$.
\end{lem}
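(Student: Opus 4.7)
The plan is to prove the two assertions separately. For the existence of the gauge action, I will invoke the universal property of $C^*(\Lambda,c)$ from Notation~\ref{rmk:old twisted algebra}. Fix $z \in \TT^k$ and define $t^z_\lambda := z^{d(\lambda)} s_\lambda$ for each $\lambda \in \Lambda$. Since $|z^{d(\lambda)}|=1$, relations (CK1), (CK3) and (CK4) for the family $\{t^z_\lambda\}$ reduce immediately to those for $\{s_\lambda\}$. For (CK2), given $(\mu,\nu) \in \Lambda^{*2}$,
\[
t^z_\mu t^z_\nu = z^{d(\mu)+d(\nu)} s_\mu s_\nu = z^{d(\mu\nu)} c(\mu,\nu) s_{\mu\nu} = c(\mu,\nu) t^z_{\mu\nu},
\]
so universality yields a homomorphism $\gamma_z : C^*(\Lambda,c) \to C^*(\Lambda,c)$ with $\gamma_z(s_\lambda) = z^{d(\lambda)}s_\lambda$. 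Clearly $\gamma_z \circ \gamma_{z^{-1}}$ fixes every generator, so each $\gamma_z$ is an automorphism, and $z \mapsto \gamma_z$ is a homomorphism from $\TT^k$ to $\Aut(C^*(\Lambda,c))$. Strong continuity is checked first on generators (where $z \mapsto z^{d(\lambda)} s_\lambda$ is plainly continuous), extended to $\lsp\{s_\mu s_\nu^*\}$ by linearity and to all of $C^*(\Lambda,c)$ by Lemma~\ref{lem:spanning} together with a standard $\varepsilon/3$ argument using that each $\gamma_z$ is isometric.

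For the fixed-point algebra, one inclusion is immediate: if $d(\mu) = d(\nu)$, then $\gamma_z(s_\mu s^*_\nu) = z^{d(\mu) - d(\nu)} s_\mu s^*_\nu = s_\mu s^*_\nu$. For the reverse inclusion, define the averaging map
\[
\Phi : C^*(\Lambda,c) \to C^*(\Lambda,c), \qquad \Phi(a) := \int_{\TT^k} \gamma_z(a)\,dz,
\]
where $dz$ is normalised Haar measure on $\TT^k$. Strong continuity of $\gamma$ guarantees that the integrand is continuous, so $\Phi$ is a well-defined contractive linear map, and a standard calculation shows that its image is exactly $C^*(\Lambda,c)^\gamma$ and that $\Phi$ restricts to the identity on the fixed-point algebra.

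A direct computation shows
\[
\Phi(s_\mu s^*_\nu) = \Big(\int_{\TT^k} z^{d(\mu)-d(\nu)}\,dz\Big) s_\mu s^*_\nu
    = \begin{cases} s_\mu s^*_\nu & \text{if } d(\mu) = d(\nu),\\ 0 & \text{otherwise.}\end{cases}
\]
By Lemma~\ref{lem:spanning}, $\lsp\{s_\mu s^*_\nu : (\mu,\nu) \in \Lambda \mathbin{{_s*_s}} \Lambda\}$ is dense in $C^*(\Lambda,c)$, so continuity of $\Phi$ yields
\[
\Phi(C^*(\Lambda,c)) = \clsp\{s_\mu s^*_\nu : (\mu,\nu) \in \Lambda \mathbin{{_s*_s}} \Lambda,\ d(\mu) = d(\nu)\}.
\]
Combining this with $C^*(\Lambda,c)^\gamma = \Phi(C^*(\Lambda,c))$ gives the claimed description.

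The only mildly delicate point is verifying strong continuity of $\gamma$, which is entirely routine given Lemma~\ref{lem:spanning}; the hardest ingredient — the spanning result — is already in hand, so no substantial obstacle arises.
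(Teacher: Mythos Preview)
Your proof is correct and follows essentially the same approach as the paper: construct the gauge action via the universal property by checking that $\lambda \mapsto z^{d(\lambda)} s_\lambda$ is a Cuntz-Krieger $(\Lambda,c)$-family, obtain strong continuity via an $\varepsilon/3$ argument using Lemma~\ref{lem:spanning}, and identify the fixed-point algebra by averaging over $\TT^k$ and computing $\Phi(s_\mu s_\nu^*)$. The only difference is presentational --- you spell out the (CK2) verification and the integral computation explicitly, whereas the paper leaves these as routine.
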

\begin{proof}
Fix $z \in \TT^k$ and set $t^z_{\lambda} = z^{d(\lambda)}s_\lambda$ for all $\lambda \in \Lambda$.
It is routine to verify that $\lambda \mapsto t^z_\lambda$ satisfies (CK1)--(CK4) of
Definition~\ref{dfn:twisted family}, so it is a Cuntz-Krieger $(\Lambda,c)$-family. Therefore the
universal property of $C^*(\Lambda, c)$ yields an endomorphism $\gamma_z$ of $C^*(\Lambda, c)$
satisfying $\gamma_z(s_\lambda) = t^z_{\lambda}= z^{d(\lambda)}s_\lambda$ for all $\lambda \in
\Lambda$ with inverse $\gamma_{\overline{z}}$; hence, $\gamma_z$ is an automorphism. Since
$\gamma_{wz}(s_\lambda) = \gamma_w(\gamma_z(s_\lambda))$ for all $\lambda \in \Lambda$, the map
$\gamma : \TT \to \Aut(C^*(\Lambda, c))$ is a homomorphism. That $z \mapsto \gamma_z(a)$ is
continuous for each $a$ follows from an $\varepsilon/3$-argument using Lemma~\ref{lem:spanning}.
Thus, $\gamma$ defines a strongly continuous action of $\TT^k$ on $C^*(\Lambda,c)$ with the desired
property.

That $C^*(\Lambda, c)^{\gamma} = \clsp\{t_\mu t^*_\nu : (\mu,\nu) \in \Lambda \mathbin{{_s*_s}}
\Lambda, d(\mu) = d(\nu)\}$ is standard: the containment ``$\supseteq$" is clear and the reverse
containment follows from the observation that the faithful conditional expectation $\Phi^c(a) :=
\int_{\TT^k} \gamma_z(a)\,dz$ onto $C^*(\Lambda,c)^{\gamma}$ annihilates $s_\mu s^*_\nu$ whenever
$d(\mu) \not= d(\nu)$.
\end{proof}

\begin{ntn}
Let $X$ be a countable set. We write $\Kk_X$ for the universal $C^*$-algebra generated by matrix
units $\{\theta_{x,y} : x,y \in X\}$ (see
\cite[Remark~A.10]{Raeburn:Graphalgebras05}). That is the $\theta_{x,y}$ satisfy $\theta_{x,y}^* =
\theta_{y,x}$ and $\theta_{w,x}\theta_{y,z} = \delta_{x,y} \theta_{w,z}$. The algebra $\Kk_X$ is
simple, and is canonically isomorphic to $\Kk(\ell^2(X))$. If $X$ is finite, then any enumeration
of $X$ induces an isomorphism $\Kk_X \cong M_{|X|}(\CC)$.
\end{ntn}

\begin{prop}\label{prp:gauge action and fpa}
Let $\Lambda$ be a row-finite $k$-graph with no sources.  Fix $c \in \Zcat2(\Lambda, \TT)$.
\begin{enumerate}
\item\label{it:compact summands} For $n \in \NN^k$, $\clsp\{s_\mu s^*_\nu : \mu,\nu \in
    \Lambda^n\}$ is a $C^*$-subalgebra of $C^*(\Lambda,c)^\gamma$, and the assignment $s_\mu
    s^*_\nu \mapsto \theta_{\mu,\nu}$ determines an isomorphism
    \[
        \clsp\{s_\mu s^*_\nu : \mu,\nu \in \Lambda^n\} \cong \bigoplus_{v \in \Lambda^0} \Kk_{\Lambda^n v}.
    \]
\item\label{it:nested} If $m \le n \in \NN^k$, then $\clsp\{s_\mu s^*_\nu : \mu,\nu \in
    \Lambda^m\} \subseteq \clsp\{s_\mu s^*_\nu : \mu,\nu \in \Lambda^n\}$. Moreover,
    \[
    C^*(\Lambda, c)^\gamma = \overline{\bigcup_{n \in \NN^k} \lsp\{s_\mu s^*_\nu : \mu,\nu \in \Lambda^n\}} \qquad\text{is AF.}
    \]
\item\label{it:injectivity} Given a Cuntz-Krieger $(\Lambda,c)$-family $t$, the homomorphism
    $\pi_t$ induced by the universal property restricts to an injection on
    $C^*(\Lambda,c)^\gamma$ if and only if each $t_v$ is nonzero.
\item\label{it:cocycle-independent}  For any two cocycles $c_1, c_2 \in \Zcat2(\Lambda,\TT)$,
    the fixed-point algebras $C^*(\Lambda, c_1)^\gamma$ and $C^*(\Lambda, c_2)^\gamma$ are
    isomorphic.
\end{enumerate}
\end{prop}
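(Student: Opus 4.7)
My plan is to prove the four parts in order, with all the real difficulty concentrated in~(\ref{it:cocycle-independent}).

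For~(\ref{it:compact summands}), I would first observe that for $\mu,\nu \in \Lambda^n$ with $\mu \ne \nu$, Remark~\ref{rmk:orth range proj} gives $s_\mu s^*_\mu \perp s_\nu s^*_\nu$, which forces $s^*_\nu s_\mu = 0$ via $(s^*_\nu s_\mu)^*(s^*_\nu s_\mu) = s^*_\mu (s_\nu s^*_\nu) s_\mu = 0$. Combined with $s^*_\mu s_\mu = s_{s(\mu)}$ from (CK3), and using the normalisation of $c$ in (CK2), this produces the matrix-unit identities $(s_\mu s^*_\nu)(s_\eta s^*_\zeta) = \delta_{\nu,\eta}\, s_\mu s^*_\zeta$ for $\mu,\nu,\eta,\zeta \in \Lambda^n$. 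So within each vertex $v$ the family $\{s_\mu s^*_\nu : \mu,\nu \in \Lambda^n v\}$ satisfies the matrix-unit relations, and the families for distinct vertices are mutually orthogonal. Since each $s_\mu s^*_\mu$ is nonzero by Corollary~\ref{cor:gens nonzero}, the universal property of $\Kk$ gives the isomorphism $B_n := \clsp\{s_\mu s^*_\nu : \mu,\nu \in \Lambda^n\} \cong \bigoplus_v \Kk_{\Lambda^n v}$.

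For~(\ref{it:nested}), apply (CK4) at $s(\mu) = s(\nu)$ with degree $n-m$ and then (CK2) on both sides to obtain
\[
s_\mu s^*_\nu = \sum_{\lambda \in s(\mu)\Lambda^{n-m}} c(\mu,\lambda)\overline{c(\nu,\lambda)}\, s_{\mu\lambda} s^*_{\nu\lambda} \in B_n,
\]
yielding the nested filtration, whose closure is $C^*(\Lambda,c)^\gamma$ by Lemma~\ref{lem:fpa} and hence AF. For~(\ref{it:injectivity}), $\pi_t$ is injective on $\overline{\bigcup_n B_n}$ iff it is injective on every $B_n$. Since each $B_n$ is a $c_0$-direct sum of elementary $C^*$-algebras, this is equivalent to $\pi_t$ being nonzero on every summand, i.e., $\pi_t(s_\mu s^*_\mu) = t_\mu t^*_\mu \ne 0$ for some $\mu$ with $s(\mu) = v$ whenever $\Lambda^n v \ne \emptyset$. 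Using $t^*_\mu t_\mu = t_{s(\mu)}$ this reduces to $t_v \ne 0$ for all such $v$; taking $n = 0$ gives necessity of $t_v \ne 0$ for every $v$, and this condition is then plainly sufficient at every level.

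The main obstacle is~(\ref{it:cocycle-independent}). The inclusion formula from~(\ref{it:nested}) shows that the Bratteli diagram of $C^*(\Lambda,c)^\gamma$ --- vertices $(n,v)$ with $\Lambda^n v \ne \emptyset$, and multiplicity $|v\Lambda^{n-m}w|$ on edges from $(m,v)$ to $(n,w)$ --- does not depend on $c$: the phase factors $c(\mu,\lambda)\overline{c(\nu,\lambda)}$ affect only the basis used to identify matrix summands, not the multiplicities. Concretely, conjugation within $\Kk_{\Lambda^n w}$ by the diagonal unitary $\sum_{\beta \in \Lambda^n w} u^c(\beta)\, e_{\beta,\beta}$ with $u^c(\mu\lambda) := c(\mu,\lambda)$ (using the unique factorisation $\beta = \mu\lambda$ of the appropriate degrees) converts the $c$-twisted inclusion into the untwisted one at that single level. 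Restricting to the cofinal chain $\{j\mathbf{1}_k : j \in \NN\}$ in $\NN^k$, along which $C^*(\Lambda,c)^\gamma$ becomes a classical AF inductive limit, the classification of AF algebras by their Bratteli diagrams (equivalently by Elliott's pointed ordered $K_0$-invariant) then yields $C^*(\Lambda,c_1)^\gamma \cong C^*(\Lambda,c_2)^\gamma$. The subtle point is that the level-wise unitaries conjugating the $c_1$-inclusions to the $c_2$-inclusions need not themselves intertwine consecutive inclusions, so one must invoke the AF classification theorem rather than attempting to construct the isomorphism on generators directly.
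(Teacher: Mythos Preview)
Your proposal is correct and follows essentially the same route as the paper's proof: matrix-unit relations from the orthogonality of same-degree range projections and the normalisation of $c$, the (CK4)-based nesting formula, injectivity via simplicity of the elementary summands, and cocycle-independence via the observation that the partial-inclusion multiplicities $|v\Lambda^{n-m}w|$ do not see the cocycle, followed by an appeal to the Bratteli-diagram classification of AF algebras. Your extra remarks about the diagonal unitary conjugation at a single level and the need to pass to a cofinal $\NN$-indexed chain are correct elaborations of what the paper leaves implicit in the sentence ``AF algebras are completely determined by the dimensions of the summands of the approximating subalgebras and by the multiplicities of the partial inclusions.''
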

\begin{proof} (\ref{it:compact summands}) Lemma~\ref{lem:fpa} implies that
\begin{equation}\label{eq:dir lim}
C^*(\Lambda,c)^\gamma
    = \clsp\{s_\mu s^*_\nu : d(\mu) = d(\nu)\}
    = \overline{\bigcup_{n \in \NN^k} \lsp\{s_\mu s^*_\nu : \mu,\nu \in \Lambda^n\}}.
\end{equation}
Furthermore, equation~\eqref{eq:stars to right} and that $c(\mu, s(\mu)) = 1$ implies that if
$\mu, \nu, \eta, \zeta \in \Lambda^n$ with $s(\mu) = s(\nu)$ and $s(\eta) = s(\zeta)$, then
$s_\mu s^*_\nu s_\eta s^*_\zeta = \delta_{\mu,\nu} s_\mu s^*_\zeta$. So for each $n \in \NN^k$,
the subspace $\clsp\{s_\mu s^*_\nu : \mu,\nu \in \Lambda^n\}$ is closed under multiplication.
Since it is clearly closed under involution, it is a $C^*$-subalgebra of $C^*(\Lambda,
c)^\gamma$. That $s_\mu s^*_\nu \mapsto \theta_{\mu,\nu}$ determines the desired isomorphism
with $\bigoplus_{v \in \Lambda^0} \Kk_{\Lambda^n v}$ follows from the uniqueness of the latter.

(\ref{it:nested}) Relations (CK2)~and~(CK4) give $\clsp\{s_\mu
s^*_\nu : \mu,\nu \in \Lambda^m\} \subseteq \clsp\{s_\mu s^*_\nu : \mu,\nu \in 
\Lambda^n\}$ for $m \le n$.
That $C^*(\Lambda, c)^\gamma = \overline{\bigcup_{n \in \NN^k} \lsp\{s_\mu s^*_\nu : \mu,\nu
\in \Lambda^n\}}$ follows from Lemma~\ref{lem:fpa}.  Since  $C^*(\Lambda, c)^\gamma$ is
an inductive limit of AF algebras it is also AF.

(\ref{it:injectivity}) The ``only if'' follows from Corollary~\ref{cor:gens nonzero}. For the ``if''
implication, observe that each minimal projection $s_\mu s^*_\mu$ in $\clsp\{s_\mu s^*_\nu :
\mu,\nu \in \Lambda^n\}$ is equivalent to $s^*_\mu s_\mu = s_{s(\mu)}$. So if each $s_v$ is
nonzero, then each $s_\mu s^*_\mu$ is nonzero. The result then follows from the direct-limit
decomposition~\eqref{eq:dir lim}, the simplicity of each $\Kk_{\Lambda^n v}$ and the ideal
structure of direct sums of $C^*$-algebras.

(\ref{it:cocycle-independent}) Fix $n \in \NN^k$ and $v \in \Lambda^0$, and fix cocycles $c_1,
c_2 \in \Zcat2(\Lambda,\TT)$. The assignments $s^{c_1}_\mu(s^{c_1}_\nu)^* \mapsto
\theta_{\mu,\nu}\mapsto s^{c_2}_\mu(s^{c_2}_\nu)^*$ determine isomorphisms
\[
\clsp\{s^{c_1}_\mu(s^{c_1}_\nu)^* : \mu,\nu \in \Lambda^n\}  \cong \Kk_{\Lambda^n v}
    \cong \clsp\{s^{c_2}_\mu(s^{c_2}_\nu)^* : \mu,\nu \in \Lambda^n\}.
\]

Moreover, for each pair $v,w \in \Lambda^0$ and $m \le n \in \NN^k$, the multiplicity of the
partial inclusion
\[
    \Kk_{\Lambda^m v}
        \cong \clsp\{s^{c_1}_\mu(s^{c_1}_\nu)^* : \mu,\nu \in \Lambda^m v\}
        \hookrightarrow \clsp\{s^{c_1}_\mu(s^{c_1}_\nu)^* : \mu,\nu \in \Lambda^n w\}
        \cong \Kk_{\Lambda^n w}
\]
is $|v \Lambda^{n-m} w|$ which does not depend on the cocycle $c_1$. Since AF algebras are
completely determined by the dimensions of the summands of the approximating subalgebras and by the
multiplicities of the partial inclusions, this proves the result.
\end{proof}

With the preceding analysis in hand, we can prove a version of an Huef and Raeburn's
gauge-invariant uniqueness theorem for twisted $k$-graph $C^*$-algebras.

\begin{cor}[The gauge-invariant uniqueness theorem]\label{cor:giut}
Let $\Lambda$ be a row-finite $k$-graph with no sources, and fix $c \in \Zcat2(\Lambda,\TT)$. Let
$t : \Lambda \to B$ be a Cuntz-Krieger $(\Lambda,c)$-family in a $C^*$-algebra $B$. Suppose that
there is a strongly continuous action $\beta$ of $\TT^k$ on $B$ satisfying $\beta_z(t_\lambda) =
z^{d(\lambda)} t_\lambda$ for all $\lambda \in \Lambda$ and $z \in \TT^k$. Then the induced
homomorphism $\pi_t : C^*(\Lambda,c) \to B$ is injective if and only if $t_v \not= 0$ for all $v
\in \Lambda^0$.
\end{cor}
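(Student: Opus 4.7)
The plan is to follow the standard averaging strategy for gauge-invariant uniqueness theorems, leveraging the structural analysis of the fixed-point algebra that has already been carried out in Proposition~\ref{prp:gauge action and fpa}.

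For the ``only if'' direction, I will invoke Corollary~\ref{cor:gens nonzero}: each generator $s_v \in C^*(\Lambda,c)$ is nonzero, so if $\pi_t$ is injective then $t_v = \pi_t(s_v) \neq 0$ for every $v \in \Lambda^0$. This part is immediate.

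For the ``if'' direction, suppose $t_v \neq 0$ for every $v$. I first construct a conditional expectation on $B$ by averaging the gauge-type action $\beta$: for $b \in B$, set
\[
\Phi^B(b) := \int_{\TT^k} \beta_z(b)\,dz,
\]
which is a well-defined positive linear map onto the fixed-point algebra $B^\beta$ because $\beta$ is strongly continuous. On the $C^*(\Lambda,c)$ side, the analogous averaging against the gauge action $\gamma$ of Lemma~\ref{lem:fpa} yields the faithful conditional expectation
\[
\Phi^c(a) := \int_{\TT^k} \gamma_z(a)\,dz
\]
onto $C^*(\Lambda,c)^\gamma$ noted in the proof of Lemma~\ref{lem:fpa}. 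The covariance property $\beta_z(t_\lambda) = z^{d(\lambda)} t_\lambda = \pi_t(\gamma_z(s_\lambda))$, applied to the generators and extended by linearity and continuity, gives $\pi_t \circ \gamma_z = \beta_z \circ \pi_t$ for all $z \in \TT^k$; integrating over $\TT^k$ yields the key intertwining identity
\[
\pi_t \circ \Phi^c = \Phi^B \circ \pi_t.
\]

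Next I invoke Proposition~\ref{prp:gauge action and fpa}(\ref{it:injectivity}): since every $t_v$ is nonzero, the restriction $\pi_t|_{C^*(\Lambda,c)^\gamma}$ is injective. Now suppose $a \in C^*(\Lambda,c)$ satisfies $\pi_t(a) = 0$. Then $\pi_t(a^*a) = 0$, so $\Phi^B(\pi_t(a^*a)) = 0$, and by the intertwining identity $\pi_t(\Phi^c(a^*a)) = 0$. Since $\Phi^c(a^*a) \in C^*(\Lambda,c)^\gamma$ and $\pi_t$ is injective there, we get $\Phi^c(a^*a) = 0$. Faithfulness of $\Phi^c$ (established in the proof of Lemma~\ref{lem:fpa}) then forces $a^*a = 0$, hence $a = 0$. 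Thus $\pi_t$ is injective, completing the proof.

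The main obstacle has already been addressed in the preceding section: it is the injectivity of $\pi_t$ on the AF core $C^*(\Lambda,c)^\gamma$, which required the detailed decomposition of the fixed-point algebra as an inductive limit of direct sums of matrix algebras in Proposition~\ref{prp:gauge action and fpa}(\ref{it:compact summands})--(\ref{it:nested}). Given that analysis, the remaining task here is the standard averaging argument; no further new ideas are needed.
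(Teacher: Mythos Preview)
Your proof is correct and follows essentially the same route as the paper's own argument: the ``only if'' direction via Corollary~\ref{cor:gens nonzero}, and the ``if'' direction via the averaging expectations, the intertwining identity $\pi_t \circ \Phi^c = \Phi^B \circ \pi_t$, injectivity on the core from Proposition~\ref{prp:gauge action and fpa}(\ref{it:injectivity}), and faithfulness of $\Phi^c$. The only differences are notational and in the level of detail you supply for the intertwining step.
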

\begin{proof}
The ``only if" direction follows from Corollary~\ref{cor:gens nonzero}.

The ``if'' direction follows from the following standard argument. Let $\Phi^\gamma :
C^*(\Lambda,c) \to C^*(\Lambda,c)^\gamma$ and $\Phi^\beta : C^*(t) \to C^*(t)^\gamma$ be the
conditional expectations obtained by averaging over $\gamma$ and $\beta$. Then $\Phi^\gamma$
is a faithful conditional expectation, and $\pi \circ \Phi^\gamma = \Phi^\beta \circ
\pi$. So for $a \in C^*(\Lambda,c)$, we have
\[
\pi(a) = 0
    \implies \pi(a^*a) = 0
    \implies \Phi^\beta(\pi_t(a^*a)) = 0
    \implies \pi(\Phi^\gamma(a^*a)) = 0.
\]
This forces $\Phi^\gamma(a^*a) = 0$ because $\pi$ restricts to an injection on
$C^*(\Lambda,c)^\gamma$ by Proposition~\ref{prp:gauge action and fpa}(\ref{it:injectivity}). Hence
$a^*a = 0$ because $\Phi^\gamma$ is faithful on positive elements, and then $a = 0$ by the
$C^*$-identity.
\end{proof}

\begin{cor}\label{cor:special groupoid algebra isomorphism}
Let $\Lambda$ be a row-finite $k$-graph with no sources. Let $\Pp$ be a subset of
$\Lambda \mathbin{{_s*_s}} \Lambda$ such that $\{(\lambda, s(\lambda)) : \lambda \in \Lambda\}
\subseteq \Pp$ and $\{Z(\mu,\nu) : (\mu,\nu) \in \Pp\}$ is a partition of $\Gg_\Lambda$. Fix $c \in
\Zcat2(\Lambda,\TT)$.  Then the homomorphism $\pi : C^*(\Lambda, c) \to C^*(\Gg_\Lambda, \sigma_c)$
of Theorem~\ref{thm: C*(Lambda,c) cong C*(G,omega)} satisfying $\pi(s_\lambda) = 1_{Z(\lambda,
s(\lambda))}$ for all $\lambda \in \Lambda$ is an isomorphism.
\end{cor}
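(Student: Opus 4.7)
The plan is to apply the gauge-invariant uniqueness theorem (Corollary~\ref{cor:giut}) to the homomorphism $\pi$ constructed in Theorem~\ref{thm: C*(Lambda,c) cong C*(G,omega)}. Theorem~\ref{thm: C*(Lambda,c) cong C*(G,omega)} already gives surjectivity, so it suffices to produce an appropriate gauge action on $C^*(\Gg_\Lambda, \sigma_c)$ and verify that the generators $\pi(s_v)$ are nonzero.

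First, I would use the continuous $\ZZ^k$-valued $1$-cocycle $\tilde d : \Gg_\Lambda \to \ZZ^k$ from Notation~\ref{ntn:dtilde} to define an action $\beta : \TT^k \to \Aut(C^*(\Gg_\Lambda, \sigma_c))$ on $C_c(\Gg_\Lambda, \sigma_c)$ by $(\beta_z f)(g) := z^{\tilde d(g)} f(g)$. The fact that $\tilde d$ is a groupoid homomorphism, so $\tilde d(\alpha\beta) = \tilde d(\alpha) + \tilde d(\beta)$, makes $\beta_z$ multiplicative with respect to the twisted convolution; since $\tilde d(g^{-1}) = -\tilde d(g)$, it also commutes with the involution. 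Standard arguments (continuity of $\tilde d$, compact support of elements of $C_c$) show that $\beta_z$ is norm-decreasing and that $z \mapsto \beta_z(f)$ is continuous for each $f \in C_c$, so $\beta$ extends to a strongly continuous action of $\TT^k$.

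Next, I would compute $\beta_z$ on the generators $\pi(s_\lambda) = 1_{Z(\lambda, s(\lambda))}$. Every $g \in Z(\lambda, s(\lambda))$ has the form $(\lambda x, d(\lambda), x)$, so $\tilde d(g) = d(\lambda)$, giving $\beta_z(1_{Z(\lambda, s(\lambda))}) = z^{d(\lambda)} 1_{Z(\lambda, s(\lambda))}$. In particular, $\beta_z \circ \pi = \pi \circ \gamma_z$ where $\gamma$ is the gauge action on $C^*(\Lambda, c)$ from Lemma~\ref{lem:fpa}. Because $\Lambda$ has no sources, for each $v \in \Lambda^0$ the cylinder $Z(v, v) \subseteq \Gg_\Lambda^{(0)}$ is a nonempty open subset of the unit space, so $\pi(s_v) = 1_{Z(v, v)} \ne 0$.

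With these verifications in hand, Corollary~\ref{cor:giut} applies to the Cuntz-Krieger $(\Lambda, c)$-family $t_\lambda := \pi(s_\lambda) = 1_{Z(\lambda, s(\lambda))}$ (this is a Cuntz-Krieger family by Theorem~\ref{thm: C*(Lambda,c) cong C*(G,omega)}), and yields that $\pi = \pi_t$ is injective. Combined with the surjectivity established in Theorem~\ref{thm: C*(Lambda,c) cong C*(G,omega)}, this shows $\pi$ is an isomorphism. There is no real obstacle here: the only point requiring any care is checking that $\beta$ genuinely defines an action on the \emph{twisted} groupoid $C^*$-algebra, and this follows directly from the cocycle property of $\tilde d$, which ensures $\beta_z$ preserves the $\sigma_c$-twisted convolution formula without any interaction with $\sigma_c$ itself.
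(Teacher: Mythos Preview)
Your proposal is correct and follows essentially the same route as the paper: construct the $\TT^k$-action $\beta$ on $C^*(\Gg_\Lambda,\sigma_c)$ via the $\ZZ^k$-valued cocycle $\tilde d$, check $\beta_z\circ\pi=\pi\circ\gamma_z$ on generators, observe that each $\pi(s_v)=1_{Z(v)}$ is nonzero, and invoke the gauge-invariant uniqueness theorem together with the surjectivity from Theorem~\ref{thm: C*(Lambda,c) cong C*(G,omega)}. The paper's proof is slightly terser about why $\beta$ is well defined on the twisted algebra, but otherwise the arguments coincide.
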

\begin{proof}
We showed in Theorem~\ref{thm: C*(Lambda,c) cong C*(G,omega)} that $\pi$ is a surjective
homomorphism, so it remains to show that it is injective. There is a strongly continuous action
$\beta$ of $\TT^k$ on $C^*(\Gg_\Lambda, \sigma_c)$ satisfying
\[
\beta_z (f)(x, \ell - m, y) = z^{ \ell - m}f(x, \ell - m, y)
\]
for all $f \in C_c(\Gg_\Lambda, \sigma_c)$, all $z \in \TT^k$ and all
$(x, \ell - m, y) \in \Gg_\Lambda$.  Moreover, $\beta_z \circ
\pi = \pi \circ \gamma_z$ for all $z$: for $z \in \TT^k$ and $\lambda \in \Lambda$,
\[
\pi(\gamma_z(s_\lambda)) = z^{d(\lambda)}1_{Z(\lambda, s(\lambda))}
= \beta_z(1_{Z(\lambda, s(\lambda))})
=  \beta_z(\pi(s_\lambda)).
\]
Since each $Z(v) \not= \emptyset$, each $\pi(s_v) = 1_{Z(v)}$ is nonzero, so
Corollary~\ref{cor:giut} implies that $\pi$ is injective.
\end{proof}

\begin{cor}\label{cor:groupoid algebra isomorphism}
Let $\Lambda$ be a row-finite $k$-graph with no sources. Let $\Pp$ be a subset of
$\Lambda \mathbin{{_s*_s}} \Lambda$ such that $\{Z(\mu,\nu) : (\mu,\nu) \in \Pp\}$ is a partition
of $\Gg_\Lambda$. Fix $c \in \Zcat2(\Lambda,\TT)$, and let $\sigma_c \in \Zgpd2(\Gg_\Lambda, \TT)$ be
the cocycle constructed from $\Pp$ as in Lemma~\ref{lem:groupoid-2-cocycle}. Then $C^*(\Lambda,c)
\cong C^*(\Gg_\Lambda, \sigma_c) \cong C^*_r(\Gg_\Lambda, \sigma_c)$.
\end{cor}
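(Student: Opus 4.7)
The plan is to assemble the corollary from three ingredients already available: Corollary~\ref{cor:special groupoid algebra isomorphism}, Theorem~\ref{thm:graph->gpd cohomology}, and the amenability of the path groupoid. The key point is that Corollary~\ref{cor:special groupoid algebra isomorphism} only establishes the isomorphism with $C^*(\Gg_\Lambda,\sigma_c)$ under the extra assumption that $\{(\lambda,s(\lambda)):\lambda\in\Lambda\}\subseteq\Pp$, so I need a way to pass from a \emph{special} $\Pp_0$ satisfying this condition to the arbitrary $\Pp$ in the hypothesis.

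First I would invoke Lemma~\ref{lem:twist-section} to fix a subset $\Pp_0\subseteq\Lambda\mathbin{{_s*_s}}\Lambda$ containing $\{(\lambda,s(\lambda)):\lambda\in\Lambda\}$ and such that $\{Z(\mu,\nu):(\mu,\nu)\in\Pp_0\}$ partitions $\Gg_\Lambda$. Let $\sigma_c^0\in\Zgpd2(\Gg_\Lambda,\TT)$ be the cocycle built from $\Pp_0$ and $c$ as in Lemma~\ref{lem:groupoid-2-cocycle}. Corollary~\ref{cor:special groupoid algebra isomorphism} then directly gives $C^*(\Lambda,c)\cong C^*(\Gg_\Lambda,\sigma_c^0)$.

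Next, Theorem~\ref{thm:graph->gpd cohomology} asserts that the cohomology class $[\sigma_c]\in\Hgpd2(\Gg_\Lambda,\TT)$ is independent of the choice of partition, so $\sigma_c$ and $\sigma_c^0$ are cohomologous: there exists $b\in\Cgpd1(\Gg_\Lambda,\TT)$ with $\sigma_c=(\dgpd1 b)\sigma_c^0$. It is a standard fact of twisted groupoid $C^*$-algebra theory (traceable to \cite{Renault1980}) that cohomologous continuous $\TT$-valued $2$-cocycles on an \'etale groupoid produce isomorphic full and reduced twisted $C^*$-algebras, via the map $f\mapsto bf$ on $C_c(\Gg_\Lambda)$, which intertwines the two twisted convolution products and involutions. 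Applying this yields $C^*(\Gg_\Lambda,\sigma_c^0)\cong C^*(\Gg_\Lambda,\sigma_c)$, and combining with the previous step gives the first asserted isomorphism $C^*(\Lambda,c)\cong C^*(\Gg_\Lambda,\sigma_c)$. The same argument applied on the reduced side yields $C^*_r(\Gg_\Lambda,\sigma_c^0)\cong C^*_r(\Gg_\Lambda,\sigma_c)$.

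For the final isomorphism $C^*(\Gg_\Lambda,\sigma_c)\cong C^*_r(\Gg_\Lambda,\sigma_c)$, I would invoke amenability of the path groupoid: for a row-finite $k$-graph with no sources, $\Gg_\Lambda$ is an amenable \'etale groupoid (see \cite[Lemma~3.5]{KP2000} and the discussion there), and for amenable groupoids the full and reduced twisted $C^*$-algebras coincide for every continuous $\TT$-valued $2$-cocycle. The main obstacle is really bookkeeping rather than mathematical content: one must verify that the ``cohomologous implies isomorphic'' machinery works with the normalised conventions used here and with the specific coboundary produced in the proof of Theorem~\ref{thm:graph->gpd cohomology}; once this standard computation is in hand, the corollary follows immediately from the chain of isomorphisms assembled above.
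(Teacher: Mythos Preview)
Your proof of the first isomorphism $C^*(\Lambda,c)\cong C^*(\Gg_\Lambda,\sigma_c)$ is essentially identical to the paper's: both invoke Lemma~\ref{lem:twist-section} to obtain a special partition $\Qq$ (your $\Pp_0$), apply Corollary~\ref{cor:special groupoid algebra isomorphism} to get $C^*(\Lambda,c)\cong C^*(\Gg_\Lambda,\sigma^{\Qq}_c)$, use Theorem~\ref{thm:graph->gpd cohomology} to see that $\sigma^{\Qq}_c$ and $\sigma_c$ are cohomologous, and then appeal to Renault's result (the paper cites \cite[Proposition~II.1.2]{Renault1980} specifically) that cohomologous continuous cocycles yield isomorphic twisted groupoid $C^*$-algebras.

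For $C^*(\Gg_\Lambda,\sigma_c)\cong C^*_r(\Gg_\Lambda,\sigma_c)$ you take a genuinely different route. The paper does \emph{not} invoke amenability of $\Gg_\Lambda$; instead it composes the isomorphism $C^*(\Lambda,c)\to C^*(\Gg_\Lambda,\sigma^{\Qq}_c)$ with the quotient map to $C^*_r(\Gg_\Lambda,\sigma^{\Qq}_c)$, observes that the $\ZZ^k$-grading of $\Gg_\Lambda$ induces a compatible $\TT^k$-action on the reduced algebra, and then re-runs the gauge-invariant uniqueness argument (Corollary~\ref{cor:giut}) to conclude that this composite is injective. Your amenability argument is correct and arguably more conceptual---$\Gg_\Lambda$ is indeed amenable, and amenability passes to twisted $C^*$-algebras---but the precise location in \cite{KP2000} is Section~5 (the proof of Theorem~5.5), not Lemma~3.5. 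The paper's approach has the advantage of being self-contained: it uses only the gauge-invariant uniqueness theorem just established, avoiding any appeal to the separate amenability result. Your approach has the advantage of being shorter and making explicit the structural reason why full and reduced coincide.
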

\begin{proof}
By Lemma~\ref{lem:twist-section} there exists a set $\Qq \subseteq \Lambda
\mathbin{{_s*_s}} \Lambda$ such that $\{Z(\mu,\nu) : (\mu,\nu) \in \Qq\}$ is a partition of
$\Gg_\Lambda$ and such that $(\lambda, s(\lambda)) \in \Qq$ for all $\lambda \in \Lambda$. Let
$\sigma^\Qq_c \in \Zgpd2(\Gg_\Lambda, \TT)$ be the cocycle constructed from $\Qq$ as in
Lemma~\ref{lem:groupoid-2-cocycle}. Corollary~\ref{cor:special groupoid algebra isomorphism}
implies that $C^*(\Lambda, c) \cong C^*(\Gg_\Lambda, \sigma^\Qq_c)$. Moreover,
Theorem~\ref{thm:graph->gpd cohomology} implies that $\sigma^\Qq_c$ and $\sigma_c$ are cohomologous
in $\tilde{Z}^2(\Gg_\Lambda, \TT)$, and then \cite[Proposition~II.1.2]{Renault1980} implies that
$C^*(\Gg_\Lambda, \sigma^\Qq_c) \cong C^*(\Gg_\Lambda, \sigma_c)$.

For the assertion that $C^*(\Gg_\Lambda, \sigma_c) = C^*_r(\Gg_\Lambda, \sigma_c)$, let $\psi :
C^*(\Lambda, c) \to C^*_r(\Gg_\Lambda, \sigma^{\Qq}_c)$ be the homomorphism obtained by composing
the quotient map $q : C^*(\Gg_\Lambda, \sigma^{\Qq}_c) \to C^*_r(\Gg_\Lambda, \sigma^{\Qq}_c)$ with
the isomorphism $C^*(\Lambda, c) \cong C^*(\Gg_\Lambda, \sigma^{\Qq}_c)$ of
Corollary~\ref{cor:special groupoid algebra isomorphism}. The $\ZZ^k$-grading of $\Gg_\Lambda$
induces a strongly continuous $\TT^k$-action on $C^*_r(\Gg_\Lambda, \sigma^{\Qq}_c)$ which is
compatible under $\psi$ with the gauge action on $C^*(\Lambda, c)$. So the argument of the
preceding paragraph also applies to the reduced $C^*$-algebra, giving $C^*(\Gg_\Lambda, \sigma_c)
\cong C^*(\Lambda, c) \cong C_r^*(\Gg_\Lambda, \sigma_c)$.
\end{proof}

\section{Structure theory}\label{sec:structure}

In this section we establish some structure theorems for twisted $k$-graph $C^*$-algebras. We begin
with a version of the Cuntz-Krieger uniqueness theorem and a simplicity result that follow from
Renault's structure theory for groupoid $C^*$-algebras \cite{Renault1980} and
Corollary~\ref{cor:groupoid algebra isomorphism}.

Recall from \cite[Definition~3.1]{LS2010} that a row-finite $k$-graph with no sources is said to be
\emph{aperiodic} if for every pair $\alpha,\beta$ of distinct elements of $\Lambda$ such that
$s(\alpha) = s(\beta)$, there exists $\tau \in s(\alpha)\Lambda$ such that $\MCE(\alpha\tau,
\beta\tau) = \emptyset$.

\begin{rmk}\label{rmk:aperiodicity}
The original aperiodicity condition~(A) of \cite{KP2000} insists that for each $v \in \Lambda^0$
there exists $x \in \Lambda^\infty$ with $r(x) = v$ such that for all $p \not= q \in \NN^k$, we
have $\sigma^p x \not= \sigma^q x$. Proposition~3.6 of \cite{LS2010} implies that condition~(A) is
equivalent to aperiodicity of $\Lambda$ in the sense described above, and
\cite[Lemma~3.2]{RobertsonSims:blms07} implies that this is also equivalent to the condition of
``no local periodicity" described there.
\end{rmk}

Recall from \cite[Definition~4.7]{KP2000} that a row-finite $k$-graph $\Lambda$ with no sources is
\emph{cofinal} if for every $x \in \Lambda^\infty$ and $v \in \Lambda^0$ there exists $n \in \NN^k$
such that $v \Lambda x(n) \not= \emptyset$.

\begin{cor}
Let $\Lambda$ be a row-finite $k$-graph with no sources. Suppose that $\Lambda$ is aperiodic. Fix
$c \in \Zcat2(\Lambda, \TT)$ and a Cuntz-Krieger $(\Lambda, c)$-family $t$ in a $C^*$-algebra $B$. If each $t_v \not= 0$ then the homomorphism $\pi_t : C^*(\Lambda, c) \to B$ is injective (so
$C^*(\Lambda, c) \cong C^*(t)$).  Moreover, $C^*(\Lambda, c)$ is simple if and only if $\Lambda$ is cofinal.
\end{cor}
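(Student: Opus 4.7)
The plan is to transport both claims to the twisted groupoid $C^*$-algebra via Corollary~\ref{cor:groupoid algebra isomorphism}, which supplies $C^*(\Lambda, c) \cong C^*(\Gg_\Lambda, \sigma_c) \cong C^*_r(\Gg_\Lambda, \sigma_c)$ for a suitable $\sigma_c$, and then invoke Renault's structure theory for twisted \'etale groupoid $C^*$-algebras from \cite{Renault1980}.

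First I would translate the hypotheses into groupoid-theoretic conditions. By Remark~\ref{rmk:aperiodicity}, aperiodicity of $\Lambda$ is equivalent to condition~(A) of \cite{KP2000}: for each $v \in \Lambda^0$ there exists $x \in v\Lambda^\infty$ with $\sigma^p x \neq \sigma^q x$ for all $p \neq q \in \NN^k$. Such an $x$ is a unit of $\Gg_\Lambda$ with trivial isotropy, and since the cylinder sets $\{Z(\lambda) : r(\lambda) = v\}$ form a neighbourhood base inside $Z(v)$, one aperiodic path at each vertex suffices to make the units with trivial isotropy dense in $\Gg_\Lambda^{(0)} = \Lambda^\infty$. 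Hence $\Gg_\Lambda$ is topologically principal. Similarly, cofinality of $\Lambda$ is equivalent to minimality of $\Gg_\Lambda$ (essentially \cite[Proposition~4.8]{KP2000}).

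For the Cuntz--Krieger uniqueness part, I would apply Renault's disintegration theorem: when $\Gg_\Lambda$ is topologically principal, a $*$-homomorphism out of $C^*_r(\Gg_\Lambda, \sigma_c)$ is injective if and only if its restriction to the diagonal $C_0(\Gg_\Lambda^{(0)})$ is injective. Under Corollary~\ref{cor:special groupoid algebra isomorphism}, $1_{Z(\lambda)}$ corresponds to $s_\lambda s_\lambda^*$, so the diagonal is $\clsp\{s_\mu s_\mu^* : \mu \in \Lambda\}$. The hypothesis $t_v \neq 0$ together with $t_\mu^* t_\mu = t_{s(\mu)}$ from~(CK3) forces $t_\mu t_\mu^* \neq 0$ for each $\mu$; mutual orthogonality of $\{t_\mu t_\mu^* : \mu \in \Lambda^n v\}$ from Remark~\ref{rmk:orth range proj}, combined with relation~(CK4), then yields injectivity of $\pi_t$ on the diagonal.

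For the simplicity assertion, I would invoke Renault's criterion that $C^*(\Gg, \sigma)$ is simple precisely when $\Gg$ is both topologically principal and minimal. The ``if'' direction is then immediate from Step~1. For the converse, if $\Lambda$ fails to be cofinal then there is a proper nonempty open $\Gg_\Lambda$-invariant subset $U \subseteq \Lambda^\infty$, and the twisted $C^*$-algebra of the reduction $\Gg_\Lambda|_U$ embeds as a proper nontrivial ideal in $C^*(\Gg_\Lambda, \sigma_c) \cong C^*(\Lambda, c)$. The main obstacle will be invoking the twisted forms of Renault's uniqueness and simplicity theorems: \cite{Renault1980} phrases twists as $\TT$-groupoid extensions rather than $2$-cocycles, but by the groupoid analogue of Theorem~\ref{thm:H2=Ext} and \cite[Proposition~I.1.14]{Renault1980}, $2$-cocycles correspond to such extensions, so Renault's theorems apply after this translation; checking that the cocycle does not disturb the canonical conditional expectation or the orbit structure underlying his arguments is routine but is where the remaining bookkeeping sits.
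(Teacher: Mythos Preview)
Your proposal is correct and follows essentially the same route as the paper: transport to $C^*(\Gg_\Lambda,\sigma_c)$ via Corollary~\ref{cor:groupoid algebra isomorphism}, use Remark~\ref{rmk:aperiodicity} and \cite[Proposition~4.5]{KP2000} to get topological freeness, and then invoke \cite[Proposition~II.4.6]{Renault1980} together with the arguments of \cite[Theorem~4.6, Proposition~4.8]{KP2000}. Your worry about translating between $2$-cocycles and $\TT$-groupoid extensions is unnecessary here, since Renault's Proposition~II.4.6 is already stated for twisted groupoid $C^*$-algebras defined via continuous $2$-cocycles, exactly as $\sigma_c$ is constructed in Lemma~\ref{lem:groupoid-2-cocycle}.
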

\begin{proof}
By Remark~\ref{rmk:aperiodicity}, $\Lambda$ satisfies Condition~(A). Hence
\cite[Proposition~4.5]{KP2000} implies that $\Gg_\Lambda$ is topologically free in the sense
that the units with trivial isotropy are dense in $\Gg_\Lambda^{(0)}$. Since $C^*(\Lambda, c)
\cong C^*(\Gg_\Lambda, \sigma_c)$, the result now follows from
\cite[Proposition~II.4.6]{Renault1980} and the arguments of \cite[Theorem~4.6 and
Proposition~4.8]{KP2000}.
\end{proof}

\begin{rmk}
Let $\Lambda$ be a row-finite $k$-graph with no sources. Combining
Remark~\ref{rmk:aperiodicity} with \cite[Theorem~3.1]{RobertsonSims:blms07} shows that the
untwisted $C^*$-algebra $C^*(\Lambda)$ is simple if \emph{and only if} $\Lambda$ is both
aperiodic and cofinal. This is not the case in general for twisted $k$-graph algebras:
\cite[Example~7.7]{kps3} shows how to recover the irrational rotation algebras, which are
simple, as twisted $C^*$-algebras of a $2$-graph which fails the aperiodicity condition quite
spectacularly. So in general, simplicity of the untwisted $C^*$-algebra $C^*(\Lambda)$ implies
simplicity of each $C^*(\Lambda, c)$ but the converse does not hold.
\end{rmk}

We show next that each $C^*(\Lambda, c)$ is nuclear. Our argument follows closely that of
\cite[Theorem~5.5]{KP2000}. We first establish a technical result.

\begin{lem}[{cf. \cite[Lemma~5.4]{KP2000}}]\label{lem:d-coboundary}
Let $\Lambda$ be a row-finite $k$-graph with no sources, and suppose that the degree map on
$\Lambda$ is a coboundary. For each $c \in \Zcat2(\Lambda, \TT)$, the twisted $C^*$-algebra
$C^*(\Lambda, c)$ is AF, and is isomorphic to $C^*(\Lambda)$.
\end{lem}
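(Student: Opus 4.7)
My approach exploits the structural rigidity imposed by the hypothesis $d = \dcat{0} b$ (for some $b: \Lambda^0 \to \ZZ^k$), which gives $d(\lambda) = b(s(\lambda)) - b(r(\lambda))$ for every $\lambda$. Two immediate consequences are: (a) $\Lambda$ has no non-trivial loops (since $r(\lambda) = s(\lambda)$ forces $d(\lambda) = 0$); and (b) every set $v\Lambda u$ is contained in $v\Lambda^{b(u)-b(v)}$, hence is finite by row-finiteness.

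To establish AF-ness, I would realise $C^*(\Lambda, c)$ as a direct limit of AF corners. Exhaust $\Lambda^0$ by finite sets $H_1 \subseteq H_2 \subseteq \cdots$, let $p_i := \sum_{v \in H_i} s_v$, and note that
\[
F_i := p_i C^*(\Lambda, c) p_i = \clsp\{s_\mu s_\nu^* : r(\mu), r(\nu) \in H_i,\ s(\mu) = s(\nu)\}
\]
is a nested sequence of $C^*$-subalgebras whose closed union is $C^*(\Lambda, c)$ (each generator $s_\lambda = s_\lambda s_{s(\lambda)}^*$ lies in $F_i$ as soon as $H_i \supseteq \{r(\lambda), s(\lambda)\}$).

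The core step is to show each $F_i$ is AF. I would filter $F_i$ by source vertex along an exhaustion $V_1 \subseteq V_2 \subseteq \cdots$ of $\Lambda^0$ chosen inductively so that every $V_n$ is closed under taking sources of minimal common extensions $\nu_1\alpha = \mu_2\beta$ coming from paths in $H_i \Lambda V_{n-1}$; this closure remains finite because each $\MCE$ set is finite (row-finiteness) and the pool of candidate paths $H_i\Lambda V_{n-1}$ is itself finite by (b). The subspace $F_i^{V_n} := \clsp\{s_\mu s_\nu^* \in F_i : s(\mu) \in V_n\}$ is then a finite-dimensional $C^*$-subalgebra—its spanning set is finite by (b), and the MCE-closure of $V_n$ together with the product formula in Lemma \ref{lem:spanning} guarantees multiplicative closure—and the $F_i^{V_n}$ exhaust $F_i$. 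Hence $F_i$, and consequently $C^*(\Lambda, c) = \overline{\bigcup_i F_i}$, is AF.

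For the identification with $C^*(\Lambda)$, the Bratteli diagram produced by this filtration is described by purely combinatorial data: the dimensions are controlled by the finite cardinalities $|v\Lambda u|$, and the inclusion multiplicities $F_i^{V_n} \hookrightarrow F_i^{V_{n+1}}$ are governed by the refinement identity $s_u = \sum_{\tau \in u\Lambda w} s_\tau s_\tau^*$ of (CK4). Neither ingredient depends on the cocycle $c$, so $C^*(\Lambda, c)$ and $C^*(\Lambda) = C^*(\Lambda, 1)$ yield the same Bratteli diagram and are isomorphic as AF algebras by Elliott's classification. The main obstacle I anticipate is the careful inductive construction of the filtration $V_n$ and the verification that $F_i^{V_n}$ is genuinely closed under multiplication; this amounts to tracking the source vertex $s(\alpha)$ of each minimal common extension appearing in Lemma \ref{lem:spanning} and showing that iterating the MCE-closure operation on a finite set keeps the result finite at every step, which is precisely where the coboundary hypothesis (through the finiteness of each individual $v\Lambda u$) pays off.
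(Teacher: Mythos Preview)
Your overall strategy---build a filtration by finite-dimensional subalgebras whose Bratteli data is visibly independent of $c$---matches the paper's, and the cocycle-independence argument for the isomorphism with $C^*(\Lambda)$ is the same. But your filtration is more complicated than needed, and the step you flag as the main obstacle is not fully justified.

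The paper filters $C^*(\Lambda,c)$ directly by the $b$-value of the \emph{source} vertex: for $n \in \ZZ^k$ set $A_n := \clsp\{s_\mu s_\nu^* : b(s(\mu)) = n\}$. If $b(s(\nu)) = b(s(\eta)) = n$ and $r(\nu) = r(\eta)$, the coboundary hypothesis forces $d(\nu) = d(\eta)$, so $s_\nu^* s_\eta = \delta_{\nu,\eta}\, s_{s(\nu)}$ and no nontrivial MCE ever appears. Hence $A_n \cong \bigoplus_{b(v)=n} \Kk_{\Lambda v}$, relation~(CK4) gives $A_m \subseteq A_n$ for $m \le n$, and the partial-inclusion multiplicities are $|v\Lambda^{n-m} w|$. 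No corners, no MCE-closure.

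Your route allows mixed $b$-values inside each $V_n$, which is precisely why MCEs intrude. Your stated reason that ``each $v\Lambda u$ is finite'' only shows that a \emph{single} application of the closure operation adds finitely many vertices; it does not show the iteration stabilises in a finite set, which is what you need for $F_i^{V_n}$ to be a genuine subalgebra. The missing ingredient is that the source of any MCE of $\nu,\eta$ with common range has $b$-value exactly $b(s(\nu)) \vee b(s(\eta))$, so the coordinatewise supremum of $b$ over $V_n$ does not increase under closure, bounding the closure inside the finite set $\{u : b(u) \le M,\ H_i\Lambda u \ne \emptyset\}$. Once you see this your filtration works---but the observation also tells you to take $V_n$ to be a level set of $b$, at which point you recover (the corner of) the paper's much cleaner argument, and the preliminary cut-down to corners $F_i$ becomes unnecessary.
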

\begin{proof}
Since $d$ is a coboundary, there exists $b \in \Ccat0(\Lambda, \ZZ^k)$ such that $d(\lambda) =
(\dcat0b)(\lambda) = b(s(\lambda)) - b(r(\lambda))$ for all $\lambda \in \Lambda$.

Fix $c \in \Zcat2(\Lambda, \TT)$. For $n \in \ZZ^k$, let
\[
A_n := \clsp\{s_\mu s^*_\nu : (\mu,\nu) \in \Lambda
\mathbin{{_s*_s}} \Lambda, b(s(\mu)) = n\} \subseteq C^*(\Lambda, c),
\]
and for $v \in \Lambda^0$ with $b(v) = n$, let $ A_{n,v} := \clsp\{s_\mu s^*_\nu : s(\mu) = s(\nu)
= v\} \subseteq A_n. $ Arguing as in the proof of \cite[Lemma~5.4]{KP2000} we see: that $A_m
\subseteq A_n$ for $m \le n \in \ZZ^k$; that $C^*(\Lambda, c) = \varinjlim_{n \in \ZZ^k} A_n$; that
$A_n = \bigoplus_{b(v) = n} A_{n,v}$ for each $n$; and that $s_\mu s^*_\nu \mapsto
\theta_{\mu,\nu}$ determines isomorphisms $A_{n,v} \cong \Kk_{\Lambda v}$ for each $n,v$. So
$C^*(\Lambda, c)$ is AF.

To calculate the multiplicities of the partial inclusions $A_{m,v} \to A_{n,w}$, fix $m \le n$ and
$v \in b^{-1}(m)$, and observe that if $s(\mu) = v$ then
\[
s_\mu s^*_\mu
    = \sum_{\alpha \in v\Lambda^{n-m}} s_{\mu} s_\alpha s^*_\alpha s^*_\mu
    = \sum_{b(w) = n} \sum_{\alpha \in v\Lambda w} s_{\mu\alpha} s^*_{\mu\alpha}.
\]
So for $w \in b^{-1}(n)$, the multiplicity of the partial inclusion of $A_{m,v}$ in $A_{n,w}$ is
$|v \Lambda w|$ and in particular does not depend on the cocycle $c$. Since AF algebras are
completely determined by the dimensions of the summands of the approximating subalgebras and by the
multiplicities of the partial inclusions, the isomorphism class of  $C^*(\Lambda, c)$  is
independent of $c$. In particular, $C^*(\Lambda, c) \cong C^*(\Lambda, 1) = C^*(\Lambda)$. 
\end{proof}

Suppose that $\Lambda$ is a $k$-graph, $A$ is a discrete abelian group, and $f : \Lambda \to A$
is a functor. The \emph{skew-product} $k$-graph $\Lambda \times_f A$ is the cartesian product
$\Lambda \times A$ with operations $r(\mu, a) = (r(\mu), a)$, $s(\mu, a) = (s(\mu), a +
f(\mu))$, $(\mu, a)(\nu, a + f(\mu)) = (\mu\nu, a)$ and $d(\mu, a) = d(\mu)$ (see
\cite[Definition~5.1]{KP2000}).

\begin{lem}[cf. {\cite[Corollary~5.3]{KP2000}}]\label{lem:coaction}
Let $\Lambda$ be a row-finite $k$-graph and let $c \in \Zcat2(\Lambda, \TT)$. Let $A$ be a discrete abelian group and $f : \Lambda \to A$ a functor. There is a strongly continuous action
$\alpha^f$ of $\widehat{A}$ on $C^*(\Lambda, c)$ such that $\alpha^f_\chi(s_\lambda) =
\chi(f(\lambda)) s_\lambda$ for all $\chi \in \widehat{A}$ and $\lambda \in \Lambda$. There is a
cocycle $\tilde{c} \in \Zcat2(\Lambda \times_f A, \TT)$ given by $\tilde{c}((\mu,a),(\nu, a +
f(\mu)) = c(\mu, \nu)$, and $C^*(\Lambda \times_f A, \tilde{c})$ is isomorphic to the
crossed-product $C^*(\Lambda, c) \rtimes_{\alpha^f} \widehat{A}$.
\end{lem}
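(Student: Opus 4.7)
The proof consists of three components: constructing $\alpha^f$, verifying that $\tilde c$ is a categorical $2$-cocycle on $\Lambda \times_f A$, and establishing the crossed-product isomorphism.

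For $\alpha^f$, since $f$ is a functor and each $\chi \in \widehat A$ is a group homomorphism, the composition $\chi \circ f : \Lambda \to \TT$ is itself a functor. Thus the elements $t^\chi_\lambda := \chi(f(\lambda)) s_\lambda$ form a Cuntz-Krieger $(\Lambda, c)$-family: (CK1), (CK3), (CK4) are preserved under unimodular rescaling, while for (CK2) the multiplicativity of $\chi \circ f$ gives $t^\chi_\mu t^\chi_\nu = \chi(f(\mu\nu)) c(\mu,\nu) s_{\mu\nu} = c(\mu,\nu) t^\chi_{\mu\nu}$. The universal property yields endomorphisms $\alpha^f_\chi$, which are automorphisms with inverses $\alpha^f_{\chi^{-1}}$, and an $\varepsilon/3$-argument on $\lsp\{s_\mu s_\nu^*\}$ (Lemma~\ref{lem:spanning}) gives strong continuity. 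That $\tilde c \in \Zcat2(\Lambda\times_f A, \TT)$ is immediate: the first-coordinate projection $\pi_1 : \Lambda \times_f A \to \Lambda$ is a $k$-graph functor and $\tilde c = \pi_1^* c$ is a pullback, so the cocycle identity and normalisation transfer directly.

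For the main statement I would construct a Cuntz-Krieger $(\Lambda \times_f A, \tilde c)$-family inside $C^*(\Lambda, c) \rtimes_{\alpha^f} \widehat A$ and apply the gauge-invariant uniqueness theorem (Corollary~\ref{cor:giut}). Since $A$ is countable discrete, $\widehat A$ is compact abelian and $C^*(\widehat A) \cong c_0(A)$, so the canonical unitary representation $U : \widehat A \to M(C^*(\Lambda, c) \rtimes_{\alpha^f} \widehat A)$ produces mutually orthogonal projections $P_a := \int_{\widehat A} \chi(a) U_\chi\, d\chi$ in the crossed product summing strictly to the identity. A direct computation from $U_\chi s_\lambda U_\chi^* = \chi(f(\lambda)) s_\lambda$ yields the crucial commutation $P_a s_\lambda = s_\lambda P_{a + f(\lambda)}$. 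Setting $T_{(\lambda, a)} := P_a s_\lambda$, the relations (CK1), (CK3), (CK4) follow from routine calculations using this identity and the relations for $s_\lambda$, while (CK2) reduces to
\[
T_{(\mu, a)} T_{(\nu, a + f(\mu))} = P_a s_\mu P_{a + f(\mu)} s_\nu = P_a s_\mu s_\nu = c(\mu, \nu) P_a s_{\mu\nu} = c(\mu, \nu) T_{(\mu\nu, a)},
\]
using $s_\mu P_{a+f(\mu)} = P_a s_\mu$ at the second step.

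Universality then produces a homomorphism $\rho : C^*(\Lambda \times_f A, \tilde c) \to C^*(\Lambda, c) \rtimes_{\alpha^f} \widehat A$ with $\rho(S_{(\lambda, a)}) = T_{(\lambda, a)}$. Surjectivity follows because $\rho(S_{(\mu, a)} S_{(\nu, b)}^*) = s_\mu s_\nu^* P_b$ (whenever $a + f(\mu) = b + f(\nu)$), and $\clsp\{s_\mu s_\nu^* P_b\}$ is the standard dense subalgebra coming from $C^*(\Lambda, c) \cdot c_0(A)$. For injectivity I would apply Corollary~\ref{cor:giut}: the gauge action $\gamma$ on $C^*(\Lambda, c)$ commutes with $\alpha^f$ (both rescale each $s_\lambda$ by a scalar), so it lifts to a $\TT^k$-action $\tilde\gamma$ on the crossed product that fixes each $U_\chi$; since $\tilde\gamma_z(T_{(\lambda, a)}) = z^{d(\lambda)} T_{(\lambda, a)} = z^{d(\lambda, a)} T_{(\lambda, a)}$, this intertwines with the skew-product gauge action through $\rho$, and each $T_{(v, a)} = p_v P_a$ is nonzero because $p_v \neq 0$ by Corollary~\ref{cor:gens nonzero}, $P_a$ is a nontrivial spectral projection, and they commute as $f(v) = 0$. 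The main obstacle is really just keeping track of the Fourier-convention signs for the $P_a$; once the commutation $P_a s_\lambda = s_\lambda P_{a+f(\lambda)}$ is pinned down, everything else is a formal verification.
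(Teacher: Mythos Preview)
Your argument is correct and takes a genuinely different route from the paper. The paper establishes the isomorphism via the groupoid model: it chooses a partition $\Pp$ of $\Gg_\Lambda$, builds the corresponding partition $\Qq$ of $\Gg_{\Lambda\times_f A}$, observes that the isomorphism $\Gg_{\Lambda\times_f A}\cong\Gg_\Lambda(\overline f)$ of \cite[Theorem~5.2]{KP2000} carries $\sigma_{\tilde c}$ to the pullback $\sigma_c\circ q$, and then invokes Renault's \cite[Theorem~II.5.7]{Renault1980} to identify the twisted skew-product groupoid algebra with the crossed product. Your approach instead stays entirely inside the universal-property framework: you build an explicit Cuntz--Krieger $(\Lambda\times_f A,\tilde c)$-family $T_{(\lambda,a)}=P_a s_\lambda$ in the crossed product and appeal to Corollary~\ref{cor:giut}. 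This is more elementary---it avoids both the groupoid identifications and Renault's crossed-product theorem---and gives a concrete formula for the isomorphism. The paper's route, by contrast, packages the bookkeeping into the groupoid picture already developed in Section~6, which is efficient given that machinery but less transparent about where the generators go.

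One point deserves a little more care. Your justification that $T_{(v,a)}=p_v P_a\neq 0$ (``$p_v\neq 0$, $P_a$ is nontrivial, and they commute'') is not sufficient as stated: commuting nonzero projections can certainly have zero product. A clean fix is to use the dual $A$-action $\hat\alpha$ on the crossed product, which fixes $p_v\in C^*(\Lambda,c)$ and permutes the $P_a$ transitively; hence if $p_vP_a=0$ for one $a$ then $p_vP_{a'}=\hat\alpha_{a'-a}(p_vP_a)=0$ for all $a'$, forcing $p_v=p_v\sum_{a'}P_{a'}=0$, a contradiction. Alternatively one can argue directly in the regular representation on $\ell^2(A)\otimes H$. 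With this patched, your proof goes through.
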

\begin{proof}
Our proof follows that of \cite[Corollary~5.3]{KP2000} except that we must take into account the
cocycles $c$ and $\tilde{c}$.

The existence of the action $\alpha^f$ follows from the universal property of $C^*(\Lambda, c)$:
for each $\chi \in \widehat{A}$, the map $t : \lambda \mapsto \chi(f(\lambda)) s_\lambda$
determines a Cuntz-Krieger $(\Lambda, c)$-family in $C^*(\Lambda, c)$. Continuity follows from an
$\varepsilon/3$-argument.

The map $\tilde{c}$ is a 2-cocycle because $c$ is a $2$-cocycle and $(\mu,a) \mapsto \mu$ is a
functor.

Let $\Pp$ be a subset of $\Lambda \mathbin{{_s*_s}}
\Lambda$ containing $\{(\lambda, s(\lambda)) : \lambda \in \Lambda\}$ and such that $\{Z(\mu,\nu) :
(\mu,\nu) \in \Pp\}$ is a partition of $\Gg_\Lambda$ as in Lemma~\ref{lem:twist-section}. Then $\Qq
:= \{((\mu, a + f(\mu)), (\nu, a + f(\nu))) : (\mu,\nu) \in \Pp, a \in A\}$ gives a partition of
$\Gg_{\Lambda \times_f A}$ with the same properties. Since $f$ is a functor there is a well-defined 
$1$-cocycle $\overline{f}$ on $\Gg_\Lambda$ given by 
$\overline{f}(\alpha x, d(\alpha) - d(\beta), \beta x) = f(\alpha) - f(\beta)$. 
Let $\Gg_\Lambda(\overline{f})$ be  the skew-product groupoid of \cite{Renault1980}, and
let $\sigma_{\tilde{c}}$ be the cocycle on $\Gg_{\Lambda \times_f A}$ obtained from
Lemma~\ref{lem:groupoid-2-cocycle} applied to $\tilde{c} \in \Zcat2(\Lambda \times_f A, \TT)$ and
$\Qq$. Let $\sigma_c$ be the cocycle on $\Gg_\Lambda$ obtained in the same way from $c \in
\Zcat2(\Lambda, \TT)$ and $\Pp$. If $q$ denotes the quotient map $\Gg_\Lambda(\overline{f}) \to
\Gg_\Lambda$ then $\sigma_c \circ q$ is a continuous $2$-cocycle on $\Gg_\Lambda(\overline{f})$. By
\cite[Theorem~5.2]{KP2000}, the groupoid $\Gg_{\Lambda \times_f A}$ is isomorphic to
$\Gg_\Lambda(\overline{f})$. Moreover, this isomorphism carries $\sigma_{\tilde{c}}$ to $\sigma_c
\circ q$. We can now apply \cite[Theorem~II.5.7]{Renault1980} as in the proof of
\cite[Corollary~5.3]{KP2000}.
\end{proof}

\begin{rmk}
Note that $\tilde{c}$ is the pull-back of $c$ under the functor $\Lambda \times_f A \to \Lambda$
(given by $(\lambda, a) \mapsto \lambda$).
\end{rmk}

\begin{cor}[cf. {\cite[Theorem~5.5]{KP2000}}]
Let $\Lambda$ be a row-finite $k$-graph and let $c \in \Zcat2(\Lambda, \TT)$. Then $C^*(\Lambda,
c)$ belongs to the bootstrap class $\mathcal{N}$, and in particular is nuclear.
\end{cor}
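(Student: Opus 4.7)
The plan is to mimic the strategy of \cite[Theorem~5.5]{KP2000} verbatim, using the twisted analogues (Lemmas \ref{lem:coaction} and \ref{lem:d-coboundary}) that we have already set up in place of the untwisted ingredients. Take the gauge action $\gamma$ of $\TT^k = \widehat{\ZZ^k}$ on $C^*(\Lambda, c)$ arising from the functor $d : \Lambda \to \ZZ^k$. Lemma~\ref{lem:coaction}, applied with $f = d$, identifies the crossed product:
\[
C^*(\Lambda, c) \rtimes_\gamma \TT^k \cong C^*(\Lambda \times_d \ZZ^k, \tilde c),
\]
where $\tilde c$ is the pullback of $c$ along $(\lambda,n) \mapsto \lambda$.

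Next I would check that the degree functor on the skew-product $\Lambda \times_d \ZZ^k$ is a categorical coboundary: the function $b : (\Lambda \times_d \ZZ^k)^0 \to \ZZ^k$ given by $b(v,n) = n$ satisfies $(\dcat0 b)(\lambda,n) = b(s(\lambda,n)) - b(r(\lambda,n)) = (n + d(\lambda)) - n = d(\lambda,n)$. Lemma~\ref{lem:d-coboundary} therefore applies, giving that $C^*(\Lambda \times_d \ZZ^k, \tilde c)$ is AF. Consequently $C^*(\Lambda, c) \rtimes_\gamma \TT^k$ is AF, hence nuclear and a member of the bootstrap class $\mathcal{N}$.

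Now I would invoke Takai duality for the abelian group $\TT^k$:
\[
\bigl(C^*(\Lambda, c) \rtimes_\gamma \TT^k\bigr) \rtimes_{\widehat\gamma} \ZZ^k
    \cong C^*(\Lambda, c) \otimes \Kk\bigl(L^2(\TT^k)\bigr).
\]
Since $\ZZ^k$ is amenable, the crossed product on the left preserves both nuclearity and membership in $\mathcal{N}$ (as $\mathcal{N}$ is closed under crossed products by $\ZZ^k$). Because $\mathcal{N}$ and nuclearity are stable under Morita equivalence, it follows that $C^*(\Lambda, c) \in \mathcal{N}$ and that $C^*(\Lambda, c)$ is nuclear.

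I do not anticipate any genuine obstacles here: every ingredient is already in place, and the only care needed is to verify that the cocycle $\tilde c$ produced by Lemma~\ref{lem:coaction} is compatible with applying Lemma~\ref{lem:d-coboundary} to the skew product, which is a matter of writing down the section $b(v,n) = n$. The rest is the standard KP2000 argument, with the only non-trivial substitution being the appeal to our twisted Lemma~\ref{lem:d-coboundary} in place of the untwisted AF-ness of $C^*(\Lambda \times_d \ZZ^k)$.
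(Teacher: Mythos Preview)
Your proposal is correct and follows essentially the same approach as the paper: apply Lemma~\ref{lem:coaction} with $f=d$ to identify $C^*(\Lambda,c)\rtimes_\gamma\TT^k$ with $C^*(\Lambda\times_d\ZZ^k,\tilde c)$, use the section $b(v,n)=n$ together with Lemma~\ref{lem:d-coboundary} to see this is AF, and then conclude via Takai duality and the closure of $\mathcal{N}$ under $\ZZ^k$-crossed products and Morita equivalence. The paper's proof is the same argument, only slightly more terse.
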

\begin{proof}
We follow the proof of \cite[Theorem~5.5]{KP2000}. By Takai duality, we have
\[
C^*(\Lambda, c) \sim_{Me} C^*(\Lambda, c) \times_\gamma \TT^k \times_{\hat\gamma} \ZZ^k.
\]
Lemma~\ref{lem:coaction} implies that $C^*(\Lambda, c) \times_\gamma \TT^k \cong C^*(\Lambda
\times_d \ZZ^k, \tilde{c})$. Define $b : (\Lambda \times_d \ZZ^k)^0 \to \ZZ^k$ by $b(v,m) = m$.
Then the degree map on $\Lambda \times_d \ZZ^k$ is equal to $\dcat0 b$, so
Lemma~\ref{lem:d-coboundary} implies that $C^*(\Lambda, c) \times_\gamma \TT^k$ is AF. Hence
$C^*(\Lambda, c)$ is Morita equivalent to a crossed product of an AF algebra by $\ZZ^k$, which
proves the result.
\end{proof}

Finally, we consider pullbacks and cartesian products of $k$-graphs. Recall from \cite{KP2000} that
if $\Lambda$ is a $k$-graph and $f : \NN^l \to \NN^k$ is a homomorphism, then the pullback
$l$-graph $f^*\Lambda$ is defined by $f^*\Lambda = \{(\lambda,m) \in \Lambda \times \NN^l :
d(\lambda) = f(m)\}$ with coordinatewise operations and degree map $d(\lambda, m) = m$. Recall also
that if $\Lambda_1$ is a $k_1$-graph and $\Lambda_2$ is a $k_2$-graph, then $\Lambda_1 \times
\Lambda_2$ is a $(k_1 + k_2)$-graph with coordinatewise operations and degree map $d(\lambda_1,
\lambda_2) = (d(\lambda_1), d(\lambda_2))$.

\begin{cor}
\begin{enumerate}
\item Let $\Lambda$ be a row-finite $k$-graph with no sources. Fix $c \in \Zcat2(\Lambda, \TT)$
    and a homomorphism $f : \NN^l \to \NN^k$. There is a cocycle $f^*c$ on $f^*\Lambda$ given
    by $f^*c(\lambda, m) = c(\lambda)$, and there is a homomorphism $\pi_f : C^*(f^*\Lambda,
    f^*c) \to C^*(\Lambda, c)$ given by $\pi_f(s_{\lambda,m}) = s_\lambda$. If $f$ is
    injective, so is $\pi_f$. If $f$ is surjective, then $\pi_f$ is also, and $C^*(f^*\Lambda,
    f^*c) \cong C^*(\Lambda, c) \otimes C(\TT^{l-k})$.
\item For each $i \in \{1,2\}$, let $\Lambda_i$ be a row-finite $k_i$-graph and fix $c_i \in
    \Zcat2(\Lambda_i, \TT)$. Then $(c_1 \times c_2)(\lambda_1, \lambda_2) :=
    c_1(\lambda_1)c_2(\lambda_2)$ determines an element $c_1 \times c_2 \in \Zcat2(\Lambda_1
    \times \Lambda_2)$. Moreover, the formula $(s_{\lambda_1}, s_{\lambda_2}) \mapsto
    s_{\lambda_1} \otimes s_{\lambda_2}$ determines an isomorphism
    \[
    C^*(\Lambda_1 \times \Lambda_2, c_1 \times c_2) \cong
    C^*(\Lambda_1, c_1) \otimes C^*(\Lambda_2, c_2).
    \]
\end{enumerate}
\end{cor}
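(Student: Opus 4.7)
The plan is to prove part~(2) first, since the universal-property argument is cleaner, and then use its groupoid analogue to handle the surjective case of part~(1).

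For part~(2), first note that $c_1 \times c_2 \in \Zcat{2}(\Lambda_1 \times \Lambda_2, \TT)$: composition in $\Lambda_1 \times \Lambda_2$ is coordinatewise, so the cocycle identity~\eqref{eq:cocyleid} for $c_1 \times c_2$ splits into independent identities for $c_1$ and $c_2$. Define $t_{(\lambda_1, \lambda_2)} := s_{\lambda_1} \otimes s_{\lambda_2}$ in $C^*(\Lambda_1, c_1) \otimes C^*(\Lambda_2, c_2)$ and check it is a Cuntz-Krieger $(\Lambda_1 \times \Lambda_2, c_1 \times c_2)$-family. Relations (CK1) and (CK3) are immediate from the tensor factors; for (CK2),
\[
t_{(\lambda_1, \lambda_2)} t_{(\mu_1, \mu_2)} = c_1(\lambda_1, \mu_1) c_2(\lambda_2, \mu_2)\, t_{(\lambda_1\mu_1, \lambda_2\mu_2)} = (c_1 \times c_2) \, t_{(\lambda_1\mu_1, \lambda_2\mu_2)};
\]
for (CK4), decompose $(v_1, v_2)(\Lambda_1 \times \Lambda_2)^{(n_1, n_2)} = v_1\Lambda_1^{n_1} \times v_2\Lambda_2^{n_2}$ and apply (CK4) in each factor. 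The universal property yields a homomorphism $\rho : C^*(\Lambda_1 \times \Lambda_2, c_1 \times c_2) \to C^*(\Lambda_1, c_1) \otimes C^*(\Lambda_2, c_2)$.

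For injectivity of $\rho$, apply Corollary~\ref{cor:giut}: the tensor-product action $\gamma^{c_1} \otimes \gamma^{c_2}$ of $\TT^{k_1+k_2}$ satisfies the required equivariance, and $\rho(s_{(v_1, v_2)}) = s_{v_1} \otimes s_{v_2} \neq 0$ by Corollary~\ref{cor:gens nonzero}. For surjectivity, observe that the product $\rho(s_{(\lambda_1, r(\lambda_2))}) \rho(s_{(s(\lambda_1), \lambda_2)}) = (s_{\lambda_1} \otimes s_{r(\lambda_2)})(s_{s(\lambda_1)} \otimes s_{\lambda_2}) = s_{\lambda_1} \otimes s_{\lambda_2}$ lies in the image (using that normalised cocycles give $s_\lambda s_{s(\lambda)} = s_\lambda$). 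Hence all elementary tensors lie in the image, whose closed span is the full tensor product.

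For part~(1), that $f^*c$ is a cocycle is immediate, since composition in $f^*\Lambda$ is composition of first coordinates. Define $t_{(\lambda, m)} := s_\lambda \in C^*(\Lambda, c)$ and verify that $t$ is a Cuntz-Krieger $(f^*\Lambda, f^*c)$-family. Relations (CK1)--(CK3) are inherited from $C^*(\Lambda, c)$, and (CK4) follows because
\[
(v, 0)(f^*\Lambda)^n = \{(\lambda, n) : \lambda \in v\Lambda^{f(n)}\}
\]
reduces the required sum to (CK4) in $C^*(\Lambda, c)$. The universal property yields $\pi_f$. For injectivity when $f$ is injective, apply Corollary~\ref{cor:giut} to $\pi_f$ viewed as mapping into $B := \pi_f(C^*(f^*\Lambda, f^*c))$. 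Extend $f$ to an injective group homomorphism $\bar f : \ZZ^l \to \ZZ^k$; its dual $\hat f : \TT^k \to \TT^l$ is a surjective continuous homomorphism. Injectivity of $f$ ensures that for $s_\lambda \in B$, the index $m$ with $d(\lambda) = f(m)$ is unique, so the $\TT^k$-gauge action restricted to $B$ satisfies $\gamma_w(s_\lambda) = w^{d(\lambda)} s_\lambda = \hat f(w)^m s_\lambda$, depending on $w$ only through $\hat f(w)$. Hence $\gamma|_B$ descends through $\hat f$ to a strongly continuous $\TT^l$-action $\beta$ on $B$ with $\beta_z(\pi_f(s_{(\lambda, m)})) = z^m \pi_f(s_{(\lambda, m)})$; since $\pi_f(s_{(v, 0)}) = s_v \neq 0$ by Corollary~\ref{cor:gens nonzero}, Corollary~\ref{cor:giut} gives injectivity.

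For the surjective case, $\pi_f$ is clearly onto, since every $\lambda \in \Lambda$ lifts to some $(\lambda, m) \in f^*\Lambda$. To identify $C^*(f^*\Lambda, f^*c) \cong C^*(\Lambda, c) \otimes C(\TT^{l-k})$, the plan is to use Corollary~\ref{cor:groupoid algebra isomorphism}: split $\bar f$ (using projectivity of $\ZZ^k$) to get $\ZZ^l \cong \ZZ^k \oplus \ker \bar f$, and show that the induced surjection $\Gg_{f^*\Lambda} \to \Gg_\Lambda$ has kernel $\ker \bar f \cong \ZZ^{l-k}$ and splits, yielding a topological-groupoid isomorphism $\Gg_{f^*\Lambda} \cong \Gg_\Lambda \times \ZZ^{l-k}$ under which $\sigma_{f^*c}$ is cohomologous to $\sigma_c \otimes 0$. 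Then Corollary~\ref{cor:groupoid algebra isomorphism} gives $C^*(f^*\Lambda, f^*c) \cong C^*(\Gg_\Lambda \times \ZZ^{l-k}, \sigma_c \otimes 0) \cong C^*(\Lambda, c) \otimes C(\TT^{l-k})$. The main obstacle lies here: in general $f : \NN^l \to \NN^k$ admits no monoid-level section, so one cannot directly identify $f^*\Lambda$ with a Cartesian-product $l$-graph and invoke part~(2). The argument must therefore proceed at the groupoid level (or via the coaction argument of Lemma~\ref{lem:coaction} using the projection functor $g : f^*\Lambda \to \ker \bar f$), and one must verify that the cocycle decomposition $\sigma_{f^*c} \sim \sigma_c \otimes 0$ holds independently of the choice of splitting of $\bar f$, exploiting the fact that $f^*c$ depends only on the first coordinate of elements of $f^*\Lambda$.
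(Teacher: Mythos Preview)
Your proposal is correct and follows essentially the same route as the paper, which simply cites \cite[Proposition~1.11 and Corollary~3.5(iii),(iv)]{KP2000} and leaves the reader to insert the cocycle; you have filled in those details, using the universal property plus Corollary~\ref{cor:giut} for part~(2) and for the injective case of part~(1), and the groupoid identification $\Gg_{f^*\Lambda}\cong\Gg_\Lambda\times\ZZ^{l-k}$ for the surjective case. Your explicit acknowledgment that the surjective case cannot be reduced to part~(2) at the $k$-graph level (for want of a monoid section of $f$) and must instead go through the groupoid or coaction picture, together with the check that $\sigma_{f^*c}$ is cohomologous to $\sigma_c\otimes 0$, is exactly the ``more or less'' in the paper's one-line proof.
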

\begin{proof}
The arguments are more or less identical to those of \cite[Proposition~1.11]{KP2000} and
\cite[Corollary~3.5(iii)~and~(iv)]{KP2000}.
\end{proof}

\end{document}